\newcommand\reallywidehat[1]{%
\savestack{\tmpbox}{\stretchto{%
  \scaleto{%
    \scalerel*[\widthof{\ensuremath{#1}}]{\kern-.6pt\bigwedge\kern-.6pt}%
    {\rule[-\textheight/2]{1ex}{\textheight}}%WIDTH-LIMITED BIG WEDGE
  }{\textheight}% 
}{0.5ex}}%
\stackon[1pt]{#1}{\tmpbox}%
}
\numberwithin{equation}{section}
\newcommand{\N}{\mathbb N}
\newcommand{\R}{\mathbb R}
\def\E{\mathbb E}
\def\P{\mathbb P}
\newcommand{\linf}{L^{\infty}}
\newcommand{\sF}{\mathcal{F}}
\newcommand{\bP}{\mathbb{P}}
\newcommand{\sL}{\mathcal{L}}
\newcommand{\bE}{\mathbb{E}}
\newcommand{\sG}{\mathcal{G}}
\newcommand{\sP}{\mathcal{P}}
\newcommand{\bx}{\bm{x}}
\newcommand{\bX}{\boldsymbol X}
\def\XXint#1#2#3{{\setbox0=\hbox{$#1{#2#3}{\int}$}
\vcenter{\hbox{$#2#3$}}\kern-.5\wd0}}
\newcommand{\T}{\mathbb{T}}
\numberwithin{equation}{section}
\newtheorem{thm}{Theorem}[section]
\newtheorem{lem}[thm]{Lemma}
\newtheorem{prop}[thm]{Proposition}
\newtheorem{assumption}[thm]{Assumption}
\theoremstyle{definition}
\newtheorem{rmk}[thm]{Remark}
\def\smallnegint{\mathop{\int\mkern-13mu
        \raise.5ex\hbox{${\scriptscriptstyle\diagup}$}}\nolimits}
\def\ds{\displaystyle}
\def\tr{\operatorname{tr}}
\def\bx{{\boldsymbol x}}
\def\by{{\boldsymbol y}}
\def\ssetminus{\,\raise.4ex\hbox{$\scriptstyle\setminus$}\,}
\newcommand{\be}{\begin{equation}}
\newcommand{\ee}{\end{equation}}
\newcommand{\bc}{\begin{case}}
\newcommand{\ec}{\end{cases}}
\newcommand{\bs}{\begin{split}}
\newcommand{\es}{\end{split}}
\newcommand{\bm}[1]{\boldsymbol #1}
\renewcommand{\d}{d}
\newcommand{\vs}{\vskip.075in}
\newcommand{\ov}{\overline}
\renewcommand{\tilde}{\widetilde}
\renewcommand{\hat}{\widehat}
\def \be {\begin{equation}}
\def \ee {\end{equation}}
\def \E {\mathbb{E}}
\def \P {\mathbb{P}}
\def \R {\mathbb{R}}
\renewcommand{\tilde}{\widetilde}
\newcommand{\pr}{\mathcal{P}}
\newcommand{\cC}{\mathcal{C}}
\newcommand{\cU}{\mathcal{U}}
\newcommand{\bbF}{\mathbb{F}}
\newcommand{\trip}[1]{{\left\vert\kern-0.25ex\left\vert\kern-0.25ex\left\vert #1 
    \right\vert\kern-0.25ex\right\vert\kern-0.25ex\right\vert}}
\newcommand{\bY}{\bm Y}
\newcommand{\cO}{\mathcal{O}}
\newcommand{\cT}{\mathcal{T}}
\newcommand{\bW}{\bm W}
\newcommand{\dtwo}{\mathbf{d}_2}
\newcommand{\dpee}{\mathbf{d}_p}
\newcommand{\bZ}{\mathbf{Z}}
\newcommand{\bd}{\mathbf{d}}
\colorlet{RED}{red}
\newcommand{\cR}{\mathcal{R}}
\newcommand{\intd}{\int_{\R^d}}
\newcommand{\cF}{\mathcal{F}}
\newcommand{\cG}{\mathcal{G}}
\newcommand{\pt}{\partial_t}
\newcommand{\cV}{\mathcal{V}}
\newcommand{\cQ}{\mathcal{Q}}
\newcommand{\dive}{\text{div}}
\newcommand{\cP}{\mathcal{P}}
\newcommand{\cK}{\mathcal{K}}
\thanks{P.~C. was partially supported by P.~E.~S's Air Force Office for Scientific Research grant FA9550-18-1-0494 and by the Agence Nationale de la Recherche (ANR), project ANR-22-CE40-0010 COSS. J.~J. was supported by the by the National Science Foundation  grant  DMS2302703. P.~E.~S. was partially  supported by the National Science Foundation grant DMS-1900599, the Office for Naval Research grant N000141712095 and the Air Force Office for Scientific Research grant FA9550-18-1-0494. }
\title[Sharp convergence rates for mean field control]{Sharp convergence rates for mean field control in the region of strong regularity}
\author[P. Cardaliaguet, J. Jackson, N. Mimikos-Stamatopoulos, and P. E. Souganidis]{Pierre Cardaliaguet, Joe Jackson, Nikiforos Mimikos-Stamatopoulos, \\
and Panagiotis E. Souganidis} 
\address{Universit\'e Paris-Dauphine, PSL Research University, Ceremade, 
Place du Mar\'echal de Lattre de Tassigny, 75775 Paris cedex 16 - France}
\email{cardaliaguet@ceremade.dauphine.fr }
\address{Department of Mathematics, University of Chicago, Chicago, Illinois 60637, USA}
\email{jsjackson@math.uchicago.edu}\address{Department of Mathematics, University of Chicago, Chicago, Illinois 60637, USA}
\email{nmimikos@math.uchicago.edu}
\address{Department of Mathematics, University of Chicago, Chicago, Illinois 60637, USA}
\email{souganidis@math.uchicago.edu}
\begin{document}

\begin{abstract}
We study the convergence problem for mean field control, also known as optimal control of McKean-Vlasov dynamics. We assume that the data is smooth but not convex, and thus the limiting value function $\cU:[0,T] \times \cP_2(\R^d) \to \R$ is Lipschitz, but may not be differentiable. In this setting, the first and last named authors recently identified an open and dense subset $\cO$ of $[0,T] \times \cP_2(\R^d)$ on which $\cU$ is $\cC^1$ and solves the relevant infinite-dimensional Hamilton-Jacobi equation in a classical sense. In the present paper, we use these regularity results, and some non-trivial extensions of them, to derive sharp rates of convergence. In particular, we show that the value functions for the $N$-particle control problems converge towards $\cU$ with a rate of $1/N$, uniformly on subsets of $\cO$ which are compact in the $p$-Wasserstein space for some $p > 2$. A similar result is also established at the level of the optimal feedback controls. The rate $1/N$ is the optimal rate in this setting even if $\cU$ is smooth, while, in general,  the optimal global rate of convergence is known to be slower than $1/N$. Thus our results show that the rate of convergence is faster inside of $\cO$ than it is outside. As a consequence of the convergence of the optimal feedbacks, we obtain a concentration inequality for optimal trajectories of the $N$-particle problem started from i.i.d. initial conditions.
\end{abstract}

\maketitle

\setcounter{tocdepth}{2}
\tableofcontents

\section{Introduction}

This paper is concerned with the convergence of certain high-dimensional stochastic control problems towards their mean field limits. To define these control problems, we fix throughout the paper a dimension $d \in \N$, a time horizon $T > 0$, and a filtered probability space $(\Omega, \bbF = (\sF_t)_{0 \leq t \leq T} , \bP)$ satisfying the usual conditions and hosting independent $d$-dimensional Brownian motions $W$ and $(W^i)_{i \in \N}$. 
\vs
The data consists of nice functions 
\begin{align*}
    L : \R^d \times \R^d \to \R, \quad \sF,\, \sG : \cP_2(\R^d) \to \R, 
\end{align*}
where $\cP_2(\R^d)$ is the Wasserstein space of Borel probability measures on $\R^d$ with finite second moment. Precise assumptions on $L$, $\sF$, and $\sG$ will be introduced in Subsection \ref{subsec:assump} below. 
\vs
The $N$-particle value function $\cV^N : [0,T] \times (\R^d)^N \to \R$ is defined by the formula 
\begin{align} \label{vndef}
    \cV^N(t_0,\bx_0) = \inf_{\bm \alpha \in \mathcal A}  \E\bigg[ \int_{t_0}^T \Big(\frac{1}{N} \sum_{i = 1}^N L(X_t^i, \alpha_t^i)  + \sF(m_{\bX_t}^N) \Big) dt + \sG(m_{\bX_T}^N) \bigg],
\end{align}
where $\mathcal A$ is the set  of  square-integrable, $\bbF$-adapted, $(\R^d)^N$-valued processes $\bm \alpha = (\alpha^1,...,\alpha^N)$ defined on $[t_0,T]$, and $\bX = (X^1,...,X^N)$ is the $(\R^d)^N$-valued state process which is determined from the control $\bm \alpha$ by the dynamics 
\begin{align*}
    dX_t^i = \alpha_t^i dt + \, \sqrt{2} dW_t^i,  \ \  t_0 \leq t \leq T, \quad X_{t_0}^i = x_0^i.
\end{align*}

We recall that under mild conditions on the data (see  Assumption \ref{assump:values} below), $\cV^N$ is the unique classical solution of the Hamilton-Jacobi-Bellman equation
\begin{align} \label{eq:hjbn} \tag{$\text{HJB}_N$}
    \begin{cases}     \displaystyle
    - \partial_t \cV^N - \sum_{i = 1}^N \Delta_{x^i} \cV^N + \frac{1}{N} \sum_{i = 1}^N H(x^i, N D_{x^i} \cV^N) = \sF(m_{\bx}^N) \ \ \text{in} \ \   [0,T] \times (\R^d)^N, \vspace{.2cm} \\
    \cV^N(T,\bx) = \sG(m_{\bx}^N) \ \ \text{for} \ \  \bx \in (\R^d)^N,
    \end{cases}
\end{align}
with the Hamiltonian  $H : \R^d \times \R^d \to \R$ given by $H(x,p) = \sup_{a \in \R^d} \big\{- a \cdot p - L(x,a) \big\}$. 
\vs

Next, we define the value function  $\cU : [0,T] \times \pr_2(\R^d) \to \R$ for the corresponding mean field problem by
\begin{align} \label{udef}
    \cU(t_0,m_0) &= \inf_{(m,\alpha)} \bigg\{ \int_{t_0}^T \bigg(\int_{\R^d} L(x,\alpha(t,x)) m_t(dx) + \sF(m_t)  \bigg) dt + \sG(m_T) \bigg\}, 
\end{align}
where the infimum is taken over all pairs $(m,\alpha)$ consisting of a curve $[t_0,T] \ni t \mapsto m_t \in \cP_2(\R^d)$ and a measurable map $\alpha : [t_0,T] \times \R^d \to \R$ such that 
\begin{equation}\label{takis1}
\begin{cases}
\int_{t_0}^T  \int_{\R^d} |\alpha(t,x)|^2 m_t(dx) dt < \infty, \ \ \text{and} \  \ 
\text{the Fokker-Planck equation}\\[1.5mm]

    %\begin{align*}
       \partial_t m = \Delta m - \text{div}(m \alpha) \  \text{ in } \  [t_0,T] \times \R^d, \quad m_{t_0} = m_0\\[1.2mm]
       
   % \end{align*}
\text{    is satisfied in the sense of distributions.}
\end{cases}
\end{equation}
%\begin{enumerate}
%    \item The integrability condition $\int_{t_0}^T \int_{\R^d} |\alpha(t,x)|^2 m_t(dx) dt < \infty$ holds, and
%    \item The Fokker-Planck equation
%    \begin{align*}
%        \partial_t m = \Delta m - \text{div}(m \alpha) \text{ in } [t_0,T] \times \R^d, \quad m_{t_0} = m_0
%    \end{align*}
%    is satisfied in the sense of distributions.
%\end{enumerate}
    
We recall that $\cU$ is expected to be the unique solution, in an appropriate viscosity sense, to the Hamilton-Jacobi equation

\begin{align} \label{eq:hjbinf} \tag{$\text{HJB}_{\infty}$}
    \begin{cases} \displaystyle
    - \partial_t \cU - \int_{\R^d} \tr(D_x D_m \cU) dm + \int_{\R^d} H(x,D_m \cU) dm = \sF(m) \ \ \text{in} \ \ [0,T] \times \pr_2(\R^d), \vspace{.2cm} \\
    \cU(T,m) = \sG(m) \ \text{in} \  \pr_2(\R^d);
    \end{cases}
\end{align}
see e.g. \cite{CossoGozziKharroubiPham, Soner2022, daudinseeger, bayraktar2023, djs2023} and the references therein for various approaches to the comparison principle for viscosity solutions of \eqref{eq:hjbinf}.

\subsection{Previous convergence results}

It is by now well understood that $\cV^N$ converges to $\cU$ in the sense that 
\begin{align} \label{valueconvintro}
    \cV^N(t,\bx) \approx \cU(t,m_{\bx}^N),  \quad \text{for $N$ large}.
\end{align}
This convergence was first established in \cite{Lackercontrol}, and later extended in \cite{DjetePossamaiTan} to allow the presence of a common noise. In another direction, \cite{GangboMayorgaSwiech} and \cite{mayorgaswiech} used PDE techniques to obtain similar results in a setting with purely common noise. We refer also to the works \cite{fornasier_lisini_orrieri_savaré_2019} and \cite{CAVAGNARI2022268} for a study of the deterministic case via $\Gamma$-convergence techniques, to \cite{daudinlimits} for an extension of the methods in \cite{Lackercontrol} to problems with state constraints, and to \cite{talbitouzizhang} for a similar convergence result in the setting of mean field optimal stopping.
All the works mentioned in the preceding paragraph  use techniques based on compactness, and so obtain only qualitative versions of the statement \eqref{valueconvintro}.
\vs
More recently, there have been a number of attempts to quantify the convergence of $\cV^N$ to $\cU$. On the one hand, when $\sF$ and $\sG$ are convex and sufficiently smooth, the value function $\cU$ is smooth, and a standard argument (see the introduction of \cite{cardaliaguet2023algebraic} for a more detailed explanation) shows that $|\cV^N(t,\bx) - \cU(t,m_{\bx}^N)| \leq C/N$. On the other hand, when $\cF$ and $\cG$ are not convex, then the value function $\cU$ may fail to be $\cC^1$ even if all the data is smooth (see \cite{BrianiCardaliaguet} for an example). In this setting the optimizers for the mean field control problem may not be unique, and obtaining quantitative convergence results is much more subtle.
\vs
The first general, that is, not requiring convexity or other special structure on $\sF$ and $\sG$, quantitative version of \eqref{valueconvintro} was obtained in \cite{cardaliaguet2023algebraic}, where the authors proved the estimate 
\begin{align} \label{cdjsrateintro}
    |\cV^N(t,\bx) - \cU(t,m_{\bx}^N) | \leq C\big(1 + \frac{1}{N} \sum_{i = 1}^N |x^i|^2 \big)N^{-\beta_d}, 
\end{align}
for  $C$ depending on all of the data and the exponent $\beta_d$ depending only on $d$. We refer also to \cite{CecchinFinite} for a thorough treatment of the finite state space setting and to \cite{bayraktarregimeswitching}, which obtains a sharp rate but under a special structural condition on the data. The more recent work \cite{ddj2023} attempts to identify the optimal rate of convergence, and shows in particular that the optimal rate depends on the smoothness of the data or, more precisely, the metric with respect to which the data is regular. Theorem 2.7 of \cite{ddj2023} shows that, if the data is periodic, that is, the state space $\R^d$ is replaced by the $d$-dimensional flat-torus $\T^d$, and sufficiently smooth (with the amount of smoothness required depending on the dimension $d$),
%\footnote{Importantly, how much smoothness is required depends on the dimension $d$.}, 
\eqref{cdjsrateintro} can be improved to 
\begin{align} \label{ddjrate}
    |\cV^N(t,\bx) - \cU(t,m_{\bx}^N)| \leq CN^{-1/2}, 
\end{align}
Example 2 in \cite{ddj2023}, meanwhile, shows that this rate cannot be improved even if all of the data is $\cC^{\infty}$. In summary, we now know that when the data is smooth and convex $\cU$ is smooth and the rate is $1/N$, but when the data is not convex, $\cU$ may fail to be smooth, and in this case the global rate is at best $1/\sqrt{N}$ even if all the data is very regular. 
\vs
There have also been some efforts to understand the convergence of the optimal trajectories and the optimal controls.
% in addition to the values. 
 For example, when $\cU$ is smooth one can follow the strategy initiated in \cite{CardaliaguetDelarueLasryLions} to show that optimal trajectories of the $N$-particle control problem converge (with a rate) to optimal trajectories of the mean field problem (see \cite{germain_pham_warin_2022} for details on this approach). In the non-convex regime, such questions are much more subtle since, as mentioned already, there may not be a unique optimal trajectory for the limiting problem. The recent work \cite{CardSoug2022} overcomes this issue by identifying an open and dense subset $\cO$ of $[0,T] \times \cP_2(\R^d)$ where the value function $\cU$ is $\cC^1$ and such that optimal trajectories started from initial conditions in $\cO$ are unique. In particular, it is shown in \cite{CardSoug2022} that a quantitative propagation of chaos can be established when starting from initial conditions in $\cO$.

\subsection{Our results}

The open and dense set $\cO$ identified in \cite{CardSoug2022} plays a central role in our results. In what follows, we will call the set $\cO$ the region of strong regularity by analogy with the terminology in \cite{FlemingSouganidis}, where the same language is used to describe a region where a certain first-order Hamilton-Jacobi equation has a classical solution which can be computed via the method of characteristics; more on this analogy shortly.
\vs
Our first result states that, locally inside $\cO$, the convergence rate $1/N$ can be achieved even if the data is not convex. More precisely, we show in Theorem \ref{thm.main} that, for each set $K \subset \cO$ which is compact in $\cP_p(\R^d)$ for some $p > 2$,  with $\cP_p(\R^d)$ denoting the $p$-Wasserstein space, there is a constant $C = C(K)$ such that, for  each $N \in \N$ and each $(t,\bx) \in [0,T] \times (\R^d)^N$ such that $(t,m_{\bx}^N) \in K$, 
\begin{align} \label{intromainthm}
       |\cV^N(t,\bx) - \cU(t,m_{\bx}^N)| \leq C/N. 
\end{align}
%for each $N \in \N$ and each $(t,\bx) \in [0,T] \times (\R^d)^N$ such that $(t,m_{\bx}^N) \in K$. 
We refer to Remark \ref{rmk:lp} for a discussion of the role of compactness in $\cP_p$ with $p > 2$. Combined with Example 2 in \cite{ddj2023}, this shows that the optimal global convergence rate is different than the optimal rate of convergence within $\cO$. 
\vs
Example 2 in \cite{ddj2023} also explains why we claim that the set $\cO$ plays a similar role as the regions of strong regularity in \cite{FlemingSouganidis}. Indeed, it is explained there that when $\sF$ and $\sG$ depend on $m$ only through its mean $\ov{m}$, that is, $\sF(m) = f(\ov{m})$ and $\sG(m) = g(\ov{m})$, and $L = \frac{1}{2} |a|^2$ for simplicity, we have 
$$
\cU(t,m) = u(t,\ov{m}) \ \ \text{and} \ \   \cV^N(t,\bx) = v^N(t, \frac{1}{N} \sum_{i = 1}^N x^i), 
$$
where $u$ and $v^N$ are the solutions of the finite-dimensional PDEs
\begin{align*}
- \partial_t u + \frac{1}{2} |Du|^2 = f\ \ \text{in} \ \ [0,T)\times \R^d \ \ \text{and} \ \   u(T,x) = g,
\end{align*}
and 
\begin{align*}
    - \partial v^N - \frac{1}{N} \Delta v^N + \frac{1}{2} |Dv^N|^2 = f\ \ \text{in} \ \ [0,T)\times \R^d  \ \ \text{and} \ \  v^N(T,x) = g. 
\end{align*}
Thus, the convergence problem reduces to vanishing viscosity. Moreover, while the best global estimate for $|v^N - u|$ is $O(1/\sqrt{N})$, the expansion achieved in \cite{FlemingSouganidis} clearly shows that, locally uniformly on ``regions of strong regularity", $|v^N - u| = O(1/N)$ . Thus our Theorem \ref{thm.main} can be viewed as an infinite-dimensional (partial) analogue of the results in \cite{FlemingSouganidis}, with $\cO$ playing the role of the regions of strong regularity in \cite{FlemingSouganidis}.
\vs
Our second result shows that, when the data is smooth enough, a similar sharp rate of convergence can be obtained for the gradients. More precisely, Theorem \ref{thm.gradient} shows that, for each set $K \subset \cO$ which is compact in $\cP_p(\R^d)$ for some $p > 2$, there is a constant $C = C(K)$ such that, for each $N \in \N$ and each $(t,\bx) \in [0,T] \times (\R^d)^N$ such that $(t,m_{\bx}^N) \in K$, 
\begin{align} \label{introgradthm}
       |ND_{x^i} \cV^N(t,\bx) - D_m\cU(t,m_{\bx}^N, x^i)| \leq C/N. 
\end{align}
%for each $N \in \N$ and each $(t,\bx) \in [0,T] \times (\R^d)^N$ such that $(t,m_{\bx}^N) \in K$. 
The main interest of \eqref{introgradthm} is that it demonstrates that optimal feedbacks for the $N$-particle problem converge toward the optimal feedback for the mean field problem; see Remark \ref{rmk:feedback} for more details.

Using the strong convergence of optimal feedbacks in \eqref{introgradthm}, we obtain in Proposition \ref{prop.concentration} a concentration inequality for the optimal trajectories of the $N$-particle problems when started from appropriate i.i.d. initial conditions. This result complements the quantitative propagation of chaos results in \cite{CardSoug2022}, and can also be compared to similar concentration results for mean field games obtained in \cite{Delarue2018FromTM} under the assumption that the master equation has a smooth solution. 
\vs
Finally, we mention that in order to obtain our main convergence results, we have to sharpen in various ways the regularity results in \cite{CardSoug2022}.  In particular we show in Theorem \ref{thm.c2} that, under appropriate regularity conditions, the second Wasserstein derivative $D_{mm} \cU$ exists and is continuous in the region of strong regularity.

\subsection{Strategy of the proof}

We explain here the strategy of proof for the estimate \eqref{valueconvintro}. To avoid unnecessary technicalities related to higher moments of the relevant probability measures, we  only discuss here the periodic case, that is,  $\R^d$ is replaced by the $d$-dimensional flat torus $\T^d$.
% so that we can avoid discussing technicalities related to higher moments of the relevant probability measures. 
\vs
First, we note that, in view of the semi-concavity  but not semi-convexity estimates for $\mathcal U$,  the estimate 
\begin{align*}
    \cV^N(t,\bx) \leq \cU(t,m_{\bx}^N) + C/N
\end{align*}
in fact holds globally.
%which is related to the fact that we have semi-concavity estimates but not semi-convexity estimates for $\mathcal U$. 
\vs
To complete the proof, we need to show that the symmetric inequality holds locally uniformly in $\cO$. For each fixed  $(t_0,m_0) \in \cO$  we  work with small tubes $\cT_r(t_0,m_0)$ of radius $r$ around the optimal trajectory for the mean field control problem started from $(t_0,m_0)$; see Subsection \ref{subsec:tubes} for the precise definition of $\cT_r(t_0,m_0)$. The key result is proved in Lemma \ref{lem.key}.  It says that, when $0 < r_1 \ll r_2 \ll 1$, the probability that the empirical measure associated to the optimally controlled state process started from $(t,m_{\bx}^N) \in \cT_{r_1}(t_0,m_0)$ exits the larger tube $\cT_{r_2}(t_0,m_0)$ decays algebraically in $N$. In the non-compact setting treated below, this algebraic decay is uniform only over the intersection of $\cT_{r_1}(t_0,m_0)$ with a large ball in $\cP_p$, but we ignore this subtlety in the introduction. 
%\footnote{In the non-compact setting treated below, this algebraic decay is uniform only over the intersection of $\cT_{r_1}(t_0,m_0)$ with a large ball in $\cP_p$, but we ignore this subtlety in the introduction.} in $N$. 
\vs
More precisely, we show that, for each $(t,\bx)$ such that $(t,m_{\bx}^N) \in \cT_{r_1}(t_0,m_0)$, 
\begin{align} \label{keyintro}
    \bP\Big[s \mapsto (s,m^N_{\bX^{(t,\bx)}_s}) \text{ leaves } \cT_{r_2}(t_0,m_0) \Big] \leq CN^{-\gamma}, 
    %\quad \text{ for each $(t,\bx)$ such that $(t,m_{\bx}^N) \in \cT_{r_1}(t_0,m_0)$}
\end{align}
where $\bX^{(t,\bx)}$ denotes the optimal trajectory for the $N$-particle problem started from $(t_0,\bx_0)$. Lemma \ref{lem.key} relies crucially on the global convergence rate of $\cV^N$ to $\cU$ already established in \cite{cardaliaguet2023algebraic} and on an ``asymmetric" version of the propagation of chaos arguments in \cite{CardSoug2022}.
\vs
The next step of the argument is to use the fact that, in view of  the regularity of $\cU$ in $\cO$, $\cU^N(t,\bx) = \cU(t,m_{\bx}^N)$ nearly solves \eqref{eq:hjbn} on $\cT_{r}(t_0,m_0)$ for $r$ sufficiently small. In Lemma \ref{lem: ImprovedRateTubes}, we use this fact together with a verification argument to show that, for $r$ small and $(t,\bx)$ such that $(t,m_{\bx}^N) \in \cT_r(t_0,m_0)$, 
\begin{equation}\label{introimprove}
\begin{split} 
    &\cU(t,m_{\bx}^N) - \cV^N(t,\bx) \leq C/N \\
    &\qquad + \bP\Big[s \mapsto (s,m_{\bX^{(t,\bx)}_s}) \text{ leaves } \cT_{r}(t_0,m_0) \Big] \times \sup_{(s,m_{\by}^N) \in \cT_r(t_0,m_0)} \Big(\cU(s,\by^N) - \cV^N(s,m_{\by}^N) \Big). 
\end{split}
\end{equation}
Combining \eqref{introimprove} with \eqref{keyintro}, we show that the rate of convergence improves when the radius of the tube shrinks. More precisely, we establish, for $0 < r_1 \ll r_2 \ll 1$, an estimate of the form 
\begin{equation} \label{introiterate}
\begin{split}
   \sup_{(s,m_{\by}^N) \in \cT_{r_1}(t_0,m_0)}& \Big(\cU(s,m_{\by}^N) - \cV^N(s,\by) \Big) \leq C/N \\[1.2mm]
    &+ CN^{-\gamma} \times  \sup_{(s,m_{\by}^N) \in \cT_{r_2}(t_0,m_0)} \Big(\cU(s,m_{\by}^N) - \cV^N(s,\by) \Big),
\end{split}
\end{equation}
where, crucially, $\gamma$ is independent of $r_1$ and $r_2$. In particular, because $\gamma$ is uniform we can apply \eqref{introiterate} to a finite sequence of radii $r_2^{(1)} \gg r_1^{(1)} = r_2^{(2)} \gg r_1^{(2)} = r_2^{(3)} \gg ...\gg r_1^{(k)} = r_2^{(k)} $ to get that 
\begin{align*}
    \sup_{(s,m_{\by}^N) \in \cT_{r_1^{(k)}}(t_0,m_0)} \Big(\cU(s,m_{\by}^N) - \cV^N(s,\by) \Big) \leq C N^{- (1 \wedge k \gamma)}, 
\end{align*}
and so choosing $k$ large enough we conclude the existence of a small radius $r > 0$ such that the desired rate of convergence holds on the small tube $\cT_r(t_0,m_0)$. This is enough to establish the estimate uniformly over compact subsets of $\cO$ as desired.
\vs
The strategy of proof for the convergence of the gradients in \eqref{introgradthm} is similar, but more complicated because without the comparison principle it is harder to conclude an estimate analogous to \eqref{introimprove}. In addition, while the argument outlined above requires only minor refinements of the regularity results in \cite{CardSoug2022} to execute, the convergence of the gradients requires a new Lipschitz bound on $D_{mm} \cU$ (locally within $\cO$), which is obtained in Theorem \ref{thm: C^2Theorem}.

\subsection{Organization of the paper} In Section \ref{sec:mainresults}, we discuss notations and some preliminaries, and then state precisely our main results. Section \ref{sec:valuesconv} contains the proof of our first main convergence result, Theorem \ref{thm.main}. Section \ref{sec:gradientsconverge} contains the proof of the convergence of the gradients (Theorem \ref{thm.gradient}), and Section \ref{sec:concentration} contains the proof of the concentration inequality (Proposition \ref{prop.concentration}). Finally, in Section \ref{sec:regularity} we state and prove  a number of regularity results which are used in the earlier sections.

\section{Preliminaries and main results} \label{sec:mainresults}

\subsection{Basic notation}

We fix throughout the paper numbers $d \in \N$, $T > 0$. We work on a fixed filtered probability space $(\Omega, \bbF = (\sF_t)_{0 \leq t \leq T}, \bP)$, which hosts independent $d$-dimensional Brownian motions $(W^i)_{i \in \N}$. We use bold to write elements of $(\R^d)^N$ or processes taking values in $(\R^d)^N$, that is,  we write $\bx = (x^1,...,x^N) \in (\R^d)^N$ for a general element of $(\R^d)^N$. We denote by $\cP = \cP(\R^d)$ the space of probability measures on $\R^d$, and, for $q \in (1,\infty)$, we denote by $\cP_q = \cP_q(\R^d)$ the $q$-Wasserstein space, that is, 
the set of $m \in \cP(\R^d)$ such that $M_q(m) < \infty$, where 
\begin{align*}
    M_q(m) \coloneqq \int_{\R^d} |x|^q m(dx)
\end{align*}
is the $q^{\text{th}}-$moment of the measure $m$. 
We endow $\cP_q(\R^d)$ with the usual $q$-Wasserstein distance, which we denote by $\bd_q$.
\vs
We will make use of the calculus on the Wasserstein space as explained in \cite{CardaliaguetDelarueLasryLions} and \cite{CarmonaDelarue_book_I}.  In particular, for a sufficiently smooth $\phi: \cP_2(\R^d) \to \R$, we write $\frac{\delta \phi}{\delta m}(m,x) : \cP_2(\R^d) \times \R^d \to \R$ for the linear derivative of $\phi$, which is defined by the formula 
\begin{align*}
    \phi(m) - \phi(\ov{m}) = \int_0^1 \int_{\R^d} \frac{\delta \phi}{\delta m}(sm+(1-s)\ov{m},x) (m - \ov{m})(dx)ds, 
\end{align*}
together with the normalization convention 
\begin{align*}\label{cond: NormalizationCond}
    \int_{\R^d} \frac{\delta \phi}{\delta m}(m,x) m(dx) = 0.
\end{align*}
When $\frac{\delta \phi}{\delta m}$ exists and is differentiable in its second argument, we denote by $D_m \phi = D_x \frac{\delta \phi}{\delta m}$ the Wasserstein or so called Lions derivative 
\begin{align*}
    D_m \phi(m,x) = D_x \frac{\delta \phi}{\delta m}(m,x) : \cP_2(\R^d) \times \R^d \to \R^d.
\end{align*}
Higher derivatives are denoted similarly. 
\vs
For $k\in \N$, we denote by $\mathcal{C}^k(\mathcal{P}_2(\R^d))$, the space of functions $\phi:\mathcal{P}_2(\R^d)\rightarrow \R$, such that for all $i \in \{1,\cdots, k\}$ and multi-index $l\in \{0,1,..., i\}^d$ with $|l| + i \leq k$, the derivative $D^{(l)} D_m^i \phi$ exists, and is continuous and uniformly bounded. Finally, for $k,n\in \N$, we denote by $C^k(\R^d;\R^n)$, the functions $\psi:\R^d\rightarrow \R^n$ that are $k-$times continuously differentiable. When $n=1$ we will write $C^k$. Similar notation will be used for standard H\"older spaces, that is,  for $\alpha \in (0,1)$, $C^{k + \alpha}$ will be the space of functions in $C^k$ with bounded and $\alpha$-H\"older continuous derivatives up to order $k$.

\subsection{Assumptions} \label{subsec:assump}

The data for our problem consists of the three functions 
\begin{align*}
    L = L(x,a) : \R^d \times \R^d \to \R, \quad \sF = \sF(m) : \sP_2(\R^d) \to \R \ \ \text{and} \ \  \sG = \sG(m) : \sP_2(\R^d) \to \R.
\end{align*}
The Lagrangian $L$ determines the Hamiltonian $H = H(x,p) : \R^d \times \R^d \to \R$ by the formula 
\begin{align*}
    H(x,p) = \sup_{a \in \R^d} \Big\{- a \cdot p - L(x,a) \Big\}.
\end{align*}

For part of the paper, we will work with essentially the same assumptions as in \cite{CardSoug2022}, which we record here.

\begin{assumption} \label{assump:values}
The Hamiltonian  $H$ is  in $C^2(\R^d \times \R^d)$, and, for some $c, C > 0$ and all $(x,p) \in \R^d \times \R^d$, 
\begin{align}
    -C + c|p|^2 \leq H(x,p) \leq C + \frac{1}{c} |p|^2,
\end{align}
and 
\begin{align}
    |D_xH(x,p)| \leq C(1 + |p|)
\end{align}
%for all $(x,p) \in \R^d \times \R^d$. 
Moreover, $H$ is locally strictly convex with respect to the last variable, that is, for each $R>0$, there exists $c_R>0$ such that, for all $(x,p) \in \R^d \times B_R$, 
\begin{align}
    D^2_{pp}H(x,p)\geq c_R I_d. 
\end{align}
%for all $(x,p) \in \R^d \times B_R$. 
Meanwhile, $\sF \in \cC^2(\cP_2(\R^d))$, and $\mathcal F$, $D_m\mathcal F$, $D^2_{ym}\mathcal F$ and $D^2_{mm}\mathcal F$ are uniformly bounded. Finally, $\mathcal G\in \cC^4(\cP_2(\R^d))$ with all derivatives up to order $4$ uniformly bounded.
\end{assumption}

In order to obtain more regularity and to study the convergence of the gradients of $\cV^N$, we require some additional smoothness, recorded here.

\begin{assumption} \label{assump:gradients}
The data $\cF,\cG$ and $H$ satisfy Assumption \ref{assump:values}. In addition $\cF \in \cC^3(\mathcal{P}_2(\R^d);\R)$ and, for $i=1,2,3$ and some $\delta \in (0,1)$,
\[\sup\limits_{m\in \mathcal{P}_2(\R^d)} \|\frac{\delta^{(i)} \cF}{\delta m^{(i)}}(m, \cdot)\|_{C^{2+\delta}((\R^d)^i;\R)}+\sup\limits_{m\in \mathcal{P}_2(\R^d)} \|\frac{\delta^{(i)} \cG}{\delta m^{(i)}}(m, \cdot)\|_{C^{2 +\delta}((\R^d)^i;\R)}<\infty .\]
\end{assumption}
\vs

\subsection{Preliminaries} \label{subsec:prelim}

In this section we recall some of the main results from the recent papers \cite{cardaliaguet2023algebraic} and \cite{CardSoug2022}. 
\vs
First, the main result of \cite{cardaliaguet2023algebraic} (Theorem 2.5 therein) shows that, under Assumption \ref{assump:values}, there exist constants $C > 0$ depending on the data, $\beta_d \in (0,1)$ depending only on $d$, such that, for all  $N \in \N$ and  $(t,\bx) \in [0,T] \times (\R^d)^N$,
\begin{align} \label{eq:cdjsrate}
    |\cV^N(t,\bx) - \cU(t,m_{\bx}^N)| \leq CN^{-\beta_d} \Big(1 + \frac{1}{N} \sum_{i = 1}^N |x^i|^2\Big).
    %\text{ for all } N \in \N, \quad (t,\bx) \in [0,T] \times (\R^d)^N.
\end{align}
In \cite{CardSoug2022}, meanwhile, the authors show that under Assumption \ref{assump:values}, there exists an open and dense  set $\cO \subset [0,T] \times \pr_2(\R^d)$ such that $\cU$ is $C^1$ and satisfies \eqref{eq:hjbinf} in a classical sense on $\cO$.
\vs
To define $\cO$ precisely, we first need some additional  notation and terminology. If $(m,\alpha)$ is an optimizer for the problem defining $\cU(t_0,m_0)$, then we call the curve $[t_0,T] \ni t \mapsto m_t \in \cP_2(\R^d)$ an optimal trajectory starting  from $(t_0,m_0)$. Assumption \ref{assump:values} is enough to guarantee the validity of a standard result from the theory of mean field control, namely, that, if $m$ is an optimal trajectory started from $(t_0,m_0)$, then there exists a unique function $u : [t_0,T] \times \R^d \to \R$, which is called  the multiplier associated to the optimal trajectory $m$, such that the pair $(u,m)$ solves the MFG system
\begin{align*}
\begin{cases} \ds
 -\pt u -\Delta u+H(x,Du)= \frac{\delta \cF}{\delta m}(m_t,x) \ \text{ in } \ (t_0,T)\times \R^d, \vspace{.1cm}\\ \ds
    \pt m-\Delta m-\dive(mD_pH(x,Du))=0 \ \text{ in } \ (t_0,T) \times \R^d, \vspace{.1cm}\\ \ds 
    m_{t_0} =m_0 \ \text{and} \  u(T,x)=\frac{\delta \cG}{\delta m} (m_T,x) \text{ in }\R^d.
\end{cases}
\end{align*}
%We call this function $u$ the \textit{multiplier} associated to the optimal trajectory $m$. 

The set $\cO$ is defined as the set of $(t_0,m_0) \in [0,T) \times \pr_2(\R^d)$ such that there is a unique optimal trajectory $m$ started from $(t_0,m_0)$, which is stable (see \cite[Definition 2.5]{CardSoug2022})  in the sense that, if $u$ is the corresponding multiplier, then $(z,w) = (0,0)$ is the only solution in the space
\begin{align*}
    C^{(1 + \delta)/2, 1 + \delta} \times C\big([t_0,T]; \big(C^{2 + \delta})'\big)
\end{align*}
to the linear system
\begin{align*}
    \begin{cases} \ds
        - \partial_t z - \Delta z + D_pH(x,Du) \cdot Dz = \frac{\delta \sF}{\delta m}(x,m(t))(\mu(t)) \ \text{in } \  (t_0,T) \times \R^d, \vspace{.1cm} \\ \ds
        \partial_t \mu - \Delta \mu - \text{div}(D_pH(x,Du) \mu) - \text{div}(D_{pp}H(x,Du) Dz m) = 0 \  \text{in } \ (t_0,T) \times \R^d, \vspace{.1cm} \\ \ds
        \mu(t_0) = 0 \ \text{and} \  z(T,x) = \frac{\delta \sG}{\delta m}(x,m(T))(\mu(T)) \ \text{in } \ \R^d.
    \end{cases}
\end{align*}
%see \cite[Definition 2.5]{CardSoug2022}. 
The set $\cO$ will play a crucial role in the results of the present paper, as well.

\subsection{Main results}

Our first main result shows that on compact with respect to $\cP_p$, for some $p > 2$, subsets of $\cO$, the rate \eqref{eq:cdjsrate} can be substantially sharpened.

\begin{thm} \label{thm.main}
    Let Assumption \ref{assump:values} hold, and assume that $p > 2$. Then, for each subset $K$ of $\cO$ which is compact in $\cP_p(\R^d)$, there is a constant $C = C(K)$ such that, for each $(t,\bx) \in [0,T] \times (\R^d)^N$ such that $(t,m_{\bx}^N) \in K$, 
    \begin{align} \label{eq.rate}
        |\cU(t,m_{\bx}^N) - \cV^N(t,\bx) | \leq C/N.
    \end{align}
 %   for each $(t,\bx) \in [0,T] \times (\R^d)^N$ such that $(t,m_{\bx}^N) \in K$. 
\end{thm}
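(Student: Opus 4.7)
The plan is to establish the upper bound $\cV^N(t,\bx) \leq \cU(t,m_{\bx}^N) + C/N$ globally using the semi-concavity estimates already available for $\cU$ (this direction does not use $\cO$ at all, since one can plug an optimizer for the mean field problem into the $N$-particle problem and control the correction by a standard computation using the $\cC^2$-regularity of $\sF$ and $\sG$). Then the entire work is to prove the matching lower bound $\cU(t,m_{\bx}^N) - \cV^N(t,\bx) \leq C/N$, which is genuinely local to $\cO$. Fixing $(t_0,m_0) \in \cO$, I will use the unique and stable optimal trajectory from $(t_0,m_0)$ to define tubes $\cT_r(t_0,m_0)$ and argue inside them.

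The first technical ingredient is a concentration estimate. I would prove that there is some exponent $\gamma > 0$ such that for all $0 < r_1 \ll r_2 \ll 1$ and all $(t,\bx)$ with $(t,m_{\bx}^N) \in \cT_{r_1}(t_0,m_0)$,
\begin{equation*}
\bP\Big[ s \mapsto (s,m_{\bX^{(t,\bx)}_s}^N) \text{ exits } \cT_{r_2}(t_0,m_0)\Big] \leq C N^{-\gamma},
\end{equation*}
where $\bX^{(t,\bx)}$ is an optimal trajectory for $\cV^N$ starting from $(t,\bx)$. The strategy for this is to plug the optimal $N$-particle control into the mean field cost functional, to compare the result to $\cU(t,m_{\bx}^N)$ using the global algebraic rate from \cite{cardaliaguet2023algebraic}, and to exploit stability of the limiting system (as codified by the linearized system in the definition of $\cO$) to force the resulting flow of empirical measures to remain close to the mean field optimal trajectory. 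This is essentially an asymmetric propagation-of-chaos argument in the spirit of \cite{CardSoug2022}.

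The second technical ingredient is a verification inequality. Because $\cU \in \cC^1$ and solves $\text{HJB}_\infty$ classically on $\cO$, the lift $\cU^N(t,\bx) := \cU(t,m_{\bx}^N)$ is an almost-solution of $\text{HJB}_N$ on $\cT_r(t_0,m_0)$ with residual of size $O(1/N)$ (this is where I will need the improved regularity results recalled in Section 6, in particular the existence and continuity of $D_{mm}\cU$). Applying Itô's formula to $\cU^N$ along an optimal trajectory of $\cV^N$ up to the exit time of $\cT_r$, I obtain
\begin{equation*}
\cU(t,m_{\bx}^N) - \cV^N(t,\bx) \leq C/N + \bP[\text{exit}] \cdot \sup_{\cT_r} (\cU - \cV^N)^+.
\end{equation*}
Combining this with the concentration estimate yields, for $0 < r_1 \ll r_2 \ll 1$,
\begin{equation*}
\sup_{\cT_{r_1}}(\cU - \cV^N) \leq C/N + CN^{-\gamma} \sup_{\cT_{r_2}}(\cU - \cV^N),
\end{equation*}
with $\gamma$ independent of $r_1,r_2$. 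Iterating this through a geometric sequence of radii $r^{(1)} > r^{(2)} > \cdots > r^{(k)}$ and using the crude global bound \eqref{eq:cdjsrate} at the outermost scale kills the prefactor on the supremum after finitely many steps, yielding $\sup_{\cT_r} (\cU - \cV^N) \leq C/N$ for some small $r > 0$ depending on $(t_0,m_0)$.

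The final step is a covering argument: a compact set $K \subset \cO$ can be covered by finitely many tubes $\cT_{r_j}(t_0^j,m_0^j)$ on each of which the above analysis applies, yielding the uniform bound $C = C(K)$. The main obstacle, which accounts for the hypothesis of compactness in $\cP_p$ for $p > 2$ rather than in $\cP_2$, lies in the concentration estimate: one needs uniform control of higher moments of the empirical measures along optimal trajectories to quantify the convergence of $m_{\bX_s}^N$ to the mean field flow $m_s$ with an algebraic rate, and this is available only when $M_p(m_{\bx}^N)$ stays uniformly bounded for some $p > 2$. This is precisely the role played by the choice of $\cP_p$-compactness in the hypothesis, and controlling the exit probabilities uniformly in this stronger topology is the most delicate point of the proof.
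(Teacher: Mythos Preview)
Your proposal is correct and follows essentially the same route as the paper: the easy global upper bound, then tubes around the stable optimal trajectory, the exit-probability estimate via the global algebraic rate plus an asymmetric propagation-of-chaos comparison, the verification inequality for the lift $\cU^N$, and the iteration in shrinking radii followed by a compactness covering. One technical correction: Theorem~\ref{thm.main} is stated under Assumption~\ref{assump:values} only, under which $D_{mm}\cU$ is \emph{not} known to exist (Theorem~\ref{thm.c2} needs the stronger Assumption~\ref{assump:gradients}); the paper instead uses that $m\mapsto D_m\cU(t,m,x)$ is locally $\bd_1$-Lipschitz (Proposition~\ref{prop.dmlip}), which is enough to make $\cU^N$ an a.e.\ solution of \eqref{eq:hjbn} with $O(1/N)$ residual via the argument of \cite[Proposition~5.1]{ddj2023}, so your verification step should be justified this way rather than by invoking $D_{mm}\cU$.
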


\begin{rmk}
As explained above, Example 2 in \cite{ddj2023} clearly shows that, even if all the data is $C^{\infty}$, we cannot expect a global convergence rate (of $V^N$ to $U$) better than $1/ \sqrt{N}$. Thus Theorem \ref{thm.main} shows that the convergence rate is generally different inside $\cO$ than it is outside of $\cO$.
\end{rmk}

\begin{rmk} \label{rmk:lp}
%Let us also remark that 
Theorem \ref{thm.main} is not a consequence only of the regularity of $\cU$ inside $\cO$, that is,  if we know only that $\cU$ is smooth on some arbitrary open set $\cO'$, it does not follow that the rate is $1/N$ inside $\cO'$ in the sense of \eqref{eq.rate}. Indeed, the invariance of $\cO$ under optimal trajectories, that is  the fact that optimal trajectories for the MFC problem which start in $\cO$ remain there, plays a crucial role throughout the proof of Theorem \ref{thm.main}.
\end{rmk}

\begin{rmk}
    The fact that the rate is uniform only over subsets of $\cO$ which are compact in $\cP_p$ for some  $p > 2$ is related to the fact that extra integrability is needed in order to obtain the convergence of empirical measures in the expected Wasserstein distance. More precisely, in order to obtain a key lemma (Lemma \ref{lem.key} below), we rely on the results of \cite{FournierGuillin} to bound the probability that an auxiliary particle system exits a small ``tube" around an optimal trajectory of the limiting problem, with the radius of the tube measured with respect to $\bd_2$.  For this it is necessary to  control  the $p^{\text{th}}-$moment for the initial condition of the particle system.
\end{rmk}

Our next result shows that a sharp rate of convergence can also be obtained for the gradients.

\begin{thm} \label{thm.gradient}
   Let Assumption \ref{assump:values} hold, and assume that  $p > 2$. Then, for each subset $K$ of $\cO$ which is compact in $\cP_p$, there is a constant $C = C(K)$ such that,  for each $i = 1,...,N$ and $(t,\bx) \in [0,T] \times (\R^d)^N$ such that $(t,m_{\bx}^N) \in K$, 
    \begin{align} \label{eq.rategradient}
        |D_m \cU(t,m_{\bx}^N,x^i) - ND_{x^i}\cV^N(t,\bx) | \leq C/N.
    \end{align}
 %   for each $i = 1,...,N$ and $(t,\bx) \in [0,T] \times (\R^d)^N$ such that $(t,m_{\bx}^N) \in K$. 
\end{thm}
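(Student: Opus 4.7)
The plan is to adapt the tube-and-iterate scheme used for Theorem \ref{thm.main} to the level of gradients, replacing the comparison principle (which underlies the verification estimate \eqref{introimprove}) by a duality/BSDE argument that exploits the $\cC^2$ regularity of $\cU$ on $\cO$. Write $u^N(t,\bx) := \cU(t,m_{\bx}^N)$, so that by the chain rule on $\cP_2$, $ND_{x^i} u^N(t,\bx) = D_m\cU(t,m_{\bx}^N,x^i)$ and the estimate \eqref{eq.rategradient} becomes
\[
\bigl| ND_{x^i}\bigl(\cV^N - u^N\bigr)(t,\bx)\bigr| \leq C/N.
\]
The first step is to use Theorem \ref{thm.c2} (existence, continuity, and local Lipschitz bounds for $D_{mm}\cU$ on $\cO$) to show that on any tube $\cT_r(t_0,m_0)$ of sufficiently small radius, $u^N$ is a classical subsolution/supersolution of \eqref{eq:hjbn} up to an error of order $1/N$. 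Specifically, evaluating $\partial_t u^N$, $D_{x^i} u^N$, and $D_{x^ix^i} u^N$ in terms of $\partial_t\cU$, $D_m\cU$, $D_x D_m\cU$, and $D_{mm}\cU$, and using \eqref{eq:hjbinf}, the residual for $u^N$ in \eqref{eq:hjbn} is bounded by $\frac{C}{N}\bigl(1 + \tfrac{1}{N}\sum_i|x^i|^2\bigr)$ on $\cT_r(t_0,m_0)$, where the extra $1/N$ in the Laplacian terms comes from the trace of $D_{mm}\cU$ at coincident points.

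Next I would obtain a representation for $ND_{x^i}\cV^N$ in terms of the optimal feedback and of the adjoint process of the $N$-particle problem. Namely, under Assumption \ref{assump:gradients}, $\cV^N$ is $\cC^{1,2}$ and the optimal feedback is $\hat\alpha^{N,i}(t,\bx) = -D_pH(x^i,ND_{x^i}\cV^N(t,\bx))$; differentiating \eqref{eq:hjbn} in $x^i$ and representing the resulting linear equation via Feynman--Kac yields a stochastic representation of $p^{N,i}_t := ND_{x^i}\cV^N(t,\bX_t^{(t_0,\bx_0)})$ as a BSDE driven by $D_x\sF/\delta m$, $D_x\sG/\delta m$ and $D_xH$ along the optimal $N$-particle trajectory $\bX^{(t_0,\bx_0)}$. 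On the other side, the analogous object at the mean-field level is $q^{N,i}_t := D_m\cU(t,m_{\bX_t}^N, X_t^i)$, for which the $\cC^2$-expansion from the first step, combined with It\^o's formula applied to $\cU$, produces a BSDE with the \emph{same} driver up to an $O(1/N)$ forcing term, provided $(t,m_{\bX_t}^N)$ stays inside $\cT_r(t_0,m_0)$.

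The third step is the core estimate. Taking the difference $\delta^{N,i}_t := p^{N,i}_t - q^{N,i}_t$, the two BSDEs yield, on the event $\{(s,m_{\bX_s}^N)\in\cT_r(t_0,m_0)\ \forall s\in[t,T]\}$, an integral inequality of the form $|\delta^{N,i}_t| \lesssim 1/N + \int_t^T (\text{linear combinations of }|\delta^{N,j}_s|) \,ds$, which by Gr\"onwall gives $1/N$. On the complementary event we only have the crude a priori bound $|p^{N,i}|,|q^{N,i}| \leq C$, coming from semiconcavity of $\cV^N$ and boundedness of $D_m\cU$ on $\cO$. Combining these two estimates and averaging over $i$ yields the analogue of \eqref{introimprove},
\[
\sup_{(t,m_{\bx}^N)\in\cT_{r_1}(t_0,m_0)} \tfrac{1}{N}\sum_i |\delta^{N,i}|\ \leq\ \tfrac{C}{N} + C\,\bP\bigl[(s,m_{\bX_s}^N)\ \text{leaves}\ \cT_{r_2}(t_0,m_0)\bigr] \cdot \sup_{\cT_{r_2}(t_0,m_0)} \tfrac{1}{N}\sum_i|\delta^{N,i}|,
\]
for $0<r_1 \ll r_2$. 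Inserting the key exit estimate \eqref{keyintro} from Lemma \ref{lem.key}, which is already uniform on subsets compact in $\cP_p$ with $p>2$, and iterating over a finite sequence of shrinking radii as in the proof of Theorem \ref{thm.main}, gives the averaged $\ell^1$ bound $\tfrac{1}{N}\sum_i|\delta^{N,i}|\leq C/N$ on small tubes.

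Finally, to upgrade the $\ell^1$-in-$i$ estimate to the uniform-in-$i$ estimate \eqref{eq.rategradient}, I would use the representations of $p^{N,i}$ and $q^{N,i}$ once more: both differ from their evaluations at $t$ by an integral along the particle trajectory, so the exchangeability of the $N$-particle system together with standard moment bounds on the Brownian increments and the Lipschitz-in-$x$ regularity of $D_mH$, $D^2_{xm}\cU$ and $D_{mm}\cU$ (this last provided by Theorem \ref{thm: C^2Theorem}) convert the averaged bound into the particle-wise bound. The main obstacle is step three: without comparison for gradients one must run the BSDE argument carefully so that the driver of the difference equation is truly Lipschitz with constant independent of $N$, and this is exactly where the local Lipschitz bound on $D_{mm}\cU$ inside $\cO$ from Theorem \ref{thm: C^2Theorem} is indispensable.
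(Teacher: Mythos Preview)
Your overall architecture is the same as the paper's: use the $\cC^2$ regularity of $\cU$ on $\cO$ (Theorem~\ref{thm.c2}) to show that $u^N(t,\bx)=\cU(t,m_{\bx}^N)$ has residual $O(1/N)$ in \eqref{eq:hjbn}, differentiate to get linear equations for $\cU^{N,i}=D_{x^i}u^N$ and $\cV^{N,i}=D_{x^i}\cV^N$, compare them as BSDEs along the $N$-particle optimal trajectory, and iterate over shrinking tubes using the exit estimate of Lemma~\ref{lem.key}. The paper carries this out with the $\ell^2$ quantity $\sum_i|\cU^{N,i}-\cV^{N,i}|^2$ (Lemmas~\ref{lem: ImprovedRateTubesgradientl2}--\ref{lem.tubegradientd2}); your $\ell^1$ variant would work equally well for this averaged stage. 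One small correction to your step~3: Gr\"onwall on the single inequality $|\delta^{N,i}|\lesssim 1/N+\int\sum_j a_{ij}|\delta^{N,j}|$ does \emph{not} give $|\delta^{N,i}|\leq C/N$ directly, precisely because of the coupling; you must sum (or take $\ell^2$) first, as you then do.

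The genuine gap is your final step. Exchangeability does not upgrade an averaged bound $\tfrac{1}{N}\sum_i|\delta^{N,i}|\leq C/N$ to a pointwise one: the estimate is at a fixed deterministic $(t,\bx)$, where the particles sit at distinct positions $x^1,\dots,x^N$, and symmetry only says the bound is invariant under permuting them---it gives no control on $\max_i|\delta^{N,i}|$. The paper's upgrade is different and is the idea you are missing: go back to the BSDE for a \emph{single} index $i$ and use the already-established averaged bound to absorb the mean-field coupling. Concretely, in the dynamics of $|\Delta Y^i|^2$ the off-diagonal drift $\sum_{j\neq i}A^{i,j}\Delta Y^i\Delta Y^j$ is handled by Young's inequality as $\tfrac{C}{N}|\Delta Y^i|^2+CN\sum_{j\neq i}|A^{i,j}|^2|\Delta Y^j|^2$; since $|A^{i,j}|\leq C/N$ for $j\neq i$, the second term is $\tfrac{C}{N}\sum_j|\Delta Y^j|^2$, which is $O(1/N^4)$ by the $\ell^2$ bound from the first stage. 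Gr\"onwall then yields
\[
|\cU^{N,i}-\cV^{N,i}|^2(t,\bx)\ \leq\ \frac{C}{N^4}\ +\ \sup_{\cT_r^N}|\cU^{N,i}-\cV^{N,i}|^2\cdot \bP[\tau^{N,t,\bx}<T],
\]
and a second tube iteration (exactly as for the averaged quantity) closes to $|\cU^{N,i}-\cV^{N,i}|\leq C/N^2$, i.e.\ \eqref{eq.rategradient}. So replace your ``exchangeability'' paragraph by this feed-back-and-iterate argument.
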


\begin{rmk} \label{rmk:feedback}
   The main interest of Theorem \ref{thm.gradient} is that it implies a convergence rate for the optimal feedbacks
%   . Indeed, the optimal feedback
    for the $N$-particle problem which are  given by
    \begin{align*}
        \alpha^{N,i}(t,\bx) = - D_p H\big(x^i, N D_{x^i} \cV^N(t,\bx)\big). 
    \end{align*}
That is,  for any initial condition $(t_0,\bx_0) \in [0,T] \times (\R^d)^N$, the optimizer $\bm \alpha$ for the minimization problem in \eqref{vndef} satisfies $\alpha^i_t = \alpha^{N,i}(t,\bX_t)$, $\bX$ denoting the optimal trajectory starting from $(t_0,\bx_0)$. Meanwhile, the optimal feedback for the mean field problem, at least for initial conditions $(t_0,m_0) \in \cO$, takes the form 
    \begin{align*}
        \alpha^{\text{MF}}(t,m,x) = - D_p H \big(x^i, D_m \cU(t,m,x) \big).
    \end{align*}
That is,  for any $(t_0,m_0) \in \cO$, the unique optimizer $\alpha$ for the minimization problem in \eqref{udef} satisfies $\alpha(t,x) = \alpha^{\text{MF}}(t,m_t,x)$, $m_t$ denoting the unique optimal trajectory started from $(t_0,m_0)$. Thus we can clearly infer from Theorem \ref{thm.gradient} that,  for each subset $K$ of $\cO$ which is compact in $\cP_p$, there exists $C = C(K)$ such that 
    \begin{align*}
       \sup_{ \{(t,\bx) : (t,m_{\bx}^N) \in K\}} |\alpha^{N,i}(t,\bx) - \alpha^{\text{MF}}(t,m_{\bx}^N,x^i)| \leq C/N. % \quad C = C(K).
    \end{align*}
%    for each subset $K$ of $\cO$ which is compact in $\cP_p$. 
\end{rmk}

In \cite{CardSoug2022}, it is shown in Theorem 1.2 that the regularity of $\cU$ in $\cO$ implies a convergence for the optimal trajectories of $\cV^N$, in the spirit of propagation of chaos, at least for appropriate initial conditions. Using the convergence of the gradients obtained in Theorem \ref{thm.gradient}, we are able to supplement this result with the following concentration inequality.

\begin{prop} \label{prop.concentration}
    Let Assumption \ref{assump:gradients} hold, and fix $(t_0,m_0) \in \cO$ such that $m_0 \in \cP_p$ for some $p > 2$ and, in addition,  satisfies a quadratic transport-entropy inequality, that is,  there exists $\kappa > 0$ such that 
    \begin{align} \label{t2}
        \bd_2(m_0, \nu) \leq \kappa \cR( \nu | m_0) \  \text{if } \ \nu \ll m_0, 
    \end{align}
    where $\cR$ denotes the relative entropy, 
    \begin{align*}
        \cR(\nu | \mu) = \begin{cases}
            \int \frac{d \nu}{d \mu} \text{log} \frac{d \nu}{d\mu} d\mu & \  \text{if } \ \nu \ll \mu, \\[1.2mm]
            \infty & \text{otherwise}.
        \end{cases}
    \end{align*}
    For each $N$, denote by $\bX^N = (X^{N,1},...,X^{N,N})$ the solution of 
    \begin{align} \label{def.xnsymmetric}
        dX_t^{N,i} = - D_p H(X_t^{N,i}, N D_{x^i} V^N(t,\bX_t^N)) dt + \sqrt{2} dW_t^i, \quad X_{t_0}^i = \xi^i, 
    \end{align}
    where $(\xi^i)_{i \in \N}$ are i.i.d. with common law $m_0$. Then, for any $\epsilon, \eta > 0$, there exists a constant $C > 0$ such that,  for each $N \geq 1$,
    \begin{align} \label{eq.concentration}
        \P\Big[ \sup_{t_0 \leq t \leq T} \bd_2\big(m_{\bX_t^N}^N, m_t \big)  > \epsilon \Big] \leq C\exp\big(-N^{1 - \eta}\big),
    \end{align}
   where $(m,\alpha)$ denotes the unique optimizer for the problem defining $\cU(t_0,m_0)$. 
\end{prop}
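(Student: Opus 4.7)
The strategy is to couple the $N$-particle optimal system $\bX^N$ with an idealized McKean--Vlasov particle system $\bar\bX^N$ of i.i.d. copies of the mean-field optimal process; obtain path-space concentration for $\bar\bX^N$ from a quadratic transportation inequality on path space; and transfer the estimate back to $\bX^N$ via Theorem \ref{thm.gradient}.

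Define the idealized system by
\[
d\bar X_t^{N,i} = -D_pH\bigl(\bar X_t^{N,i},\, D_m\cU(t, m_t, \bar X_t^{N,i})\bigr)\,dt + \sqrt{2}\,dW_t^i,\qquad \bar X_{t_0}^{N,i} = \xi^i.
\]
Since $(t, m_t) \in \cO$ for all $t \in [t_0,T]$ by invariance of $\cO$ under optimal flows, and $D_m\cU$ is Lipschitz on compact subsets of $\cO$ by Theorem \ref{thm.c2}, the drift $(t,x)\mapsto -D_pH(x, D_m\cU(t, m_t, x))$ is uniformly Lipschitz in $x$; by the Fokker--Planck equation in \eqref{takis1}, the $\bar X^{N,i}$ are i.i.d. with $\bar X^{N,i}_t \sim m_t$. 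Combining \eqref{t2} for $m_0$ with the Lipschitz drift, the stability results of Djellout--Guillin--Wu yield a $T_2$-inequality for the path law of $\bar X^{N,1}$ on $C([t_0,T];\R^d)$ equipped with the sup-norm. A standard empirical-measure concentration bound (Bolley--Guillin--Villani) then gives, for each $\epsilon' > 0$,
\[
\P\Bigl[\sup_{t_0 \leq t \leq T} \bd_2(m^N_{\bar\bX^N_t}, m_t) > \epsilon'\Bigr] \leq C_1 \exp(-c_1 N),
\]
with $C_1, c_1 > 0$ depending on $\epsilon'$ and the data; this is already stronger than the $\exp(-N^{1-\eta})$ bound sought.

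Next, fix $p' \in (2,p)$ and choose a $\cP_{p'}$-compact set $K \subset \cO$ containing $\{(t, m_t) : t \in [t_0,T]\}$ together with a small $\bd_{p'}$-neighborhood; this is possible since $\sup_t M_p(m_t) < \infty$ (from $m_0 \in \cP_p$ and the Lipschitz drift) and $\cO$ is open. Set $\tau^N := \inf\{t \geq t_0 : (t, m^N_{\bX^N_t}) \notin K\} \wedge T$. On $[t_0, \tau^N)$, Theorem \ref{thm.gradient} applies and gives
\[
\bigl|N D_{x^i} \cV^N(t, \bX^N_t) - D_m \cU(t, m^N_{\bX^N_t}, X^{N,i}_t)\bigr| \leq C/N.
\]
Letting $\Delta^i_t := X^{N,i}_t - \bar X^{N,i}_t$, the Brownian terms cancel in the difference of drifts, and by the Lipschitz continuity of $D_pH$ and of $D_m\cU$ on $K$ (Theorem \ref{thm.c2}) one derives
\[
\tfrac{d}{dt} |\Delta^i_t| \leq C\bigl(|\Delta^i_t| + \bd_2(m^N_{\bX^N_t}, m_t) + 1/N\bigr).
\]
Using $\bd_2(m^N_{\bX^N_t}, m_t) \leq \|\Delta_t\|_2 + \bd_2(m^N_{\bar\bX^N_t}, m_t)$ with $\|\Delta_t\|_2 := (\tfrac{1}{N}\sum_j |\Delta^j_t|^2)^{1/2}$, a Grönwall argument on $\|\Delta_t\|_2$ yields, on the event $E^N := \{\sup_t \bd_2(m^N_{\bar\bX^N_t}, m_t) \leq \epsilon'\}$,
\[
\sup_{t_0 \leq t < \tau^N} \|\Delta_t\|_2 \leq C_2(\epsilon' + 1/N),
\]
with $C_2$ independent of $\epsilon'$ and $N$. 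Choosing $\epsilon'$ small enough (depending on $\epsilon$) and $N$ large forces $\sup_t \bd_2(m^N_{\bX^N_t}, m_t) \leq \epsilon$ on $E^N$; meanwhile an auxiliary moment bound on $m^N_{\bX^N_t}$, obtained from the boundedness of the drift together with $m_0 \in \cP_p$, rules out exit from $K$ through the ``moment side.'' Hence $\tau^N = T$ on $E^N$, and combining with the earlier concentration inequality for $\bar\bX^N$ yields \eqref{eq.concentration}.

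\emph{The main obstacle} is the circular interdependence between Theorem \ref{thm.gradient}---which needs $m^N_{\bX^N_t}$ to remain in the $\cP_{p'}$-compact set $K \subset \cO$---and the desired bound $\bd_2(m^N_{\bX^N_t}, m_t) \lesssim \epsilon$, which is itself what confines the empirical measure to $K$. The stopping time $\tau^N$ together with the Grönwall bootstrap breaks this loop. A secondary technical difficulty is the mismatch between the $\bd_2$-control given by the coupling and the $\cP_{p'}$-topology in which $K$ must be compact; this is handled by supplementary $p$-moment bounds on $m^N_{\bX^N_t}$.
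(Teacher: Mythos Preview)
Your approach is correct and close in spirit to the paper's, but differs in one structural choice: the intermediate particle system. You couple $\bX^N$ with an \emph{i.i.d.} system $\bar\bX^N$ whose drift is $-D_pH(\cdot, D_m\cU(t,m_t,\cdot))$ (feedback through the limit measure), obtain path-space $T_2$ via Djellout--Guillin--Wu, and then concentration of $m^N_{\bar\bX^N}$ via Bolley--Guillin--Villani. The paper instead couples $\bX^N$ with an \emph{interacting} system $\tilde\bX^N$ whose drift is $-D_pH(\cdot, D_m\cU(t,m^N_{\tilde\bX^N_t},\cdot))$ (feedback through the empirical measure), and invokes a concentration result for McKean--Vlasov systems from \cite{Delarue2018FromTM}. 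The payoff of the paper's choice is that the comparison step (your Gr\"onwall bootstrap) becomes an almost-sure bound $\tfrac{1}{N}\sum|X^{N,i}-\tilde X^{N,i}|^2 \leq C/N^2$, with no extra $\epsilon'$ term, because the two drifts differ only by the $O(1/N)$ error from Theorem~\ref{thm.gradient}. Your choice trades that for a slightly more elementary concentration input (i.i.d. rather than interacting), at the cost of an additional $\bd_2(m^N_{\bX^N_t},m_t)$ term in the Gr\"onwall step, which you correctly absorb on the event $E^N$.

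Two points deserve more care. First, you are vague about where the exponent $1-\eta$ actually comes from: the Bolley--Guillin--Villani step gives $\exp(-cN)$, but the bottleneck is the ``moment side'' exit from $K$. Since $T_2$ makes the $\xi^i$ sub-Gaussian, $|\xi^i|^{p'}$ are only sub-Weibull with tail parameter $2/p'$, so $\P[\tfrac{1}{N}\sum|\xi^i|^{p'}>R] \leq C\exp(-cN^{2/p'})$; taking $p'\downarrow 2$ yields the arbitrarily small $\eta$. Second, your set $K$ should be taken as a $\bd_2$-tube around the optimal trajectory intersected with a $p'$-moment ball (as in the paper's $\cT_{r,R}$), not a $\bd_{p'}$-neighborhood: your Gr\"onwall argument only controls $\bd_2$, so a $\bd_{p'}$-neighborhood would not close the bootstrap for $\tau^N=T$.
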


\begin{rmk}
    Concentration results similar to Proposition \ref{prop.concentration} were obtained for mean field games in \cite{Delarue2018FromTM} under the assumption that the corresponding master equation has a smooth solution. The analogous condition in our setting would be that the value function $\cU$ is globally smooth, which we do not have because we do not assume any convexity on $\sF$ and $\sG$. 
\end{rmk}

\begin{rmk}
    The presence of the $\eta$ in \eqref{eq.concentration} comes from the fact that in Theorem \ref{thm.gradient} we have convergence of the optimal feedback strategies only on compact subsets of $\cP_p$, with $p > 2$. In particular, in the proof of Proposition \ref{prop.concentration} we must begin the argument by bounding from above the probability that the emprical measure of the optimal trajectory for the $N$-particle problem exits a large ball in $\cP_p$. This can be estimated from above by a multiple of a probability of the form 
    \begin{align}
        \bP\big[ \frac{1}{N} \sum_{i = 1}^N |\xi^i|^p > C\big]
    \end{align}
    for some large $C$, where $(\xi^i)_{i \in \N}$ are i.i.d. sub-Gaussian random variables. When $p \leq 2$, the random variables $|\xi^i|^p$ are i.i.d. random variables with sub-exponential tails, and so, for large enough $C$, we can indeed get $\bP\big[ \frac{1}{N} \sum_{i = 1}^N |\xi^i|^p > C\big] \leq C \exp(-cN)$. However, when $p > 2$, the random variables $|\xi^i|^p$ are only ``sub-Weybull" rather than sub-exponential, that is,  they have tails like $\bP[|\xi^i|^p > x] \approx \exp(-c x^{2/p})$, and this implies only the weaker estimate \begin{align}
        \bP\big[ \frac{1}{N} \sum_{i = 1}^N |\xi^i|^p > C\big] \leq C \exp(-cN^{2/p}).
    \end{align}
    The ability to choose $p$ arbitrarily close to $2$ leads to the arbitratrily small parameter $\eta$ appearing in \eqref{eq.concentration}.
\end{rmk}

A key step in proving Theorem \ref{thm.gradient} is showing that, under Assumption \ref{assump:gradients}, the $\cC^1$ regularity obtained in \cite{CardSoug2022} can be improved to $\cC^2$ regularity. This result is interesting in its own right, and so we state it here, alongside our main convergence results.

\begin{thm} \label{thm.c2}
    Under Assumption \ref{assump:gradients}, the derivative $D_{mm} \cU$ exists and is continuous in $\cO$. Moreover, for each $(t_0,m_0) \in \cO$, there exist constants $\delta, C > 0$ such that, for each $t$, $m_1,m_2$ with $|t - t_0| < \delta$, $\dtwo(m_0,m_i) < \delta$ and  $i = 1,2$, we have 
    \begin{align*}
        \sup_{x,y \in \R^d} |D_{mm} \cU(t,m_1,x,y) - D_{mm} \cU(t,m_2,x,y)| \leq C \bd_1(m_1,m_2).
    \end{align*}
\end{thm}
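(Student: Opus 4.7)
The starting point is the representation of the first Wasserstein derivative on $\cO$ established in \cite{CardSoug2022}: for $(t_0,m_0)\in\cO$ with associated MFG solution $(u,m)$,
\[
D_m\cU(t_0,m_0,x)=Du(t_0,x).
\]
To obtain $D_{mm}\cU$, the plan is to differentiate the MFG system once more with respect to its initial condition, and use an appropriate implicit function theorem. More precisely, for a perturbation of $m_0$ in the direction of a signed measure $\nu$ with zero mean (for instance $\nu=\delta_y-m_0$), the linearization of the MFG system around $(u,m)$ is exactly the system $(z,\mu)$ appearing in the definition of $\cO$, with inhomogeneous source $\mu(t_0)=\nu$. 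By the stability assumption built into the definition of $\cO$, this linearized system has, for each such $\nu$, a unique solution $(z^\nu,\mu^\nu)$ in $C^{(1+\delta)/2,1+\delta}\times C([t_0,T];(C^{2+\delta})')$. I would then identify
\[
\frac{\delta}{\delta m}(D_m\cU)(t_0,m_0,x,y)=Dz^{\delta_y-m_0}(t_0,x),\qquad D_{mm}\cU(t_0,m_0,x,y)=D_x Dz^{\delta_y-m_0}(t_0,x),
\]
after enforcing the usual normalization $\int D_{mm}\cU(t_0,m_0,x,y)m_0(dy)=0$.

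The first task is to make rigorous the differentiation of the nonlinear MFG flow with respect to the initial measure. For this I would set up a Banach-space implicit function theorem: introduce the nonlinear map sending $(t_0,m_0,u)$ to the residual of the MFG system (with $m$ obtained from $u$ via the Fokker--Planck equation), whose linearization at an MFG solution is precisely the system $(z,\mu)$. The stability condition defining $\cO$ is the invertibility hypothesis needed to apply the implicit function theorem, and yields a $C^1$ dependence of $u$ (and $m$) on $(t_0,m_0)$, locally in $\cO$. Iterating the argument --- i.e.\ differentiating the linearized system one more time --- gives a second linearized (forward-backward) system with source terms built from $(z^\nu,\mu^\nu)$ and the third linear derivatives of $\sF,\sG$ and $H$; this is where Assumption \ref{assump:gradients} (control of $\delta^{(3)}\sF/\delta m^{(3)}$ and $\delta^{(3)}\sG/\delta m^{(3)}$ in $C^{2+\delta}$, together with $H\in C^2$ and the strong convexity of $H$) is needed to make sense of the coefficients. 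Invertibility of this second linearized system reduces to the same stability condition, because the operator is exactly the adjoint/dual of the first one.

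Continuity and the Lipschitz estimate in $\bd_1$ then reduce to standard Schauder-type estimates for the linear FBPDE $(z^\nu,\mu^\nu)$, combined with the uniform (in $y$) regularity of the source terms. For two initial conditions $m_1,m_2$ with $\dtwo(m_0,m_i)<\delta$, write
\[
D_{mm}\cU(t,m_2,x,y)-D_{mm}\cU(t,m_1,x,y)=\int_0^1 \partial_s\bigl[D_{mm}\cU(t,sm_2+(1-s)m_1,x,y)\bigr]\,ds,
\]
so the $\bd_1$-bound follows once one has a bound on $\delta D_{mm}\cU/\delta m$ with a kernel that is uniformly bounded in $(x,y,y')$ (rather than just square-integrable). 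This uniform-in-$y,y'$ bound is exactly what the $C^{2+\delta}$ smoothness of the third linear derivatives of $\sF$ and $\sG$ buys us: the coefficients driving $(z,\mu)$ and their parametric derivatives are bounded in $L^\infty$ in the $y$-parameter, so the Schauder estimates on the linearized system yield $L^\infty$ bounds uniform in $y$, hence the $\bd_1$ Lipschitz estimate instead of the weaker $\bd_2$ one.

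The main obstacle I expect is the last step, namely obtaining the $\bd_1$ Lipschitz bound rather than a $\bd_2$ one. For $\bd_2$ regularity, energy estimates in $L^2(m)$-based spaces are enough; for $\bd_1$ one must work in $L^\infty$-type spaces and track how the Schauder norm of $z^{\delta_y-m_0}$ depends on $y$. This requires carefully propagating the $C^{2+\delta}$ regularity of the data (built into Assumption \ref{assump:gradients}) through both linearized systems, and using the invertibility of the linearized operator in $L^\infty$-based norms (using, e.g., duality against the forward equation with an integrable source, and the Aronson-type bounds on the adjoint semigroup). A secondary technical point is to ensure that the whole construction is uniform for $(t,m)$ in a $\dtwo$-neighborhood of $(t_0,m_0)$, which follows because the stability condition defining $\cO$ is an open condition, as already shown in \cite{CardSoug2022}.
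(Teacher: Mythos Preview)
Your plan is broadly on the right track, but it diverges from the paper's approach in one important respect and contains a minor slip.

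The paper does \emph{not} set up a Banach-space implicit function theorem, nor does it iterate to a ``second linearized system'' in order to obtain the $\bd_1$-Lipschitz bound. Instead it proceeds by direct construction and comparison. First, for each $y$ it solves the linearized system \eqref{system: LinearizedSystemNoError} with initial datum $\rho(t_0)=\delta_y$ (and $D\delta_y$), and defines $K(t_0,m_0,x,y)=w^{\delta_y}(t_0,x)$. The key technical ingredient is a refinement of the stability estimate from \cite{CardSoug2022} (Lemma~\ref{lem: MainEstimatesLinearizedSystem}) in which the initial datum is measured in $(C^{1+\delta})'$ rather than $(W^{1,\infty})'$; this is what allows $\delta_y$ and $D\delta_y$ as sources and yields bounds uniform in $y$. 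Second, to show $K=\frac{\delta\Phi}{\delta m}$, the paper writes $(u^2-u^1-z,\,m^2-m^1-\mu)$ as a solution of \eqref{system: LinearizedSystemWithError} with quadratic error terms and applies the stability estimate once. Third---and this is where your proposal differs most---for the $\bd_1$-Lipschitz bound on $D_{mm}\cU$ the paper does \emph{not} bound a third derivative $\frac{\delta}{\delta m}D_{mm}\cU$. It instead compares $K^{(l)}(t_0,m_0^2,\cdot,y)$ and $K^{(l)}(t_0,m_0^1,\cdot,y)$ directly: the difference $(z^2-z^1,\rho^2-\rho^1)$ of the two linearized systems solves \eqref{system: LinearizedSystemWithError} driven by $(u^1,m^1)$, with error terms $R^1,R^2,R^3$ controlled by $\bd_1(m_0^1,m_0^2)$ via the $\bd_1$-stability of MFG solutions (Lemma~\ref{lem: MainEstimatesFOrDifferenceOfSOlutions}). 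This avoids the extra differentiation you propose and uses only $\frac{\delta^2\sF}{\delta m^2}$, $\frac{\delta^2\sG}{\delta m^2}$ in the error terms. Your route via $\int_0^1\partial_s D_{mm}\cU\,ds$ could in principle work under Assumption~\ref{assump:gradients}, but it requires establishing existence and uniform boundedness of a third derivative, which is strictly more than needed.

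A small correction: your formula $D_{mm}\cU(t_0,m_0,x,y)=D_xDz^{\delta_y-m_0}(t_0,x)$ is missing a $y$-derivative; one has $D_{mm}\cU=D_xD_y\frac{\delta^2\cU}{\delta m^2}$, and the paper obtains the $y$-regularity by differentiating the initial datum $\delta_y\mapsto -D\delta_y$ and applying the stability estimate again. Also, the second linearized operator is the \emph{same} operator as the first (with different source terms), not its adjoint; your conclusion that stability suffices is correct, but for that reason rather than by duality.
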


\section{The proof of Theorem \ref{thm.main}} \label{sec:valuesconv}
In this section we present the proof of Theorem \ref{thm.main}. For simplicity, we fix throughout the section a $p > 2$. 
\vs
It will be useful to note that one of the inequalities in Theorem \ref{thm.main} is relatively easy. Indeed, under the smoothness assumptions on $\sF$, $\sG$, it is not difficult to show the following.

\begin{prop} \label{prop.easyinequality}
    There is a constant $C$ such that, for all $N \in \N$ and for each $(t,\bx) \in [0,T] \times (\R^d)^N$, 
    \begin{align*}
        \cV^N(t,\bx) \leq \cU(t,m_{\bx}^N) + C/N\;.
    \end{align*}
%    for all $N \in \N$ and for each $(t,\bx) \in [0,T] \times (\R^d)^N$.
\end{prop}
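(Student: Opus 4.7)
My plan is to apply a mean field optimal feedback coordinate-wise in the $N$-particle system, and to bound the resulting cost discrepancy by a direct $O(1/N)$ estimate that uses only the $\cC^2$ smoothness of $\sF$ and $\sG$. Concretely, fix $(t, \bx) \in [0, T] \times (\R^d)^N$, let $(m, \alpha)$ be an optimizer for the mean field problem with initial datum $(t, m_{\bx}^N)$, and realize $\alpha$ as the feedback $\alpha(s, x) = - D_p H(x, Du(s, x))$ furnished by the MFG system of Subsection~\ref{subsec:prelim}, where $u$ is the multiplier associated to $m$. This provides enough regularity to launch $N$ independent particles
\begin{equation*}
dX^i_s = \alpha(s, X^i_s) \, ds + \sqrt{2} \, dW^i_s, \qquad X^i_t = x^i, \quad i = 1, \ldots, N,
\end{equation*}
and to use $\alpha^i_s := \alpha(s, X^i_s)$ as an admissible competitor in the infimum defining $\cV^N(t, \bx)$.

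\textbf{Key steps.} The central observation is that, because $\alpha$ does not depend on the measure variable, the Fokker-Planck equation governing the marginal law $m^i_s$ of $X^i_s$ is linear in the measure, so the averaged law $\frac{1}{N} \sum_i m^i_s$ solves the same equation as $m$ with the same initial datum $m_{\bx}^N$; by uniqueness $\frac{1}{N} \sum_i m^i_s = m_s$ for every $s \in [t, T]$. Consequently, by Fubini,
\begin{equation*}
\E\!\left[ \frac{1}{N} \sum_{i=1}^N L\bigl(X^i_s, \alpha(s, X^i_s)\bigr) \right] = \int_{\R^d} L(x, \alpha(s, x)) \, m_s(dx),
\end{equation*}
so that the running $L$-contribution to the $N$-particle cost matches the corresponding mean field term exactly. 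It then remains to show $|\E[\sF(m_{\bX_s}^N)] - \sF(m_s)| \leq C/N$ and the analogous bound for $\sG$. For this I would Taylor-expand in the linear derivative: the first-order term vanishes in expectation since $\E[m_{\bX_s}^N] = m_s$ and $\frac{\delta \sF}{\delta m}(m_s, \cdot)$ is deterministic, and the second-order remainder is handled by decomposing $m_{\bX_s}^N - m_s = \frac{1}{N} \sum_i (\delta_{X^i_s} - m^i_s)$ and exploiting the independence of the $X^i$, so that only the $N$ diagonal terms survive in expectation and each contributes $O(1/N^2)$, giving an $O(1/N)$ bound with a constant depending only on $\|D^2_{mm} \sF\|_\infty$ and $\|D^2_{mm} \sG\|_\infty$. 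Summing the three contributions yields $\cV^N(t, \bx) \leq \cU(t, m_{\bx}^N) + C/N$.

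\textbf{Main obstacle.} The only subtle point, rather than a true obstruction, is controlling the second-order Taylor remainder for $\sF$ and $\sG$ when the intermediate measure appearing in the mean-value form is itself random, so that the independence-based cancellation is not completely immediate. A clean way around this is to expand the difference $\frac{\delta \sF}{\delta m}(m_\tau, \cdot) - \frac{\delta \sF}{\delta m}(m_s, \cdot)$ once more using the boundedness of $\frac{\delta^2 \sF}{\delta m^2}$, which reduces the analysis to a bilinear form integrated against $(m_{\bX_s}^N - m_s)^{\otimes 2}$ whose diagonal/off-diagonal splitting yields the desired $O(1/N)$ control. A secondary technical nuisance is verifying admissibility of $\alpha$ as a feedback on all of $\R^d$ and not merely on the support of $m$, but this is ensured by the $\cC^1$ regularity of the multiplier $u$ on $\R^d$ under Assumption~\ref{assump:values}.
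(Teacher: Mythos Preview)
Your control-theoretic approach is correct and is a genuine alternative to the route the paper takes. The paper omits the proof and points to \cite{ddj2023}, where the argument is PDE-based: one exploits the (global) displacement semi-concavity of $\cU$ to show that the projection $\cU^N(t,\bx)=\cU(t,m_{\bx}^N)$ is, up to an $O(1/N)$ error, a viscosity supersolution of \eqref{eq:hjbn}, and concludes by comparison. Your argument instead produces an explicit competitor for $\cV^N$ from the mean field optimizer and controls the cost gap directly. Both routes are standard; yours is arguably more elementary and makes the role of the $\cC^2$ hypotheses on $\sF,\sG$ more transparent, while the semi-concavity route is slightly more robust in that it does not rely on constructing an optimal feedback for every empirical initial datum.

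One point deserves more care than you give it. Your fix for the random-kernel remainder (``expand the difference once more using the boundedness of $\frac{\delta^2\sF}{\delta m^2}$'') effectively asks for a further derivative of $\sF$, which is not available under Assumption~\ref{assump:values} (only $D_{mm}\sF$ is assumed bounded, not $\frac{\delta^2\sF}{\delta m^2}$ itself or its $m$-derivative). The clean way to close the gap under the stated hypotheses is a telescoping argument: with $m^i_s=\mathrm{Law}(X^i_s)$ and $\mu^{(k)}=\frac1N\big(\sum_{i\le k}\delta_{X^i_s}+\sum_{i>k}m^i_s\big)$, one has $\mu^{(0)}=m_s$, $\mu^{(N)}=m_{\bX_s}^N$, and each increment $\E\big[\phi(\mu^{(k)})-\phi(\mu^{(k-1)})\big]$ is $O(\sigma^2/N^2)$ using only boundedness of $D_{mm}\phi$ together with the uniform bound $\mathrm{Var}(X^i_s)\le\sigma^2$. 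The latter holds here because each $X^i$ starts from a Dirac and is driven by a uniformly bounded drift plus Brownian motion, a feature of your construction that you should make explicit since without it the estimate can fail for general independent (non-i.i.d.) samples.
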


\begin{proof}
   We omit the proof, since it is almost identical the one of  the first inequality in Theorem 2.7 of \cite{ddj2023}. % and so is omitted. 
\end{proof}

\subsection{Tubes around optimal trajectories}
\label{subsec:tubes}

We now introduce some notation which will be useful in the proof of Theorem \ref{thm.main}. 
\vs

Given $(t_0,m_0) \in \cO$, we denote by $t \mapsto m_t^{(t_0,m_0)}$ the unique optimal trajectory for the limiting McKean-Vlasov control problem started from $(t_0,m_0)$. For simplicity, we extend $m^{(t_0,m_0)}$ by a constant to $[0,t_0]$, that is,  we define $m_t^{(t_0,m_0)} = m_0$ for $0 \leq t < t_0$. 
\vs
For $r > 0$, $\cT_{r}(t_0,m_0)$ is an open tube around the optimal trajectory started from $(t_0,m_0)$, that is, 
\begin{align*}
\cT_{r}(t_0,m_0) = \Big\{(t,m) : t \in (t_0 - r, T] \cap [0,T], \,\, \dtwo(m,m^{(t_0,m_0)}_t) < r \Big\}.
\end{align*}
%So, $\cT_{r}(t_0,m_0)$ is an open tube around the optimal trajectory started from $(t_0,m_0)$. 
When $(t_0,m_0)$ is understood from context, we write simply $\cT_{r}$. 
\vs
Because we are working in the whole space, we will often have to intersect the ``tubes" $\cT_{r}$ with bounded subsets of the Wasserstein space $\cP_p$. To facilitate this, we set, for $R > 0$, 
\begin{align*}
    \cQ_R = [0,T] \times B_R^p, 
\end{align*}
where $B_R^p$ is the ball of radius $R$ in $\cP_p$, centered at $\delta_0$, and  use the notation
\begin{align*}
    \cT_{r,R}(t_0,m_0) = \cT_{r}(t_0,m_0) \cap \cQ_R.
\end{align*}

We also need to project these sets down to finite-dimensional spaces. In particular, we will write $\cO^N = \{(t,\bx) \in [0,T] \times (\R^d)^N : (t,m_{\bx}^N) \in \cO\}$, and likewise we set
\begin{align*}
    \cT^N_{r,R}(t_0,m_0) = \Big\{(t,\bx) \in [0,T] \times (\R^d)^N : (t,m_{\bx}^N) \in \cT_{r,R}(t_0,m_0) \Big\}.
\end{align*}
Finally, $\bX^{N,t,\bx} = (X^{N,t,\bx,i})_{i = 1,...,N}$ is the optimal trajectory for the $N$-particle control problem started from $(t,\bx) \in [0,T] \times (\R^d)^N$, that is,  the solution of 
\begin{align}\label{eq: OptimalDynamicsForNParticle}
   \begin{cases} \displaystyle
       dX_s^{N,t,\bx,i} = - D_p H\big(X_s^{N,t,\bx,i}, N D_{x^i} \cV^N(s, \bX_s^{N,t,\bx})\big) ds + \sqrt{2} dW_s^i \ \ \text{in} \ \  [s,T] \vspace{.2cm}, \\
     X_t^{N,t,\bx,i} = x^i.
   \end{cases} 
\end{align}

To prove Theorem \ref{thm.main}, it will suffice to establish the following Proposition.

\begin{prop} \label{prop.tube}

Let $(t_0,m_0) \in \cO$ and $R > 0$. Then there exist $r, C > 0$ such that, for each $N \in \N$ and each $(t,\bx) \in \cT^N_{r,R}(t_0,m_0)$, 
\begin{align*}
    |\cV^N(t,\bx) - \cU(t,m_{\bx}^N)| \leq C/N.
\end{align*}
    
\end{prop}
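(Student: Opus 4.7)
The upper bound $\cV^N(t,\bx) \leq \cU(t,m_{\bx}^N) + C/N$ is free from Proposition \ref{prop.easyinequality}, so the whole task is to establish the matching lower bound on a sufficiently small tube. I would follow exactly the shrinking-tube iteration scheme sketched in the introduction. Fix $(t_0, m_0) \in \cO$ and $R > 0$. Denote by $m^{(t_0,m_0)}$ the unique optimal trajectory, set $\cU^N(t,\bx) := \cU(t, m_{\bx}^N)$, and write $\tau$ for the exit time of $s \mapsto (s, m_{\bX^{N,t,\bx}_s}^N)$ from a tube.

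The first ingredient is a \emph{near-solution} property: for $r$ small enough, $\cT_r(t_0,m_0)$ is contained in $\cO$, so by the regularity results of \cite{CardSoug2022} (and Theorem \ref{thm.c2} if needed for $\cC^2$), $\cU$ is a classical solution of \eqref{eq:hjbinf} on $\cT_r(t_0,m_0)$. Lifting to the $N$-particle level by computing $\partial_t$, $D_{x^i}$, and $\Delta_{x^i}$ of $\cU(t, m_{\bx}^N)$, the difference between the symmetric sum over particles and the integral against $m_{\bx}^N$ produces only an $O(1/N)$ error in \eqref{eq:hjbn} on $\cT_r^N \cap \cQ_R^N$.

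The second ingredient is a \emph{verification argument}. Applying Itô's formula to $s \mapsto \cU^N(s, \bX_s^{N,t,\bx})$ on $[t, \tau \wedge T]$, using optimality of $\bX^{N,t,\bx}$ for $\cV^N$ and the near-solution property for $\cU^N$, one obtains, for $(t,\bx) \in \cT_{r_1,R}^N$ with $r_1 < r$,
\[
\cU^N(t,\bx) - \cV^N(t,\bx) \leq C/N + \bE\Big[\big(\cU^N - \cV^N\big)(\tau, \bX_\tau^{N,t,\bx})\, \bone_{\{\tau < T\}}\Big].
\]
On the event that $\bX_\tau^{N,t,\bx}$ still lies in a slightly enlarged Wasserstein ball $B_{R'}^p$, the integrand is bounded by $\sup_{\cT_{r,R'}^N}(\cU^N - \cV^N)$; on the complement, standard Lipschitz/moment estimates for $\cU$ and $\cV^N$ together with a crude moment bound for $\bX^{N,t,\bx}$ give a contribution that is summable into the $C/N$ error.

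The third and decisive ingredient is the \emph{exit estimate} (Lemma \ref{lem.key} in the paper): for $0 < r_1 \ll r_2 \ll 1$ and $R' > R$, there exists $\gamma > 0$, independent of $r_1, r_2$, such that uniformly in $(t,\bx) \in \cT_{r_1,R}^N$,
\[
\bP\big[\tau_{r_2} < T \big] \leq C(R, R')\, N^{-\gamma},
\]
where $\tau_{r_2}$ is the exit time of $\cT_{r_2}$. This I expect to be the main obstacle: it combines the global algebraic rate \eqref{eq:cdjsrate} (to show that the $N$-particle optimum is close in value to the mean-field optimum), a Fournier--Guillin concentration bound for empirical measures in $\bd_2$ (which is where compactness in $\cP_p$ for $p > 2$ enters, to control the $p$-moments needed to quantify Wasserstein convergence for initial data), and the stability characterization of points in $\cO$ (to transfer closeness in value into closeness of trajectories in $\bd_2$, via a non-symmetric version of the propagation-of-chaos argument from \cite{CardSoug2022}).

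Combining the second and third ingredients yields the recursive inequality
\[
\sup_{\cT_{r_1,R}^N}\big(\cU^N - \cV^N\big) \leq C/N + C\, N^{-\gamma} \sup_{\cT_{r_2,R'}^N}\big(\cU^N - \cV^N\big).
\]
Starting from the global estimate \eqref{eq:cdjsrate} (so that the sup on the outermost tube is $O(N^{-\beta_d})$ times a moment factor controlled on $\cQ_{R'}^N$) and iterating along a finite nested sequence of radii $r^{(0)} > r^{(1)} > \cdots > r^{(k)}$, after $k$ steps the bound on $\cT_{r^{(k)},R}^N$ is $C/N + C\, N^{-\beta_d - k\gamma}$. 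Choosing $k \geq \lceil (1 - \beta_d)/\gamma \rceil$ and setting $r = r^{(k)}$ gives Proposition \ref{prop.tube}; the compactness of $K$ in $\cP_p$ in Theorem \ref{thm.main} is then handled by covering $K$ by finitely many such tubes $\cT_{r,R}(t_0,m_0)$.
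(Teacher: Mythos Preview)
Your proposal is correct and follows essentially the same shrinking-tube iteration as the paper: Lemma \ref{lem:uneqn} for the near-solution property, Lemma \ref{lem: ImprovedRateTubes} for the verification step, Lemma \ref{lem.key} for the exit estimate, and then iterating against the global rate \eqref{eq:cdjsrate}. The only minor difference is that the paper incorporates the $\cP_p$-ball constraint directly into the tube $\cT_{r,R}^N$ and lets $R$ grow along the iteration (using Lemma \ref{lem:lpnorm} inside Lemma \ref{lem.key} to control the exit from $B_{R_2}^p$ with probability $\leq C_p/N$), rather than splitting off the ball-exit event in the verification step as you suggest; the paper's bookkeeping is a bit cleaner here, but your variant would also work.
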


Indeed, once Proposition \ref{prop.tube} is proved we can complete the proof of Theorem \ref{thm.main} as follows:
\begin{proof}[Proof of Theorem \ref{thm.main}]
    Fix $(t_0,m_0) \in \cO \cap \big( [0,T] \times \cP_p \big)$, and choose $R$ large enough that $(t_0,m_0) \in \cQ_R$. Thanks to Proposition \ref{prop.tube}, there exist $r, C > 0$ such that,   for all $(t,\bx)$ such that 
%    \begin{align*}
        $(t,m_{\bx}^N) \in \cT_{r,R}(t_0,m_0),$
%    \end{align*}
    \begin{align} \label{rateomega}
        |\cV(t,\bx) - \cU(t,m_{\bx}^N)| \leq C/N .
    \end{align}
%    for all $(t,\bx)$ such that 
%    \begin{align*}
%        (t,m_{\bx}^N) \in \cT_{r,R}(t_0,m_0).
%    \end{align*}
    In particular,  for each $(t_0,m_0) \in \cO \cap \big( [0,T] \times \cP_p \big)$, there exists a subset of $\cO$, which is open in $\cP_p$ and contains $(t_0,m_0)$, on which the convergence rate is $1/N$, in the sense of
    \eqref{rateomega}. This completes the proof.
\end{proof}

\subsection{The proof of Proposition \ref{prop.tube}}

We give here the proof of Proposition \ref{prop.tube}. We start by recording a few preliminary facts about the tubes $\cT_{r}(t_0,m_0)$ defined above.

\begin{lem} \label{lem.tubefacts}
    Given $(t_0,m_0) \in \cO$, there exists $r_0 > 0$ such that $\cT_{r_0}(t_0,m_0) \subset \cO$. Moreover, for any $r_2 > 0$, there exists $0 < r_1 < r_2$ such that optimal optimal trajectories from within $\cT_{r_1}(t_0,m_0)$ remain in $\cT_{r_2}(t_0,m_0)$, that is,  for each $(t,m) \in \cT_{r_1}(t_0,m_0)$ and each $t \leq s \leq T$, 
    \begin{align*}
        \bd_2\big(m_{s}^{(t,m)}, m^{(t_0,m_0)}_s \big) < r_2.
    \end{align*}
\end{lem}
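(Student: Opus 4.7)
The argument rests on two facts about $\cO$ established in \cite{CardSoug2022}: $\cO$ is open in $[0,T] \times \cP_2$, and $\cO$ is invariant under optimal trajectories, meaning $(s, m_s^{(t_0,m_0)}) \in \cO$ for every $s \in [t_0, T]$ whenever $(t_0, m_0) \in \cO$ (cf.\ Remark \ref{rmk:lp}).

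For the first assertion, fix $(t_0, m_0) \in \cO$. By invariance, the continuous curve $s \mapsto (s, m_s^{(t_0, m_0)})$ on $[t_0, T]$ has image contained in the open set $\cO$; since the image is compact in $[0,T] \times \cP_2$, a standard compactness-openness argument produces $\rho > 0$ such that the set $\{(s, m) : s \in [t_0, T],\ \bd_2(m, m_s^{(t_0, m_0)}) < \rho\}$ lies in $\cO$. Openness of $\cO$ at $(t_0, m_0)$ furnishes an additional $\varepsilon > 0$ with $\{(t, m) : t_0 - \varepsilon < t \leq t_0,\ \bd_2(m, m_0) < \varepsilon\} \subset \cO$. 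Taking $r_0 = \min(\rho, \varepsilon)$ and recalling the convention $m_t^{(t_0, m_0)} = m_0$ for $t \leq t_0$ yields $\cT_{r_0}(t_0, m_0) \subset \cO$.

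For the second assertion, extend every optimal trajectory $m^{(t, m)}$ to the whole of $[0, T]$ by setting it equal to $m$ for $s < t$. The strategy is to establish continuity of the \emph{optimal trajectory map} $\Phi : (t, m) \mapsto [s \mapsto m_s^{(t, m)}]$ from a neighborhood of $(t_0, m_0)$ in $\cO$ into $C([0, T]; (\cP_2, \bd_2))$. Given this continuity, a direct $\varepsilon$-$\delta$ argument produces $r_1 \in (0, r_2)$, handling both the regime $s \geq t_0$ (where both trajectories evolve under the Fokker-Planck flow) and the regime $s \in [t, t_0]$ (where $m_s^{(t_0, m_0)} = m_0$ and the small-time continuity of $m^{(t,m)}$ together with $\bd_2(m, m_0) < r_1$ suffices). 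To prove continuity of $\Phi$ I would use a compactness-uniqueness argument on the MFG system characterizing optimality: given a sequence $(t_n, m_n) \to (t_0, m_0)$ in $\cO$, uniform a priori bounds on the minimizing pairs $(m^{(t_n, m_n)}, u^{(t_n, m_n)})$ allow extraction of a subsequential limit, which must coincide with the unique stable solution at $(t_0, m_0)$ by the defining property of $\cO$, forcing full convergence.

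The main obstacle is justifying both the compactness step and the identification of the limit in the continuity proof for $\Phi$. The uniform bounds needed for compactness (e.g., semiconcavity of the multiplier and Lipschitz regularity of the trajectory) are essentially available from the estimates developed in \cite{CardSoug2022} in the course of constructing $\cO$, while the identification of the limit uses precisely the stability criterion from \cite[Definition 2.5]{CardSoug2022}. Alternatively, applying an implicit-function-type argument to the MFG system linearized around $(m^{(t_0, m_0)}, u^{(t_0, m_0)})$, where the stability condition provides invertibility of the linearization, gives local Lipschitz dependence of $\Phi$ directly, which is more than enough for the lemma.
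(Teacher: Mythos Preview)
Your proposal is essentially correct and close in spirit to the paper's argument, but there is one point worth flagging and one difference in execution.

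\textbf{Comparison with the paper.} The paper's proof is a two-line citation: the first claim follows from openness of $\cO$ (\cite[Theorem 2.8]{CardSoug2022}), and the second is deduced from the quantitative stability estimate \cite[Lemma 2.9]{CardSoug2022} together with the uniform $1/2$-H\"older continuity of optimal trajectories in $\cP_2$. In other words, the paper invokes the already-proved local Lipschitz dependence of $m^{(t,m)}$ on the initial data rather than rederiving continuity via a compactness--uniqueness argument. Your ``alternative'' implicit-function route is exactly this Lemma 2.9, so your second option coincides with the paper's; your compactness--uniqueness option is a softer but valid substitute. What each buys: the quantitative route gives a Lipschitz modulus (useful elsewhere in the paper), while compactness--uniqueness is more self-contained but yields only continuity.

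\textbf{One imprecision.} For the second claim you phrase the key step as continuity of $\Phi$ ``from a neighborhood of $(t_0,m_0)$''. That is not quite enough: when $(t,m)\in\cT_{r_1}$ with $t$ well above $t_0$, the point $(t,m)$ is close to $(t,m_t^{(t_0,m_0)})$, not to $(t_0,m_0)$. The argument needs continuity of $\Phi$ at every point $(s,m_s^{(t_0,m_0)})$ along the trajectory, combined with the flow property $m_\cdot^{(t,\,m_t^{(t_0,m_0)})}=m_\cdot^{(t_0,m_0)}$ on $[t,T]$, and a compactness covering to make the modulus uniform in $t\in[t_0,T]$. Your methods do yield continuity at every point of $\cO$, so the fix is routine, but the written plan should reflect that the relevant neighborhood is a tube around the entire optimal trajectory, not just a ball at $(t_0,m_0)$. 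The paper's use of the $1/2$-H\"older bound plays exactly the role of your ``small-time continuity'' step in the regime $t<t_0$.
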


\begin{proof}
    The first claim is a consequence of the fact that $\cO$ is open, as proved in \cite[Theorem 2.8]{CardSoug2022}. The second one can be inferred from \cite[Lemma 2.9]{CardSoug2022} together with the uniform 1/2-H\"older continuity of the optimal trajectories $s \mapsto m^{(t,m)}_s \in \cP_2$, which is in turn a consequence of the uniform boundedness of optimal controls (see Lemma 3.3 of \cite{cardaliaguet2023algebraic}).  
\end{proof}

The next task is to use the regularity of $\cU$ in $\cO$ to argue that the function
\begin{align} \label{undef}
    \cU^N(t,\bx) = \cU(t,m_{\bx}^N) : [0,T] \times (\R^d)^N \to \R
\end{align}
satisfies a PDE similar to \eqref{eq:hjbn}. First, notice that, if we assume that $\cU$ is $\cC^{1,2}$, then $cU^N$ is a classical solution in $\cO^N$ to 
\begin{align} \label{eq:error}
    - \partial_t \cU^N - \sum_{j = 1}^N \Delta_{x^j} \cU^N + \frac{1}{N} \sum_{j = 1}^N H(x^j, ND_{x^j} \cU^N) = \sF(m_{\bx}^N) + E^N(t,\bx)
\end{align}
%in a classical sense in $\cO^N$, 
where 
\begin{align} \label{def:error}
    E^N(t,\bx) = - \frac{1}{N^2} \sum_{j = 1}^N \tr\big(D_{mm} \cU(t,m_{\bx}^N, x^j,x^j) \big).
\end{align}
Under Assumption \ref{assump:values}, the main results of \cite{CardSoug2022} show that $\cU$ is $\cC^1$, but not necessarily $\cC^2$, so we cannot immediately conclude that $\cU^N$ satisfies \eqref{eq:error}. Nevertheless, Proposition \ref{prop.dmlip} shows that, uniformly in $x$ and locally uniformly in $(t,m)$ with respect to $\dtwo$,   $m \mapsto D_m U(t,m,x)$ is $\bd_1$-Lipschitz. %uniformly in $x$, locally uniformly in $(t,m)$ with respect to $\dtwo$. 
In particular, for each $(t_0,m_0) \in \cO$, it is clear that we can choose $r$ small enough so that $m \mapsto D_m \cU(t,m,x)$ is Lipschitz with respect to $\bd_1$ uniformly over $\cT_{r}(t_0,m_0)$. It follows that
$\cU^N$ is $\cC^1$ on $\cT_{r}(t_0,m_0)$ with each partial derivative
\begin{align*}
    D_{x^i}\cU^N(t,\bx) = \frac{1}{N} D_m \cU(t,m_{\bx}^N, x^i)
\end{align*}
being uniformly Lipschitz in $\bx$. Moreover, arguing as in the proof of \cite[Proposition 5.1]{ddj2023}, one can show that the equation \eqref{eq:error} holds almost everywhere on $\cT^N_{r}(t_0,m_0)$, with an error term $E^N$ satisfying 
\begin{align} \label{errorbound}
    \|E^N\|_{\linf(\cT_{r})} \leq C/N.
\end{align}

We record this sequence of observations in the following lemma.

\begin{lem} \label{lem:uneqn}
For any $(t_0,m_0) \in \cO$, there exists $r > 0$ such that, for each $N \in \N$, the projection $\cU^N$ defined by \eqref{undef} lies in $\cC^1\big(\cT^N_{r}(t_0,m_0) \big)$, with spatial derivatives $D_{x^i} \cU^N$ being Lipschitz continuous in $\bx$, and such that \eqref{eq:error} holds almost everywhere, with the error function 
\begin{align}
    E^N \coloneqq  - \partial_t \cU^N - \sum_{j = 1}^N \Delta_{x^j} \cU^N + \frac{1}{N} \sum_{j = 1}^N H(x^j, ND_{x^j} \cU^N) - \sF(m_{\bx}^N)
\end{align}
satisfying the estimate \eqref{errorbound}.
\end{lem}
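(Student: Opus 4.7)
The plan is to leverage the $\cC^1$ regularity of $\cU$ on $\cO$ together with the $\bd_1$-Lipschitz bound on $D_m \cU$ provided by Proposition \ref{prop.dmlip}, and then derive the $N$-particle PDE by the chain rule on the Wasserstein space, reading off the error term as a second-order diagonal contribution.

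First I would localize: by Lemma \ref{lem.tubefacts} there is some $r_0 > 0$ with $\cT_{r_0}(t_0,m_0) \subset \cO$, and by shrinking further to some $r \in (0,r_0]$ I obtain via Proposition \ref{prop.dmlip} a constant $L > 0$ such that
\[
\sup_{x \in \R^d} |D_m \cU(t,m_1,x) - D_m \cU(t,m_2,x)| \leq L\, \bd_1(m_1,m_2)
\]
for all $(t,m_1),(t,m_2) \in \cT_r(t_0,m_0)$, while simultaneously $\partial_t \cU$, $D_m \cU$, and $D_x D_m \cU$ are continuous and bounded on this tube. I would then transfer this to $\cU^N$ via the standard Wasserstein chain rule, which yields $\partial_t \cU^N(t,\bx) = \partial_t \cU(t,m_{\bx}^N)$ and $D_{x^i} \cU^N(t,\bx) = \frac{1}{N} D_m \cU(t,m_{\bx}^N, x^i)$. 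The Lipschitz property of the spatial gradient in $\bx$ follows from combining the uniform Lipschitz continuity of $x \mapsto D_m \cU(t,m,x)$ (boundedness of $D_x D_m \cU$) with the $\bd_1$-Lipschitz bound above and the elementary estimate $\bd_1(m_{\bx}^N, m_{\by}^N) \leq \frac{1}{N}\sum_j |x^j - y^j|$.

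For the PDE itself, I would evaluate \eqref{eq:hjbinf} at $m = m_{\bx}^N$ for $(t,\bx) \in \cT^N_r$, rewriting the empirical-measure integrals as $\frac{1}{N}\sum_{j=1}^N$-sums:
\[
-\partial_t \cU^N - \frac{1}{N}\sum_{j=1}^N \tr(D_x D_m \cU)(t,m_{\bx}^N,x^j) + \frac{1}{N}\sum_{j=1}^N H(x^j, N D_{x^j}\cU^N) = \sF(m_{\bx}^N).
\]
At almost every $(t,\bx) \in \cT^N_r$, Rademacher's theorem applied to the Lipschitz vector field $\bx \mapsto D_{x^i} \cU^N$ ensures the second partial derivatives $\partial_{x^i_k}^2 \cU^N$ exist, and the computation in \cite[Proposition 5.1]{ddj2023} gives the chain-rule identity
\[
\Delta_{x^i} \cU^N = \frac{1}{N} \tr\bigl(D_x D_m \cU\bigr)(t,m_{\bx}^N,x^i) + \frac{1}{N^2} \tr\bigl(D_{mm} \cU\bigr)(t,m_{\bx}^N,x^i,x^i),
\]
where the diagonal second-derivative term is to be interpreted through finite differences. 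Summing over $i$ and substituting into the displayed identity produces \eqref{eq:error} with $E^N$ as in \eqref{def:error}.

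The main technical hurdle is the bound \eqref{errorbound}, since I do not have $\cC^2$ regularity of $\cU$ and so cannot appeal to a pointwise bound on $|D_{mm}\cU|$ directly. I would overcome this by converting the $\bd_1$-Lipschitz estimate into a uniform finite-difference bound: perturbing $x^i \mapsto x^i + h e_k$ moves $m_{\bx}^N$ by at most $h/N$ in $\bd_1$, so the difference quotient of $N D_{x^i} \cU^N = D_m \cU(t,m_{\bx}^N,x^i)$ in the $x^i_k$-direction is bounded by $L + \|D_x D_m \cU\|_{L^\infty(\cT_r)}$ independently of $N$. Passing to the limit at any point of differentiability yields $|D_{mm} \cU(t,m_{\bx}^N,x^i,x^i)| \leq C$ a.e., whence $\|E^N\|_{L^\infty(\cT_r)} \leq C/N$ as required.
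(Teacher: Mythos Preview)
Your approach is essentially the same as the paper's: localize via Lemma~\ref{lem.tubefacts}, invoke Proposition~\ref{prop.dmlip} for the $\bd_1$-Lipschitz bound on $D_m\cU$, deduce that $\cU^N$ is $\cC^1$ with Lipschitz spatial gradient, and then follow the computation of \cite[Proposition~5.1]{ddj2023} to obtain \eqref{eq:error} almost everywhere with the claimed error bound.

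There is, however, a bookkeeping slip in your last paragraph that you should fix. Perturbing $x^i\mapsto x^i+he_k$ moves $m_{\bx}^N$ by at most $h/N$ in $\bd_1$, so the \emph{measure-variation part} of the difference quotient of $D_m\cU(t,m_{\bx}^N,x^i)$ is bounded by $L/N$, not $L$; the full difference quotient is bounded by $L/N+\|D_xD_m\cU\|_{L^\infty}$. If you merely bound the full limit $\frac{1}{N}D_{mm}\cU+D_xD_m\cU$ by a constant and then subtract off $D_xD_m\cU$, you only get $|D_{mm}\cU|\leq CN$, which is useless. The correct extraction is to isolate the measure-variation contribution \emph{before} passing to the limit: that piece alone is bounded by $L/N$ and converges (at points of differentiability) to $\frac{1}{N}D_{mm}\cU(t,m_{\bx}^N,x^i,x^i)e_k$, which gives $|D_{mm}\cU|\leq L$ and hence $\|E^N\|_{L^\infty(\cT_r)}\leq C/N$. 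This is precisely the mechanism behind \cite[Proposition~5.1]{ddj2023}; once you make this separation explicit, the argument is complete.
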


We now establish a sequence of technical lemmas. The first one explains how to estimate the probability that the empirical measure associated with an optimal trajectory for $\cV^N$ grows quickly in $\cP_p$.

\begin{lem} \label{lem:lpnorm}
    There is a constant $C_p$ depending on $p$ and the data with the following property: for each $N \in \N$ and  $(t_0,\bx_0) \in [0,T] \times (\R^d)^N$, 
    \begin{align}
        \bP\Big[ \sup_{t_0 \leq t \leq T} \dpee(m_{\bX_t^{N,t_0,\bx_0}}, m_{\bx_0}^N) \geq C_p \Big] \leq C_p/N. \label{cpdef}
    \end{align}
\end{lem}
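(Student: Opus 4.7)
The plan is to exploit the uniform $L^\infty$ bound on optimal feedback controls for the $N$-particle problem in order to dominate each particle displacement by a functional of its own independent Brownian motion, reducing the claim to a Chebyshev-type concentration statement for i.i.d.\ random variables.

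First, I invoke the uniform bound on optimal controls from \cite[Lemma 3.3]{cardaliaguet2023algebraic}: there exists a constant $C > 0$ independent of $N$, $t_0$, and $\bx_0$ such that the feedback drift appearing in \eqref{eq: OptimalDynamicsForNParticle} satisfies $|D_p H(X_s^{N,t_0,\bx_0,i}, N D_{x^i} \cV^N(s, \bX_s^{N,t_0,\bx_0}))| \leq C$ almost surely. Integrating the SDE then gives, for each $i \in \{1,\ldots,N\}$,
\begin{equation*}
\sup_{s \in [t_0,T]} \bigl|X_s^{N,t_0,\bx_0,i} - x_0^i\bigr| \;\leq\; CT + \sqrt{2}\, \sup_{s \in [t_0,T]} |W_s^i - W_{t_0}^i| \;=:\; Z^i .
\end{equation*}
The crucial point is that the $Z^i$ are i.i.d.\ (they depend only on the independent Brownian motions $W^i$), and since the running maximum of $W^i - W_{t_0}^i$ is sub-Gaussian, every moment of $Z^1$ is finite.

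Using the diagonal coupling of $m^N_{\bX_t^{N,t_0,\bx_0}}$ with $m^N_{\bx_0}$ that pairs $X_t^{N,t_0,\bx_0,i}$ with $x_0^i$, I obtain the almost sure bound
\begin{equation*}
\sup_{t \in [t_0,T]} \dpee\bigl(m^N_{\bX_t^{N,t_0,\bx_0}}, m^N_{\bx_0}\bigr)^p \;\leq\; \frac{1}{N} \sum_{i=1}^N \sup_{s \in [t_0,T]} \bigl|X_s^{N,t_0,\bx_0,i} - x_0^i\bigr|^p \;\leq\; \frac{1}{N} \sum_{i=1}^N (Z^i)^p .
\end{equation*}
Setting $\mu := \bE[(Z^1)^p]$ and $\sigma^2 := \Var((Z^1)^p)$, the independence of the $(Z^i)^p$ yields $\Var\bigl(N^{-1} \sum_i (Z^i)^p\bigr) = \sigma^2/N$, and so Chebyshev's inequality gives
\begin{equation*}
\bP\!\left[\frac{1}{N}\sum_{i=1}^N (Z^i)^p \geq 2\mu\right] \;\leq\; \frac{\sigma^2}{N \mu^2} .
\end{equation*}
Choosing $C_p := \max\{(2\mu)^{1/p},\, \sigma^2/\mu^2\}$, which depends only on $p$, $T$, and the data through $C$, simultaneously ensures $C_p^p \geq 2\mu$ and $\sigma^2/\mu^2 \leq C_p$, and so closes the estimate.

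The only nontrivial input in this argument is the uniform-in-$N$ bound on optimal controls from \cite{cardaliaguet2023algebraic}; once that is in hand, the proof is a standard concentration estimate for i.i.d.\ random variables, and I anticipate no real obstacle. I note that no appeal to the region of strong regularity $\cO$ nor to any regularity of $\cU$ is needed here---this lemma is a purely dynamical statement about the $N$-particle optimal trajectories on the bounded time horizon $[t_0,T]$.
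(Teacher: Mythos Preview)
Your proposal is correct and follows essentially the same approach as the paper: both use the uniform-in-$N$ bound on the optimal controls to dominate each particle displacement by a constant plus the running maximum of its own Brownian motion, apply the diagonal coupling to bound $\dpee^p$ by an average of i.i.d.\ random variables, and conclude via Chebyshev. The paper cites \cite[Lemma 1.7]{CardSoug2022} rather than \cite[Lemma 3.3]{cardaliaguet2023algebraic} for the control bound, and is somewhat less explicit about the final Chebyshev step, but the argument is the same.
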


\begin{proof}
   For simplicity, we set $\bX = (X^1,...,X^N) = \bX^{N,t_0,x_0}$, and let $\bm \alpha = (\alpha^1,...,\alpha^N)$ denote the optimal control started from $(t_0,x_0)$. Then, for each fixed $t \in [t_0,T]$, we have 
   \begin{align*}
       \bd_p^p(m_{\bX_t}^N, m_{\bx_0}^N) &\leq \frac{1}{N} \sum_{i = 1}^N |X_t^i - x_0^i|^p \leq \frac{1}{N} \sum_{i = 1}^N \big|\int_{t_0}^t \alpha_s^ids + W_t^i - W_{t_0}^i \big|^p .
   \end{align*}
   Using the fact that $\alpha^i$ is bounded independently of $N$ (see \cite[Lemma 1.7]{CardSoug2022}), we find easily that 
   \begin{align*}
       \sup_{t_0 \leq t \leq T} \bd_p^p(m_{\bX_t}^N, m_{\bx_0}^N) \leq C \Big( 1 + \frac{1}{N} \sum_{i = 1}^N \sup_{t_0 \leq t \leq T} |W_t^i - W_{t_0}^i|^p \Big).
   \end{align*}
   The result now follows easily from Chebyshev's inequality.
   
\end{proof}

The next lemma shows that the probability of the empirical measure exiting a tube decays algebraically, with a constant that is uniform over a smaller tube or, more precisely, uniform over the intersection of the smaller tube with a ball in $\cP_p$.

\begin{lem} \label{lem.key}
There exists a constant $\gamma_{p,d} \in (0,1)$, which depends only on $p$ and $d$, with the following property. Suppose that $(t_0,m_0) \in \cO \cap \big([0,T] \times \cP_p \big)$,  $0 < r_1 < r < r_2$ and $R_1,R_2 > 0$  are such that 
    \begin{enumerate}
    \item $R_2 - R_1 \geq C_p$, $C_p$ being the constant appearing in \eqref{cpdef}.
        \item $r_2$ is small enough so that $\ov{\cT_{r_2}(t_0,m_0)} \subset \cO$, and the conclusion of Lemma \ref{lem:uneqn} applies on $\cT_{r_2}(t_0,m_0)$.
        \item optimal trajectories started from inside $\cT_{r_1}(t_0,m_0)$ remain in $\cT_{r}(t_0,m_0)$, that is, 
        \begin{align*}
            \sup_{t \leq s \leq T} \dtwo\Big( m^{(t,m)}_s, m^{(t_0,m_0)}_s \Big) < r \ \  \text{for all } \ \  (t,m) \in \cT_{r_1}(t_0,m_0). 
        \end{align*}
    \end{enumerate}
%    Assume  further that $R_1,R_2 > 0$ are constants such that $R_2 - R_1 \geq C_p$, $C_p$ being the constant appearing in \eqref{cpdef}. 
Then there exists a constant $C$, which is independent of $N$, such that, for all  $N \in \N$ and $ (t,\bx) \in \cT_{r_1,R_1}^N$,
    \begin{align*}
        \bP[\tau^{N,t,\bx} < T] \leq C N^{- \gamma_{p,d} }, %\quad \text{for all } N \in \N, \quad \text{for all } (t,\bx) \in \cT_{r_1,R_1}^N,
    \end{align*}
    where 
%    \vskip-.1in
%    \begin{align*}
        $\tau^{N,t,\bx} = \inf \Big\{ s > t : (s, \bX_s^{N,t,\bx}) \in \big(\cT_{r_2,R_2}^N\big)^c \Big\} \wedge T.$
%    \end{align*}
\end{lem}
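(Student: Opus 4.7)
I decompose the event $\{\tau^{N,t,\bx} < T\}$ according to whether $s \mapsto m_{\bX^{N,t,\bx}_s}^N$ first leaves the $\cP_p$-ball $B_{R_2}^p$ or the $\dtwo$-tube $\cT_{r_2}(t_0,m_0)$. The first event has probability $\leq C/N$ by Lemma \ref{lem:lpnorm} together with the hypothesis $R_2 - R_1 \geq C_p$, which is much better than the claimed rate. The substantive task is to bound the probability of the tube-boundary exit, which I approach by an ``asymmetric'' propagation-of-chaos argument in the spirit of \cite{CardSoug2022}.

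To that end, introduce the non-interacting auxiliary system $\bY^N = (Y^{N,i})_{i \leq N}$ whose particles follow the mean-field optimal feedback along the \emph{limiting} optimal trajectory $m^{(t,m_{\bx}^N)}_\cdot$:
\[
dY^{N,i}_s = -D_pH\!\big(Y^{N,i}_s,\, D_m\cU(s, m^{(t,m_{\bx}^N)}_s, Y^{N,i}_s)\big)\,ds + \sqrt{2}\,dW^i_s, \qquad Y^{N,i}_t = x^i.
\]
Assumption (3) ensures $m^{(t,m_{\bx}^N)}_\cdot$ remains in $\cT_r(t_0,m_0) \subset \cO$, so the drift is well-defined, uniformly bounded, and Lipschitz in $y$; moreover the $Y^{N,i}$ are independent. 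A Fournier--Guillin-type estimate for empirical measures of independent samples, using the uniform $\cP_p$-bound enforced by $R_1$ and the bounded drift, yields
\[
\bE \sup_{t \leq s \leq T} \dtwo\!\big(m_{\bY^N_s}^N, m^{(t,m_{\bx}^N)}_s\big) \leq C N^{-\gamma_1(p,d)}
\]
for some $\gamma_1 > 0$ depending only on $p$ and $d$.

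The core step is the comparison of $\bX^N := \bX^{N,t,\bx}$ with $\bY^N$. Applying It\^o's formula to $(\cU^N - \cV^N)(s, \bX_s^N)$ on $[t, \tau^{N,t,\bx}]$, and using (i) that $\cV^N$ classically solves \eqref{eq:hjbn} with optimal feedback $\alpha^{N,i} = -D_pH(X^i, ND_{x^i}\cV^N)$ realising the supremum in the definition of $H$, (ii) that Lemma \ref{lem:uneqn} provides the same PDE for $\cU^N$ with error $E^N = O(1/N)$, and (iii) the local uniform convexity of $H$ from Assumption \ref{assump:values} (which applies because both $ND_{x^i}\cU^N = D_m\cU(\cdot, m_{\bX^N}^N, X^i)$ and $ND_{x^i}\cV^N$ are uniformly bounded on the tube), I obtain the \emph{energy estimate}
\[
\bE \int_t^{\tau^{N,t,\bx}} \frac{1}{N}\sum_{i=1}^N \big|ND_{x^i}\cU^N - ND_{x^i}\cV^N\big|^2(s, \bX_s^N)\,ds \leq C N^{-\beta_d},
\]
where the boundary term $\bE|(\cU^N - \cV^N)(\tau^{N,t,\bx}, \bX^N_{\tau^{N,t,\bx}})|$ is controlled via the global rate \eqref{eq:cdjsrate} of \cite{cardaliaguet2023algebraic}, using the uniform second-moment bound enforced by the $\cP_p$-confinement. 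Now, driving $\bX^N$ and $\bY^N$ by the same Brownian motions and the same initial data, and decomposing $ND_{x^i}\cV^N(s,\bX) = [ND_{x^i}\cV^N - ND_{x^i}\cU^N](s,\bX) + D_m\cU(s, m_{\bX^N_s}^N, X^i_s)$ together with the $\dk$-Lipschitz continuity of $D_m\cU$ from Proposition \ref{prop.dmlip}, one obtains
\[
|\alpha^{N,i}_s - \beta^{N,i}_s| \leq C\!\big[|X^i_s - Y^i_s| + \big|ND_{x^i}\cU^N - ND_{x^i}\cV^N\big|(s,\bX_s^N) + \dk(m_{\bX^N_s}^N, m^{(t,m_{\bx}^N)}_s)\big].
\]
A Gr\"onwall argument on $Z_s := \tfrac{1}{N}\sum_i |X^i_s - Y^i_s|^2$ (using $\dtwo^2(m_{\bX^N_s}^N, m_{\bY^N_s}^N) \leq Z_s$ and the triangle inequality $\dk \leq \dtwo(m_{\bX^N}^N, m_{\bY^N}^N) + \dtwo(m_{\bY^N}^N, m^{(t,m_{\bx}^N)})$), combined with the two preceding displays, yields $\bE \sup_{s \in [t, \tau^{N,t,\bx}]} \dtwo^2(m_{\bX^N_s}^N, m^{(t,m_{\bx}^N)}_s) \leq C N^{-\min(\beta_d, \gamma_1)}$.

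On the tube-boundary-exit event, assumption (3) gives $\dtwo(m^{(t,m_{\bx}^N)}_{\tau}, m^{(t_0,m_0)}_{\tau}) < r$ while by continuity $\dtwo(m_{\bX^N_{\tau}}^N, m^{(t_0,m_0)}_{\tau}) = r_2$, so the triangle inequality forces $\dtwo(m_{\bX^N_{\tau}}^N, m^{(t,m_{\bx}^N)}_{\tau}) > r_2 - r > 0$. Markov's inequality applied to the last display then completes the proof with $\gamma_{p,d} = \tfrac12 \min(\gamma_1, \beta_d)$. The main obstacle is the synchronous-coupling/Gr\"onwall step: it requires uniform Lipschitz control on $D_m\cU$ and on the feedback $-D_pH(\cdot, D_m\cU)$ on a neighbourhood of the reference trajectory, which is precisely the refinement of \cite{CardSoug2022} supplied by Proposition \ref{prop.dmlip} and the further regularity results of Section \ref{sec:regularity}.
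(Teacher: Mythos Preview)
Your proof follows the same overall strategy as the paper's: split $\{\tau < T\}$ into ball-exit (handled by Lemma \ref{lem:lpnorm}) and tube-exit, derive the energy estimate by applying It\^o to $\cU^N$ along the optimal $N$-particle trajectory and invoking the global rate \eqref{eq:cdjsrate}, introduce an auxiliary particle system $\bY$ driven by the mean-field feedback, compare $\bX$ to $\bY$ by Gr\"onwall, and conclude via a Fournier--Guillin-type bound together with Markov's inequality.

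The one structural difference is your choice of auxiliary system. The paper takes $\bY$ to be \emph{interacting}, with drift $-D_pH\big(Y^i, D_m\cU(s, m_{\bY_s}^N, Y^i)\big)$ evaluated at the empirical measure $m_{\bY_s}^N$; this makes the $\bX$-to-$\bY$ Gr\"onwall clean (only the energy term appears), and the passage from $m_{\bY}^N$ to $m^{(t,m_{\bx}^N)}$ is then handled by the asymmetric propagation-of-chaos Lemma \ref{lem.nonsymmetric}. You instead freeze the deterministic curve $m^{(t,m_{\bx}^N)}_s$ in the drift, making the $Y^{N,i}$ independent. This makes the $\bY$-to-limit step more direct, but your Gr\"onwall then carries the extra forcing term $\dtwo^2(m_{\bY_s}^N, m^{(t,m_{\bx}^N)}_s)$, which you must feed back in. Both routes work; the paper's decouples the two estimates and is slightly tidier.

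Two small technical points. First, your Gr\"onwall on $Z_s$ actually needs $\bE\sup_s \dtwo^2(m_{\bY_s}^N, m^{(t,m_{\bx}^N)}_s)$, not the first moment you displayed; this is available (Fournier--Guillin gives $\bE\dtwo^2$ bounds), but you should state the version you need. Second, your particles $Y^{N,i}$ are independent but not identically distributed (the $x^i$ are deterministic), so the ``Fournier--Guillin-type estimate'' you invoke is not entirely standard: it requires exactly the Lipschitz-lifting device that the paper isolates as Lemma \ref{lem.nonsymmetric}.
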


\begin{proof}
    We fix $(t_0,m_0)$, $r_1 < r < r_2$ and $R_1<R_2$ as in the statement and $N \in \N$ and $(t,\bx) \in \cT^N_{r_1, R_1}$. For notational simplicity, we write $\bX = X^{N,t,\bx}$ and  $\tau = \tau^{N,t,\bx}$. At this point we have dropped the superscript for simplicity, and we note that in the rest of the argument generic constants $C$ may change from line to line, but they must not depend on $(t,\bx,N)$. 
\vs
    We are going to break the problem up by writing $\tau$ as $$ \tau = \tau_1 \wedge \tau_2,$$ where
\begin{equation}\label{takis10}
    %        \tau = \tau_1 \wedge \tau_2, \text{ where } 
        \tau_1 = \inf \Big\{ s > t : (s, \bX_s^{N,t,\bx}) \in \big(\cT_{r_2}^N\big)^c \Big\}  \wedge T \ \ \text{and} \ \ 
        \tau_2 = \inf \Big\{ s > t : \d_p(m_{\bX_s}^N, \delta_0) > R_2 \Big\}  \wedge T.
\end{equation}
    In particular, we have 
    \begin{align}
        \bP[ \tau < T] \leq \bP[ \tau_1 < T] + \bP[ \tau_2 < T].
    \end{align}
   Moreover, since  Lemma \ref{lem:lpnorm} shows that 
    \begin{align*}
        \bP[\tau_2 < T] &= \bP\big[\sup_{t \leq s \leq T}  \dpee(m_{\bX_s}^N,\delta_0) > R_2 \big] \leq \bP\big[\sup_{t \leq s \leq T}  \dpee(m_{\bX_s}^N, m_{\bx}^N) > R_2 - R_1 \big] \leq C_p/N,
    \end{align*}
 it suffices to prove a corresponding estimate on $\bP[ \tau_1 < T]$. 
\vs
We are now going to employ an argument similar to the proof of Lemma 3.2 in \cite{CardSoug2022}, but with non-symmetric initial conditions. In particular, combining  Lemma \ref{lem:uneqn} and the It\^o-Krylov formula (see \cite{Krylov1980} Section 2.10), on $[t,\tau_1]$ we have the dynamics
\begin{equation*} %\label{unexpansion}
\begin{split}
    d \cU^N(s,\bX_s) &= \Big(\partial_t \cU^N(s, \bX_s) + \sum_{j = 1}^N \Delta_{x^j} \cU^N(s,\bX_s) \\
    &- \sum_{j = 1}^N D_pH(X_s^i, N D_{x^j} \cV^N ) \cdot D_{x^j} \cU^N(x,\bX_s)\Big) ds  + \sqrt{2} \sum_{j = 1}^N D_{x^j} \cU^N(s,\bX_s) \cdot  dW_s^j  \\
    &= \Big(\frac{1}{N} \sum_{j = 1}^N H(X_s^i, N D_{x^j} \cU^N(s,\bX_s))  - \sum_{j = 1}^N D_pH(X_s^i, N D_{x^j} \cV^N ) \cdot D_{x^j} \cU^N(x,\bX_s)  \\
    & \qquad \qquad - E_s - \sF(m_{\bX_s}^N) \Big)ds + \sqrt{2} \sum_{j = 1}^N D_{x^j} \cU^N(s,\bX_s) dW_s^j \nonumber \\
    & \geq 
    \bigg( \frac{1}{N} \sum_{j = 1}^N \Big( - L\big(X_s^j,-D_pH(X_s^j, ND_{x^j} \cV^N(s,\bX_s)) \big) \nonumber \\ \nonumber &\qquad \qquad + C^{-1} \Big|D_pH(X_s^j, ND_{x^j} \cU^N(s,\bX_s)) - D_pH(X_s^j, ND_{x^j} \cV^N(s,\bX_s)) \Big|^2 \nonumber \\
    &\qquad \qquad - CN^{-1} - \sF(m_{\bX_s}^N) \bigg)ds + \sqrt{2} \sum_{j = 1}^N D_{x^j} \cU^N(s,\bX_s) dW_s^j.
\end{split}
\end{equation*}
In the last bound, we used the fact that the  strict convexity of $L$ in $a$ yields some $C>0$ such that,  for any $x \in \R^d$ and  $p,q \in \R^d$, %we have
\begin{equation}\label{convexityconsequence}
     H(x,p) - D_p H(x,q) \cdot p \geq - L(x, - D_p H(x,q)) + \frac{1}{C}|D_p H(x,p) - D_p H(x,q)|^2.
\end{equation}
%for some constant $C>0$, which is easily checked using strict convexity of $L$ in $a$. %Indeed, using the identity $-D_a L(x, - D_p H(x,p)) = p$ and the definition of $L$, we have that
%\begin{align*}
 %   H(x,p) -& D_p H(x,q) \cdot p = - L(x, -D_p H(x,p)) + \big( D_p H(x,p) - D_p H(x,q) \big) \cdot p  \\
 %   &= - L(x, - D_p H(x,p)) + \big(D_p H(x,q) - D_p H(x,p) \big) \cdot D_a L(x, - D_p H(x,p)) \\
 %   &= -L(x, - D_p H(x,q)) + L(x, - D_p H(x,q)) \\ &\qquad \qquad - \bigg(L(x, - D_p H(x,p))  + \big(D_p H(x,p) - D_p H(x,q) \big)  \cdot D_a L(x, - D_p H(x,p)) \bigg)
 %   \\
  %  &\geq - L(x, - D_p H(x,q)) + \frac{1}{C}|D_p H(x,p) - D_p H(x,q)|^2, 
%\end{align*}
%where the last inequality comes from strict convexity of $L$ in $a$.  
Taking expectations in the inequality above for the dynamics of $\cU^N(s,\bX_s)$ 
%\eqref{unexpansion} 
and integrating from $t$ to $\tau$, we get 
\begin{equation} \label{driftest}
\begin{split}
    \E&\bigg[\int_t^{\tau} \frac{1}{CN} \sum_{j = 1}^N \Big|D_pH(X_s^j, ND_{x^j} \cU^N(s,\bX_s)) - D_pH(X_s^j, ND_{x^j} \cV^N(s,\bX_s)) \Big|^2 \bigg]\\
    &\leq \E[\cU^N(\tau, \bX_{\tau})] - \cU^N(t, \bx) + \E\bigg[\int_{t}^{\tau} \frac{1}{N} \sum_{j = 1}^N  L\big(X_s^j,-D_pH(X_s^j, ND_{x^j} \cV^N(s,\bX_s))\big)\\
    & \qquad \qquad + \sF(m_{\bX_s}^N) \bigg] + C/N \\
    &\leq C\big(1 + M_2(m_{\bx}^N)\big) N^{-\beta_d} + \E[\cV^N(\tau, \bX_{\tau})] - \cV^N(t, \bx) \\
    & + \E\bigg[\int_{t}^{\tau} \frac{1}{N} \sum_{j = 1}^N  L\big(X_s^j,-D_pH(X_s^j, ND_{x^j} \cV^N(s,\bX_s))\big) + \sF(m_{\bX_s}^N) \bigg] = CN^{-\beta_d}, 
\end{split}
\end{equation} 
with the last equality following from the fact that $\bX$ is the optimal trajectory for the $N$-particle problem, and where $\beta_d$ is the exponent appearing in \eqref{eq:cdjsrate}. We note that the dependence factor $(1 + M_2(m_{\bx}^N))$ was absorbed into the constant, keeping in mind that the constant is allowed to depend on the radius $r_2$ of the largest tube. 

Next, we are going to use \eqref{driftest} to compare the process $\bX$ to the process $\bY = (Y^i)_{i = 1,...,N}$ defined on a stochastic interval $[t, \sigma]$ by 
\begin{align*}
    \begin{cases}  \displaystyle
    d Y_s^i = - D_p H\big(Y_s^i, D_m \cU(s,m_{\bY_s}^N, Y_s^i)\big) ds  + \sqrt{2}dW_s^i \quad t \leq s \leq \sigma : \inf \big\{s > t : (s,\bY) \in \Big( \cT_{r_3}^N \Big)^c  \big\} \wedge T,
    \\
    Y_t^i = x^i.
    \end{cases}
\end{align*}
In particular, noting that we can rewrite the dynamics of $\bX$ as
\begin{align*}
    dX_s^i = \Big(- D_p H\big(X_s^i, D_m \cU(s,m_{\bX_s}^N, X_s^i)\big) + E_s^i \Big) ds + dW_s^i, 
\end{align*}
with 
\begin{align*}
    E_s^i =  D_p H\big(X_s^i, ND_{x^i} \cU^N(s,\bX_s)\big) - D_p H \big(X_s^i, ND_{x^i} \cV^N(s,\bX_s) \big),
\end{align*}
we easily get by computing the dynamics of $\frac{1}{N} \sum_{ j = 1}^N |X_s^i - Y_s^i|^2$ and applying Gronwall's inequality that 
\begin{align*}
    \E\Big[\sup_{t \leq s \leq \sigma \wedge \tau} \frac{1}{N} \sum_{i = 1}^N |X_s^i - Y_s^i|^2 ds \Big] \leq C \E\bigg[\int_{t}^{\sigma \wedge \tau} \frac{1}{N} \sum_{j = 1}^N |E_s^j|^2 ds \bigg] \leq CN^{-\beta_d}, 
\end{align*}
the last inequality coming from \eqref{driftest}. It follows that
\begin{align} \label{mxmybound}
    \E\Big[\sup_{t \leq s \leq \sigma \wedge \tau} \dtwo^2(m_{\bX_s}^N, m_{\bY_s}^N) \Big] \leq CN^{-\beta_d}. 
\end{align}
In particular, if we pick $r' \in (r, r_2)$, and set 
\begin{align*}
    \sigma' = \inf \big\{s > t ; (s, \bY_s) \in \big(\cT^N_{r'})^c \big\} \wedge T, 
\end{align*}
then,  using  Markov's inequality and \eqref{mxmybound} , we find %have (by Markov's inequality) 
\begin{align*}
\bP[\tau_1 < T] &\leq \bP[ \sigma' < T] + \bP[ \sigma' = T, \text{ and } \tau_1  < T] \\
&\leq \bP[ \sigma' < T] + \bP\Big[ \sup_{t \leq s \leq \sigma \wedge \tau} \dtwo\big(m_{\bX_s}^N, m_{\bY_s}^N \big) > r_2 - r' \Big] \\
&\leq \bP[ \sigma' < T] + CN^{-\beta_d/2}. 
\end{align*}
%with the last inequality using Markov's inequality combined with \eqref{mxmybound}. 
To complete the proof, we need only show that, for some $\gamma'_{d,p}>0$ depending only on $d$ and $p$, 
\begin{align*}
    \bP[ \sigma' < T] \leq C N^{ - \gamma'_{d,p}} .
\end{align*}
%for some $\gamma'_{d,p}$ depending only on $d$ and $p$. 
We would like to infer this from Lemma \ref{lem.nonsymmetric} below, which is a non-symmetric version of a standard ``propagation of chaos" result for interacting particle systems. We cannot, however,  apply Lemma \ref{lem.nonsymmetric} directly, since the map 
\begin{align*}
    (t,x,m) \mapsto - D_pH(x,D_m\cU(t,m,x))
\end{align*} is not globally defined, let alone globally Lipschitz. To overcome this, we simply extend it, choosing a map $b = b(t,x,m) : [0,T] \times \R^d \times\sP_2(\R^d) \to \R^d$ such that 
%\begin{itemize}
\begin{equation*}
\begin{cases}
    b(t,x,m) = - D_pH(x,D_m\cU(t,m,x)) \ \ \text{ for \ $(t,x,m) \in \cT_{r_2}(t_0,m_0)$},\\
    b \  \text{ is globally bounded and is Lipschitz in $(x,m)$ (with respect to the $\dtwo$-distance) uniformly in $t$}.
\end{cases}
\end{equation*}
%\end{itemize}
This is possible thanks to the regularity of $\cU$ in $\cT_{r_2}$. Then, we  define the process $\bZ = (Z^i)_{i = 1,...,N}$ on the whole interval $[t,T]$ via 
\begin{align*}
    \begin{cases}  \displaystyle
    d Z_s^i = b(s,Z_s^i, m_{\bZ_s}^N) ds  + \sqrt{2} dW_s^i  \quad t \leq s \leq T,
    \\
    Z_t^i = x^i.
    \end{cases}
\end{align*}
It is easy to see that $\bZ = \bY$ on $[t,\sigma)$, and that the solution $Z$ of the corresponding McKean-Vlasov SDE 
\begin{align*}
    \begin{cases}  \displaystyle
    d Z_s = b(s,Z_s, \sL(Z_s)) ds  +  \sqrt{2} dW_s \quad t \leq s \leq T,
    \\
    Z_t \sim m_{\bx}^N
    \end{cases}
\end{align*}
is exactly the optimal trajectory for the mean field control problem, that is,  $\sL(Z_s) = m_s^{(t,m_{\bx}^N)}$. In particular, we can use Lemma \ref{lem.nonsymmetric} and Markov's inequality to conclude
\begin{align*}
    \bP[\sigma' < T] = \bP \Big[\sup_{ t \leq s \leq T} \dtwo(m_{\bZ_s}^N, m_s^{(t_0,m_0)}) > r' \Big] \leq \bP \Big[\sup_{ t \leq s \leq T} \dtwo(m_{\bZ_s}^N, \sL(Z_s)) > r' - r \Big] \leq CN^{- \gamma'_{d,p}},
\end{align*}
where in the first inequality we used the fact that $\dtwo(\sL(Z_s), m_s^{(t_0,m_0)}) = \dtwo(m_s^{(t,m^N_{\bx})}, m_s^{(t_0,m_0)}) < r$, by hypothesis. This completes the proof.

\end{proof}

The following lemma is a sort of ``non-symmetric" version of a classical propagation of chaos estimate for interacting particle systems, which was used in the proof of Lemma \ref{lem.key}.

\begin{lem} \label{lem.nonsymmetric}
Fix $b = b(t,x,m) : [0,T] \times \R^d \times \pr_2(\R^d) \to \R^d$, and suppose that $b$ is measurable, bounded, and Lipschitz in $(x,m)$ with respect to the $\dtwo$-distance, uniformly in $t$. Then there is a constant $C>0$ depending only on the $\linf$ and Lipschitz bounds on $b$, and a constant $\gamma_{d,p}'>0$ depending only on $d$ and $p$, such that, for any $N \in \N$ and  $(t_0,\bx_0) \in [0,T] \times (\R^d)^N$, % we have 
\begin{align*}
    \E\bigg[ \sup_{t_0 \leq t \leq T} \dtwo\big(m_t^{(t_0,m_{\bx_0}^N)}, m^N_{\bX_t^{(t_0,\bx_0)}} \big) \bigg] \leq C M_p^{1/p}(m_{\bx_0}^N) N^{- \gamma_{d,p}'}, 
\end{align*}
where $\bX_t^{(t_0,\bx_0)} = (\bX_t^{(t_0,\bx_0),i})_{i = 1,...,N}$ denotes the solution to the SDE 
\begin{align} \label{xdefnonsymm}
    dX_t^i = b(t, X_t^i, m_{\bX_t}) dt + \sqrt{2} dW_t^i \ \ \text{and} \ \ X_{t_0}^i = x_0^i, 
\end{align}
and $m_t^{(t_0,m_{\bx_0}^N)} = \sL(Y_t)$, where $Y$ solves 
\begin{align*}
    dY_t = b(t, Y_t, \sL(Y_t)) dt + \sqrt{2} dW_t \ \ \text{and} \ \  Y_{t_0} \sim m_{\bx}^N. 
\end{align*}
\end{lem}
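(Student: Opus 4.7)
The plan is to mimic the classical synchronous coupling argument for propagation of chaos, carefully accommodating the fact that the initial conditions $x_0^i$ are deterministic and not i.i.d.~samples from $m_{\bx_0}^N$. Writing $\mu_t \coloneqq m_t^{(t_0, m_{\bx_0}^N)} = \sL(Y_t)$, the first step would be to introduce the auxiliary system $\tilde\bY = (\tilde Y^i)_{1 \leq i \leq N}$ driven by the \emph{same} Brownian motions as $\bX^{(t_0,\bx_0)}$ and starting from the \emph{same} deterministic initial conditions, but whose drift is evaluated against the deterministic limit measure $\mu_t$:
\begin{align*}
    d\tilde Y^i_t = b(t, \tilde Y^i_t, \mu_t)\, dt + \sqrt{2}\, dW^i_t, \qquad \tilde Y^i_{t_0} = x_0^i.
\end{align*}
The processes $\tilde Y^i$ would then be independent (though not identically distributed), and conditioning the McKean-Vlasov process $Y$ on $Y_{t_0} = x_0^i$ yields $\sL(\tilde Y^i_t) = \sL(Y_t \mid Y_{t_0} = x_0^i)$, so that $\frac{1}{N} \sum_{i = 1}^N \sL(\tilde Y^i_t) = \mu_t$.

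The next step would be a standard synchronous Gronwall estimate: subtracting the SDEs for $X^i \coloneqq X^{(t_0, \bx_0), i}$ and $\tilde Y^i$ (so that the Brownian contributions cancel), applying the uniform Lipschitz bound on $b$ in $(x,m)$, and bounding $\dtwo(m_{\bX_s}^N, \mu_s)$ by the triangle inequality together with the elementary estimate $\dtwo^2(m_{\bX_s}^N, m_{\tilde \bY_s}^N) \leq \frac{1}{N}\sum_j |X^j_s - \tilde Y^j_s|^2$, one obtains, after squaring and averaging over $i$,
\begin{align*}
    \frac{1}{N} \sum_{i = 1}^N |X^i_t - \tilde Y^i_t|^2 \leq C \int_{t_0}^t \frac{1}{N}\sum_{j = 1}^N |X^j_s - \tilde Y^j_s|^2\, ds + C\int_{t_0}^t \dtwo^2(m_{\tilde \bY_s}^N, \mu_s)\, ds.
\end{align*}
A Gronwall argument followed by a final triangle inequality then yields
\begin{align*}
    \sup_{t_0 \leq t \leq T} \dtwo(m^N_{\bX_t}, \mu_t) \leq C \bigg(\int_{t_0}^T \dtwo^2(m^N_{\tilde \bY_s}, \mu_s)\, ds\bigg)^{\!1/2} + \sup_{t_0 \leq t \leq T} \dtwo(m^N_{\tilde \bY_t}, \mu_t),
\end{align*}
which reduces the problem to controlling the empirical error of the auxiliary system $\tilde\bY$.

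The main obstacle will be precisely this last step, since the $\tilde Y^i_s$ are independent but not identically distributed. Boundedness of $b$ together with the Gaussian noise yields $\sup_{s}\E|\tilde Y^i_s|^p \leq C(1 + |x_0^i|^p)$, so $\sup_s M_p(\mu_s) \leq C(1 + M_p(m_{\bx_0}^N))$. A careful inspection of the proof of the Fournier-Guillin bound in \cite{FournierGuillin} shows that the i.i.d.~assumption there can be relaxed to independence with a controlled average law, provided one has a finite moment of order $p > 2$; this should yield, for some exponent $\gamma'_{d,p} \in (0,1)$ depending only on $d$ and $p$,
\begin{align*}
    \E\bigl[\dtwo^2(m^N_{\tilde \bY_s}, \mu_s)\bigr] \leq C\bigl(1 + M_p(m_{\bx_0}^N)\bigr)^{2/p} N^{-2\gamma'_{d,p}}.
\end{align*}
The uniform-in-time version needed to handle the second term on the right-hand side of the Gronwall estimate would be obtained by combining this pointwise bound on a polynomially dense grid of time points with the uniform $1/2$-H\"older continuity in $t$ of both $\tilde Y^i$ and $\mu$, both of which are immediate consequences of the boundedness of $b$. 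Plugging these estimates back into the Gronwall bound will complete the proof.
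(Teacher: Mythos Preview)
Your route is genuinely different from the paper's. You introduce an auxiliary independent-but-not-identically-distributed system $\tilde\bY$ driven by the limiting flow $\mu_t$, reduce by synchronous coupling and Gronwall to controlling $\E\big[\dtwo(m^N_{\tilde\bY_s},\mu_s)\big]$, and then invoke an extension of Fournier--Guillin to non-i.i.d.\ independent samples. The paper instead uses a symmetrization trick at the level of the \emph{initial data}: it observes that the functional
\[
\Phi(\bx)=\E\Big[\sup_{t_0\leq t\leq T}\dtwo\big(m_t^{(t_0,m^N_{\bx_0})},\,m^N_{\bX_t^{(t_0,\bx)}}\big)\Big]
\]
is $\dtwo$-Lipschitz in $\bx$, extends it to $\tilde\Phi:\cP_2\to\R$, and compares $\Phi(\bx_0)$ with the ``lift'' $\hat\Phi(m_{\bx_0}^N)=\int\Phi(\by)\,(m_{\bx_0}^N)^{\otimes N}(d\by)$. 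The difference $|\Phi(\bx_0)-\hat\Phi(m_{\bx_0}^N)|\leq C\,\E[\dtwo(m^N_{\bm\xi},m^N_{\bx_0})]$ is then bounded by Fournier--Guillin applied to i.i.d.\ samples $\xi^i\sim m_{\bx_0}^N$, while $\hat\Phi(m_{\bx_0}^N)$ is precisely the quantity with i.i.d.\ initial conditions, handled by the standard coupling propagation-of-chaos estimate. Thus Fournier--Guillin is used only in its i.i.d.\ form, and no non-i.i.d.\ empirical-measure bound is needed at all.

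The real gap in your proposal is the assertion that a ``careful inspection of the proof'' in \cite{FournierGuillin} delivers the bound $\E\big[\dtwo^2(m^N_{\tilde\bY_s},\mu_s)\big]\leq C(1+M_p(m^N_{\bx_0}))^{2/p}N^{-2\gamma'_{d,p}}$ for independent $\tilde Y^i_s$ whose \emph{average} law is $\mu_s$. The Fournier--Guillin result is stated and proved for i.i.d.\ samples, and upgrading it to merely independent samples with the right averaged law is a genuine technical task, not an inspection; you would need to redo the partitioning and moment arguments, or find a separate reduction. One such reduction is in fact the paper's idea transplanted to your auxiliary system: randomize the initial conditions to i.i.d.\ $\xi^i\sim m^N_{\bx_0}$, couple via an optimal permutation so that $\dtwo(m^N_{\tilde\bY_t},m^N_{\bZ_t})\leq C\,\dtwo(m^N_{\bx_0},m^N_{\bm\xi})$ by Gronwall, and apply Fournier--Guillin twice in its i.i.d.\ form. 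That would close the gap, but at that point you have essentially rediscovered the paper's key step.
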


\begin{proof}
    We fix $(t_0,\bx_0) \in [0,T] \times (\R^d)^N$. Thanks to the Lipschitz continuity of $b$, it is straightforward to check that the map $\Phi = \Phi(\bx) : (\R^d)^N \to \R$ given by 
    \begin{align*}
        \Phi(\bx) =  \E\bigg[ \sup_{t_0 \leq t \leq T} \dtwo\big(m_t^{(t_0,m_{\bx_0}^N)}, m^N_{\bX_t^{(t_0,\bx)}} \big) \bigg] 
    \end{align*}
    is Lipschitz continuous with respect to $\dtwo$ with a constant $C$ depending only on $L$, that is, 
    \begin{align*}
       | \Phi(\bx) - \Phi(\by) | \leq C \dtwo(m_{\bx}^N, m_{\by}^N). 
    \end{align*}
%    with a constant $C$ depending only on $L$, and as a consequence, 
As a consequence,    
    we can find a function $\tilde{\Phi} : \pr_2(\R^d) \to \R$ with the same $\dtwo$-Lipschitz constant and such that $\tilde{\Phi}(m_{\bx}^N) = \Phi(\bx)$ for $\bx \in (\R^d)^N$. 
    \vs
    Now define the ``lift" $\hat{\Phi} : \pr_2(\R^d) \to \R$ by
    \begin{align*}
        \hat{\Phi}(m) = \int_{(\R^d)^N} \Phi(\by) d m^{\otimes N}(d\by). 
    \end{align*}
It follows  from Theorem 1 of \cite{FournierGuillin} that, for some $\gamma'_{p,d}>0$ depending explicitly on $p$ and $d$.
    \begin{align} \label{phihatphitilde}
        |\Phi(\bx^N) - \hat{\Phi}(m_{\bx}^N)| &= |\tilde{\Phi}(m_{\bx}^N) - \int_{(\R^d)^N} \tilde{\Phi}(m_{\by}^N) d(m_{\bx}^N)^{\otimes N}(d\by)| \nonumber  \\ &
        \leq C \int_{(\R^d)^N} \dtwo\big(m_{\by}^N, m_{\bx}^N \big) m_{\bx}^N(d\by) \leq C M^{1/p}_p(m_{\bx}^N) N^{- \gamma'_{p,d}}.
    \end{align}
%    for some $\gamma'_{p,d}$ depending (explicitly) on $p$ and $d$. 
We can also write 
    \begin{align*}
        \hat{\Phi}(m_{\bx_0}) = \E\bigg[ \sup_{t_0 \leq t \leq T} \dtwo\big(m_t^{(t_0,m_{\bx_0}^N)}, m_{\bY_t}^N \big) \bigg],
    \end{align*}
    where $\bY = (Y^1,..,Y^N)$ is defined using the same dynamics as in \eqref{xdefnonsymm} but with initial conditions $Y^i_{t_0} = \xi^i$, the $(\xi^i)_{i = 1,...,N}$ being i.i.d. with common law $m_{\bx_0}^N$. 
\vs    
It follows,  from a standard ``asynchronous coupling argument" (see, for example  the Proof of Theorem 1.10 in \cite{carmonabsdes}), that 
    \begin{align*}
        \hat{\Phi}(m_{\bx_0}) \leq C(1 + M_q(m_{\bx_0}^N)) N^{ - \gamma_{p,d}'},
    \end{align*}
    which completes the proof.
\end{proof}

The next lemma shows how the estimate from Lemma \ref{lem.key} can be used to improve the rate of convergence on small tubes.

\begin{lem}\label{lem: ImprovedRateTubes}
   Fix $(t_0,m_0)\in \cO$ and assume that  $r>0$ be small enough so that the conclusion of Lemma \ref{lem:uneqn} holds. Then, for $R > 0$, there exists a constant $C>0$ independent of $N$ such that, for all $(t,\bx)\in \cT_{r,R}^N$, 
    \[\cU^N(t,\bx)-\cV^N(t,\bx)\leq \frac{C}{N}+\sup\limits_{(s,\by)\in \cT_{r,R}^N}\Big(\cU^N(s,\by)-\cV^N(s,\by) \Big)\mathbb{P}[\tau^{N,t,\bx}<T],\] 
    where 
    \begin{align*}
        \tau^{N,t,\bx} \coloneqq \inf \{ s > t : \bX_s^{N,t,\bx} \in (\cT_{r,R}^N)^c \} \wedge T.
    \end{align*}
\end{lem}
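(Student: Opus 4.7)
The plan is to run a verification-style argument along the optimal trajectory of the $N$-particle problem, exploiting the fact that on $\cT_r^N$ the projection $\cU^N$ nearly solves \eqref{eq:hjbn} with an error of size $O(1/N)$. Write $\bX = \bX^{N,t,\bx}$, $\tau = \tau^{N,t,\bx}$, and $\alpha^j_s = -D_pH(X^j_s, ND_{x^j}\cV^N(s,\bX_s))$ for the optimal feedback, so that $\bX$ solves \eqref{eq: OptimalDynamicsForNParticle} and is driven by $\alpha$ on $[t,\tau]$.

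First, I would apply the It\^o--Krylov formula to $s \mapsto \cU^N(s,\bX_s)$ on $[t,\tau]$, justified exactly as in the proof of Lemma \ref{lem.key} since $\cU^N \in \cC^1(\cT_r^N)$ has a Lipschitz spatial gradient and \eqref{eq:error} holds a.e.\ on $\cT_r^N$ by Lemma \ref{lem:uneqn}. Substituting the PDE for $\partial_t\cU^N + \sum_j \Delta_{x^j}\cU^N$ provided by Lemma \ref{lem:uneqn}, and then applying the Fenchel--Young inequality \eqref{convexityconsequence} with $p = ND_{x^j}\cU^N$, $q = ND_{x^j}\cV^N$ (discarding the nonnegative quadratic remainder), produces the drift lower bound
\[
d\cU^N(s,\bX_s) \geq -\Big( \frac{1}{N}\sum_{j=1}^N L(X^j_s,\alpha^j_s) + \sF(m^N_{\bX_s}) + E^N_s \Big)ds + dM_s,
\]
where $M$ is a bounded martingale on $[t,\tau]$ and $|E^N| \leq C/N$ by \eqref{errorbound}.

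Next, taking expectations on $[t,\tau]$ and absorbing $\E\int_t^\tau E^N_s\,ds$ into a $C/N$ term gives
\[
\cU^N(t,\bx) \leq \E\big[\cU^N(\tau,\bX_\tau)\big] + \E\bigg[\int_t^\tau \Big( \frac{1}{N}\sum_{j=1}^N L(X^j_s,\alpha^j_s) + \sF(m^N_{\bX_s}) \Big)ds\bigg] + \frac{C}{N}.
\]
Combining this with the dynamic programming identity
\[
\cV^N(t,\bx) = \E\bigg[\int_t^\tau \Big( \frac{1}{N}\sum_{j=1}^N L(X^j_s,\alpha^j_s) + \sF(m^N_{\bX_s}) \Big)ds + \cV^N(\tau,\bX_\tau)\bigg],
\]
which holds because $\bX$ is optimal for $\cV^N(t,\bx)$ and $\tau \leq T$ is a stopping time, I obtain
\[
\cU^N(t,\bx) - \cV^N(t,\bx) \leq \E\big[\cU^N(\tau,\bX_\tau) - \cV^N(\tau,\bX_\tau)\big] + \frac{C}{N}.
\]

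Finally, I would split the expectation over $\{\tau=T\}$ and $\{\tau<T\}$. On $\{\tau = T\}$ the shared terminal datum $\cU^N(T,\cdot) = \sG(m^N_\cdot) = \cV^N(T,\cdot)$ kills the integrand. On $\{\tau < T\}$ the point $(\tau,\bX_\tau)$ lies on $\partial \cT_{r,R}^N$, and by continuity of $\cU^N - \cV^N$ the supremum over the open set $\cT_{r,R}^N$ equals the supremum over its closure, yielding $\cU^N(\tau,\bX_\tau) - \cV^N(\tau,\bX_\tau) \leq \sup_{(s,\by)\in\cT_{r,R}^N}(\cU^N(s,\by)-\cV^N(s,\by))$. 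This delivers the claimed inequality. The main technical obstacle is the It\^o--Krylov application when $\cU^N$ has only a.e.-defined second derivatives, but this is handled exactly as in Lemma \ref{lem.key}; a minor point is the boundary comparison, which is immediate by continuity.
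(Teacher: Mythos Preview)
Your proof is correct and follows essentially the same route as the paper's. The paper phrases the argument by first writing $\cU^N$ as the value of a local control problem on $\cT_r^N$ with running cost $\frac{1}{N}\sum L + \sF + E^N$ and exit cost $\cU^N$, then testing the $\cV^N$-optimal feedback as a suboptimal competitor; you instead apply It\^o--Krylov and Fenchel--Young directly along that same trajectory, which is exactly the computation underlying the verification formula. The remaining steps (subtracting the dynamic programming identity for $\cV^N$, splitting on $\{\tau=T\}$ versus $\{\tau<T\}$, using the common terminal datum) are identical, and your remark about passing to the closure via continuity is a clean way to handle the boundary point, which the paper leaves implicit.
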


\begin{proof}
    Recall that $U^N$ satisfies  \eqref{eq:error} on $\cT_{r}^N = \cT_{r}^N(t_0,m_0)$. 
Therefore, using a standard verification argument, we find, for any $(t,\bx)\in \cT_{r}^N$,  the formula
    \begin{equation} \label{unlocal}
    \begin{split}
     \cU^N(t,\bx)=\inf\limits_{\bm \alpha = (\alpha^1,...,\alpha^N)}\mathbb{E}\bigg[\int_t^{\tau} \Big(\frac{1}{N}\sum\limits_{i=1}^N L(X_s^i,\alpha_s^i)+\cF(m_{\bX_s}^N)\\
     +E^N(s,\bX_s) \Big) ds+\cU^N(\tau,\bX_{\tau}) \bigg],
    \end{split}
    \end{equation}
    subject to the dynamics
    \begin{align*}
        dX_s^i = \alpha_s^i ds +\sqrt{2}dW_s^i \quad t \leq s \leq \tau = \inf \big\{ s > t : X_s^i \in (\cT_{r}^N)^c \big\} \ \ \text{and} \ \  
        X_t^i = x^i.
    \end{align*}
    We note that both $X$ and $\tau$ are impacted by the choice of $\bm \alpha$. 
    \vs 
    Similarly, we have
\begin{align} \label{vnlocal}
\cV^N(t,\bx)=\inf \limits_{\bm \alpha}\mathbb{E}\bigg[\int_t^{\tau}\frac{1}{N}\sum\limits_{i=1}^NL(X_s^i,\alpha_s^i)+\cF(m_{\bX_s}^N) ds+ \cV^N(\tau,\bX_{\tau}) \bigg],
\end{align}
subject to the same dynamics. 
\vs
By dynamic programming, the optimal control for the optimization problem in \eqref{vnlocal} is the feedback $\alpha_s^i = - D_p H(X_s^i,  ND_{x^i} \cV^N(s,\bX_s))$, and when this control is played $\bX = \bX^{N,t,\bx}$ and  $\tau = \tau^{N,t,\bx}$. 
\vs
Thus testing this control in the optimization problem \eqref{unlocal}, we find

    \begin{align*} 
    \cU^N(t,\bx)-\cV^N(t,\bx) &\leq \mathbb{E}\Big[\int_{t}^{\tau^{N,t,\bx}}E_N(s,\bX_s)ds\Big]+\mathbb{E}\Big[\cU^N(\tau^{N,t,\bx},\bX_{\tau^{N,t,\bx}})-\cV^N(\tau^{N,t,\bx},\bX_{\tau^{N,t,\bx}}) \Big] 
    \\
    & \leq \frac{C}{N}+\mathbb{E}\Big[\Big(\cU^N(\tau^{N,t,\bx},X_{\tau})-\cV^N(\tau^{N,t,\bx},X_{\tau^{N,t,\bx}})\Big) \mathbf{1}_{\tau^{N,t,\bx} <T} 
    \\
    &\qquad  + \Big(\cU^N(\tau^{N,t,\bx},X_{\tau^{N,t,\bx}})-\cV^N(\tau^{N,t,\bx},X_{\tau^{N,t,\bx}})\Big) \mathbf{1}_{\tau^{N,t,\bx} =T}\Big]
    \\ 
    &\leq \frac{C}{N}+\sup\limits_{(t,\bx)\in \cT_{r}^N}\Big(\cU^N-\cV^N \Big)\mathbb{P}\Big[\tau^{N,t,\bx}<T \Big],
    \end{align*}
    where we used in the first inequality  the fact that $|E^N| \leq C/N$ and in the last inequality the fact that $U^N(T,\bx)=\cG(m_{\bx}^N)=V^N(T,\bx)$.

\end{proof}

We  now proceed with the proof of Proposition \ref{prop.tube}.

\begin{proof}[Proof of Proposition \ref{prop.tube}]
    Let $\gamma_{d,p}$ be the exponent appearing in Lemma \ref{lem.key}, and choose $M \in \N$ with $M -1 \geq 1 / \gamma_{d,p}$,  $R_M > 0$ large enough so  that $m_0 \in B_{R_M}^p$, and, for $m = 2,...,M$, define $R_{m-1}$ inductively by $R_{m-1} = R_m + C_p$, $C_p$ being the constant appearing in \eqref{cpdef} or, in other words, $C_m = C_M + (M-m) C_p$ for $m = 1,...,M$. The important point is that we have
    \begin{align*}
        m_0 \in B_{R_M}^p \subset B_{R_{M-1}}^p \subset ... \subset B_{R_1}^p \  \text{ and } \ R_{m} - R_{m - 1} = C_P.
    \end{align*}
    Next, choose
     $r^{(1)}_2 > 0$ small enough so that the conclusion of Lemma \ref{lem:uneqn} holds, and then construct, for $i = 1,2$ and $m = 1,...,M,$,  $r^{(m)}_{i}$, 
in such a way that 
   \[  \begin{cases}
         r^{(m+1)}_2 = r^{(m)}_1 < r^{(m)}_2 \ \ \text{for} \ \  m = 1,...,M-1, \text{and} \\[1.2mm]
         \text{there exists $r^{(m)}$ such that $r^{(m)}_1 < r^{(m)} < r^{(m)}_2$ satisfy the hypotheses of Lemma \ref{lem.key}. }
     \end{cases}\]
%     and there exists $r^{(m)}$ such that $r^{(m)}_1 < r^{(m)} < r^{(m)}_2$ satisfy the hypotheses of Lemma \ref{lem.key}. 
     Lemma \ref{lem.tubefacts} makes it clear that we can choose $r^{(m)}_i$ satisfying these conditions. To be clear, the important point is that we have
     \begin{align}
     0 < r_1^{(M)} < r_2^{(M)} = r_1^{(M-1)} < ... < r_2^{(2)} = r_1^{(1)} < r_2^{(1)}, 
    \end{align}
    and that, for $m = 1,...,M$, there is a $r^{(m)}$ such that the triple $r^{(m)}_1 < r^{(m)} < r^{(m)}_{2}$ satisfies the hypotheses of Lemma \ref{lem.key}.
 \vs   
    Combining Lemma~\ref{lem: ImprovedRateTubes} and Lemma~\ref{lem.key}, we find that, for each $m = 2,...,M$,
    \begin{equation} \label{firststep}
    \begin{split}
        \sup_{(t,\bx) \in \cT^N_{r_1^{(m)},R_m}} \Big( \cU^N(t,\bx)-\cV^N(t,\bx) \Big) &\leq \frac{C}{N}+\sup\limits_{(t,\bx)\in \cT_{r_2^{(m)}, R_{m-1}}^N}\Big(\cU^N-\cV^N \Big)N^{- \gamma_{d,p}}\\
        &= \frac{C}{N}+\sup\limits_{(t,\bx) \in \cT_{r_1^{(m-1)}, R_{m - 1}}^N}\Big(\cU^N-\cV^N \Big)N^{- \gamma_{d,p}}
    \end{split}
    \end{equation}
    Then \eqref{firststep} and  the global estimate \eqref{eq:cdjsrate} easily yield, through an inductive argument starting with $m = 1$,  that,     for each $m = 1,...,M$,
    \begin{align*}
        \sup_{(t,\bx) \in \cT^N_{r_1^{(m)},R_m}} \Big( \cU^N(t,\bx)-\cV^N(t,\bx) \Big) \leq C/N^{1 \wedge \big(\beta_d + (m-1) \gamma_{d,p}\big)}
    \end{align*}
 %   for each $m = 1,...,M$. 
    Since $M -1 > 1/ \gamma_{d,p} \geq 1$ by design, we see in particular that 
    \begin{align*}
        \sup_{(t,\bx) \in \cT^N_{r_1^{(M)},R_M}} \Big( \cU^N(t,\bx)-\cV^N(t,\bx) \Big) \leq C/N,
    \end{align*}
   which together with Proposition \ref{prop.easyinequality} completes the proof.

\end{proof}

\section{The proof of Theorem \ref{thm.gradient}} \label{sec:gradientsconverge}

In this section we prove the convergence result for the gradients. Once again, we fix a $p > 2$ throughout the section. As in the proof of Theorem \ref{thm.main}, a useful first step is to observe that it suffices to prove the following proposition.

\begin{prop} \label{prop.tubegradient}
Let $(t_0,m_0) \in \cO$ and $R > 0$. Then there exists $r, C > 0$ such that, for each $N \in \N$, $i = 1,...,N$ and $(t,\bx) \in \cT^N_{r,R}(t_0,m_0)$, 
\begin{align*}
    |ND_{x^i} \cV^N(t,\bx) - D_{m} \cU(t,m_{\bx}^N)| \leq \frac{C}{N}.
\end{align*}
\end{prop}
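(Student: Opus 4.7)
The strategy is to mirror the tube-based bootstrap used for Proposition \ref{prop.tube}, but to replace the Hamilton--Jacobi verification argument of Lemma \ref{lem: ImprovedRateTubes} (which relied on the convex structure of $H$) by a linear backward stochastic differential equation (BSDE) comparison for the gradient processes along the optimal $N$-particle trajectory. The $\cC^2$-regularity and $\bd_1$-Lipschitz estimate on $D_{mm}\cU$ from Theorem \ref{thm.c2} is exactly what is needed to justify the relevant differentiations and obtain the required bounds on the coefficients of this BSDE.

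The first step is to show that, on a sufficiently small tube $\cT_{r,R}^N(t_0,m_0)$, the projection $\cU^N$ is in fact $\cC^2$ with Lipschitz derivatives (using Theorem \ref{thm.c2}), and that $ND_{x^j}\cU^N(t,\bx) = D_m \cU(t,m_{\bx}^N, x^j)$ satisfies the linearized HJB equation obtained by differentiating \eqref{eq:error} in $x^j$. Writing $D_{x^i}(N D_{x^j}\cU^N) = \delta_{ij} D_y D_m\cU(t,m_{\bx}^N,x^j) + \frac{1}{N} D_{mm}\cU(t,m_{\bx}^N,x^j,x^i)$ one sees that this equation has a driver of ``mean field'' type in the $p$-variable, and the inhomogeneity $N D_{x^j} E^N$ coming from the error in \eqref{eq:error} is still of order $1/N$, uniformly in $j$, by the bound on $D_{mm}\cU$ (for the diagonal term $k=j$) and on $D^2_{mm}\cU$-type objects (for the off-diagonal terms). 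Similarly, differentiating \eqref{eq:hjbn} yields the exact equation for $N D_{x^j}\cV^N$.

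Next, fix $(t,\bx)\in \cT_{r,R}^N$, let $\bX=\bX^{N,t,\bx}$ be the optimal trajectory for $\cV^N$, and stop it at $\tau = \tau^{N,t,\bx}$ (the exit time of the appropriate tube, as in Lemma \ref{lem.key}). Itô's formula applied to $P^j_s := ND_{x^j}\cV^N(s,\bX_s)$ and $\tilde P^j_s := ND_{x^j}\cU^N(s,\bX_s)$ yields two BSDEs on $[t,\tau]$ with the same terminal data at time $T$ (namely $D_m\sG(m_{\bX_T}^N,X^j_T)$) and drivers differing only by the $O(1/N)$ error and by a term comparing $D_pH(X^i_s, P^i_s)$ with $D_pH(X^i_s, \tilde P^i_s)$, tested against $D_{x^i}(ND_{x^j}\cU^N)$. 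Setting $Q^j_s := P^j_s - \tilde P^j_s$ and using the Lipschitz bounds on $D_xH$, $D_pH$, $D_y D_m\cU$ and $D_{mm}\cU$, I obtain a linear BSDE of the form
\begin{equation*}
-dQ^j_s = \Big(A^j_s Q^j_s + \frac{1}{N}\sum_{i=1}^N B^{j,i}_s Q^i_s + R^j_s\Big)ds - \sum_i \Xi^{j,i}_s dW^i_s \quad \text{on } [t,\tau],
\end{equation*}
with uniformly bounded coefficients $A^j, B^{j,i}$ and $|R^j_s|\leq C/N$. Taking expectations, maximizing over $j$, and applying Gronwall, this yields the gradient analogue of Lemma \ref{lem: ImprovedRateTubes}, namely
\begin{equation*}
\sup_{(t,\bx)\in\cT_{r_1,R_1}^N} \max_j |Q^j(t,\bx)| \leq \frac{C}{N} + C\,\mathbb{P}[\tau<T]\,\sup_{(t,\bx)\in\cT_{r_2,R_2}^N}\max_j |Q^j(t,\bx)|.
\end{equation*}

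Finally, Lemma \ref{lem.key} gives $\mathbb{P}[\tau<T]\leq CN^{-\gamma_{d,p}}$ uniformly on the smaller tube, and a crude a priori $L^\infty$-bound on $|Q^j|$ (coming from the uniform boundedness of $D_m\cU$ on the compact tube and the known Lipschitz bounds on $\cV^N$ from \cite{cardaliaguet2023algebraic,CardSoug2022}) serves as the initial input. Iterating the improvement estimate through a nested family of tubes exactly as in the proof of Proposition \ref{prop.tube} — picking $M$ large enough so that $M\gamma_{d,p}\geq 1$ — produces the desired bound $\max_j|Q^j|\leq C/N$ on the smallest tube, which is the statement of Proposition \ref{prop.tubegradient}. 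The main technical obstacle will be the careful justification of the Itô expansion for $\tilde P^j_s$, since this requires the full strength of Theorem \ref{thm.c2} together with a sharp control on the inhomogeneous term $ND_{x^j}E^N$; once this is in place, the linear-BSDE/Gronwall step and the bootstrap are routine.
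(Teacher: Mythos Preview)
Your plan is correct and follows the same overall architecture as the paper: derive the equation for $\cU^{N,i}=D_{x^i}\cU^N$ with an $O(1/N^2)$ error via Theorem~\ref{thm.c2} (this is Lemma~\ref{lem:unieqn} in the paper), run an It\^o/BSDE computation for the gradient difference along the optimal $N$-particle trajectory, apply Gronwall to get an iteration inequality, and bootstrap through nested tubes using Lemma~\ref{lem.key}.

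The one tactical difference is that the paper inserts an intermediate $\ell^2$ stage: it first proves (Lemma~\ref{lem: ImprovedRateTubesgradientl2} and Lemma~\ref{lem.tubegradientd2}) that $\sum_i|N D_{x^i}\cV^N-D_m\cU|^2\leq C/N$ on a small tube, and only then upgrades to the pointwise-in-$i$ bound by computing $d|\Delta Y^i|^2$ for a fixed $i$ and absorbing the cross term $\tfrac1N\sum_{j}|\Delta Y^j|^2$ using the already-established $\ell^2$ estimate, followed by a second bootstrap. Your direct route to $\max_i$ also works, but you should square first rather than ``take expectations, maximize over $j$'' on the linear BSDE for $Q^j$ itself (the martingale term obstructs that). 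Using $|A^{i,j}|\leq C(1_{i=j}+1/N)$ and Cauchy--Schwarz on the cross term, together with the elementary fact $\E[\tfrac1N\sum_k|\Delta Y^k|^2]\leq\max_k\E[|\Delta Y^k|^2]$, one obtains $\max_i\E[|\Delta Y^i_{s\wedge\tau}|^2]\leq C\max_i\E[|\Delta Y^i_\tau|^2]+C/N^4$, hence the iteration inequality with $|Q^j|^2$ and $\bP[\tau<T]$ (or, after square roots, with $|Q^j|$ and $\bP[\tau<T]^{1/2}$), which is equally good for the bootstrap. So your approach saves one round of iteration at the cost of a slightly less modular presentation; the paper's two-stage version is cleaner to state but not essential.
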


It is easy to see that Proposition \ref{prop.tubegradient} implies Theorem \ref{thm.gradient}.

\begin{proof}[Proof of Theorem \ref{thm.gradient}] Theorem \ref{thm.gradient} follows from Proposition \ref{prop.tubegradient} exactly as Theorem \ref{thm.main} followed from Proposition \ref{prop.tube}.

\end{proof}

The rest of this section is devoted to proving Proposition \ref{prop.tubegradient}. Before we proceed, we fix some of the notations that will be used throughout the section.
\vs
First we define
\begin{align*}
    \cV^{N,i} = D_{x^i} \cV^N \ \ \text{and} \ \ \cU^{N,i} = D_{x^i} \cU^N \text{ for }i\in \{1,\cdots, N\}.
\end{align*}
Using the the PDEs for $\cV^N$ and $\cU^N$, it is easy to check that $\cV^{N,i}$ satisfies, in  $[0,T) \times (\R^d)^N$,
\begin{equation} \label{eq.vni}
\begin{split}
    &- \partial_t \cV^{N,i} - \sum_j \Delta_{x^j} \cV^{N,i} + \sum_j D_pH(x^j, N V^{N,j}) D_{x^j} \cV^{N,j} \\
    & \qquad \qquad + \frac{1}{N} D_{x^i} H(x^i, N \cV^{N,i}) = \sF^{N,i} \coloneqq \frac{1}{N} D_{m} \sF(t,m_{\bx}^N,x^i).
\end{split}
\end{equation}
%in $[0,T) \times (\R^d)^N$. 
Meanwhile, if $\cU$ were smooth enough on some tube $\cT_{r} = \cT_{r}(t_0,m_0)$, then we would have 
\begin{equation} \label{eq.uni}
\begin{split}
    &- \partial_t \cU^{N,i} - \sum_j \Delta_{x^j} \cU^{N,i} + \sum_j D_pH(x^j, N \cU^{N,j}) D_{x^j} \cU^{N,i} \\
    & \qquad  \qquad + \frac{1}{N} D_{x^i} H(x^i, N \cU^{N,i}) = \sF^{N,i} + E^{N,i} \ \ \text{in} \ \  \cT^N,
\end{split}
\end{equation} 
with 
\begin{align*}
    E^{N,i}(t,\bx) = -\frac{2}{N^2} D_x \tr \big[ D_{mm} \cU(t,m_{\bx}^N,x^i,x^i) \big] - \frac{1}{N^3} \sum_{j = 1}^N D_m \big[ \tr(D_{mm} \cU(t,m_{\bx}^N,x^j,x^j) \big](x^j).
\end{align*}
Of course, Theorem \ref{thm.c2} does not give enough regularity to immediately justify the computation above, since  it only shows that $D_{mm} \cU$ Lipschitz in an appropriate local sense. Nevertheless, when combining the regularity results Theorem \ref{thm.c2} and Proposition \ref{prop.timereg} with the reasoning in the proof of \cite[Proposition 5.1]{ddj2023}, one easily obtains the following analogue of Lemma \ref{lem:uneqn}.

\begin{lem} \label{lem:unieqn}
For any $(t_0,m_0) \in \cO$, there is $r > 0$ such that $\ov{\cT_{r}(t_0,m_0)} \subset \cO$, and, for each $N \in \N$, the projection $\cU^{N,i}$ defined by \eqref{undef} is Lipschitz in time and $\cC^1$ in space on the set $\cT^N_{r}(t_0,m_0)$, with spatial derivatives $D_{x^j} \cU^{N,i}$ being Lipschitz continuous in $\bx$ and such that the $\linf-$function 
\begin{align*}
E^{N,i} \coloneqq - \partial_t \cU^{N,i} - \sum_j \Delta_{x^j} \cU^{N,i} + \sum_j D_pH(x^j, N \cU^{N,j}) D_{x^j} \cU^{N,i} + \frac{1}{N} D_{x^i} H(x^i, N \cU^{N,i}) - \sF^{N,i} 
\end{align*}
satisfies, for each $N \in \N$, the bound
\begin{align*}
    \|E^{N,i}(t,\bx) \|_{\linf(\cT_{r}^N(t_0,m_0))} \leq C/N.
\end{align*}
%for each $N \in \N$.
\end{lem}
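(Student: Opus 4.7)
The plan is to use the openness of $\cO$ together with the regularity results of Theorem \ref{thm.c2} and Proposition \ref{prop.timereg} to obtain sharp interior regularity of $\cU$ on a tube, then to pull it back to $\cU^N$, and finally to differentiate in $x^i$ the PDE \eqref{eq:error} already established for $\cU^N$ in Lemma \ref{lem:uneqn}.

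First I would choose $r>0$ small enough that $\ov{\cT_r(t_0,m_0)}\subset\cO$; this is possible because $\cO$ is open. On this compact set Theorem \ref{thm.c2} provides the existence, continuity, boundedness, and $\bd_1$-Lipschitz regularity in $m$ of $D_{mm}\cU$, uniformly in the remaining variables, while Proposition \ref{prop.timereg} supplies the corresponding time regularity. The chain rule on Wasserstein space then yields
\[
N D_{x^i}\cU^N(t,\bx) = D_m\cU(t,m_\bx^N,x^i), \quad N^2 D_{x^ix^j}^2\cU^N = N\delta_{ij}\,D_xD_m\cU(t,m_\bx^N,x^i) + D_{mm}\cU(t,m_\bx^N,x^i,x^j).
\]
From the uniform boundedness of $D_xD_m\cU$ (already known from \cite{CardSoug2022}) and of $D_{mm}\cU$, it follows that $\cU^{N,i}$ is $C^1$ in $\bx$ on $\cT_r^N$ with Lipschitz spatial gradient and is Lipschitz in $t$.

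Next I would differentiate the PDE \eqref{eq:error} in $x^i$. The linear-in-second-derivative operator on the left-hand side reorganizes into precisely the operator appearing in \eqref{eq.uni}, while on the right-hand side $D_{x^i}\sF(m_\bx^N) = \sF^{N,i}$ by the definition of the linear derivative, and $D_{x^i}E^N$ produces the two terms entering the definition of $E^{N,i}$: the $D_x$-term comes from the $j=i$ contribution in $E^N$, where $x^i$ occupies both the third and fourth slots of $D_{mm}\cU$, while the $D_m$-term comes from the $m_\bx^N$-dependence of every summand, with an extra $1/N$ factor from $D_{x^i} m_\bx^N = \tfrac{1}{N}\delta_{x^i}$. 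The uniform bound $\|E^{N,i}\|_{\linf(\cT_r^N)} \leq C/N$ then follows because the a.e. derivatives of $D_{mm}\cU$ are controlled by the Lipschitz constant from Theorem \ref{thm.c2}, and the explicit $1/N^2$ and $1/N^3$ prefactors, the latter summed over $N$ indices, are each of order $1/N^2$, which, when compared with the natural $1/N$ rescaling needed to match $D_m\cU$ in \eqref{eq.uni}, yields the desired $1/N$ rate.

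The main obstacle is the differentiation in the third paragraph: Theorem \ref{thm.c2} only provides $\bd_1$-Lipschitz regularity of $D_{mm}\cU$ in $m$, not classical differentiability, so $D_{x^i}E^N$ must be interpreted in the sense of Rademacher's theorem on the finite-dimensional projection $\bx\mapsto E^N(t,\bx)$. Consequently the equation \eqref{eq.uni} is only shown to hold almost everywhere on $\cT_r^N$, which is nevertheless sufficient for the verification-style arguments carried out in Section \ref{sec:gradientsconverge}. This parallels the strategy of the proof of \cite[Proposition 5.1]{ddj2023}, now combined with the sharper interior regularity afforded by Theorem \ref{thm.c2}.
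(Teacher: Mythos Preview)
Your approach is correct and matches the paper's own argument, which likewise points to Theorem \ref{thm.c2}, Proposition \ref{prop.timereg}, and the reasoning of \cite[Proposition 5.1]{ddj2023}. Two small clarifications: $\ov{\cT_r(t_0,m_0)}$ is not itself compact in $\cP_2$ (rather, the optimal trajectory it surrounds is, and that compactness is what yields uniform constants over a sufficiently thin tube), and the explicit formula for $E^{N,i}$ is already $O(1/N^2)$, so the ``natural $1/N$ rescaling'' step you describe is unnecessary---the stated $C/N$ bound follows a fortiori.
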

\vs
The next Lemma gives an analogue of Lemma \ref{lem: ImprovedRateTubes}, but at the level of the gradients of $\cU^N$ and $\cV^N$. 

\begin{lem}\label{lem: ImprovedRateTubesgradientl2}
    Let $(t_0,m_0) \in \cO$ and $r>0$ be small enough so that the conclusion of Lemma \ref{lem:unieqn} holds. Moreover, let
    \[\tau_{r}^{N,t,\bx}=\inf\{s>t: (s,\bX_x^{N,t,\bx})\in (\cT_{r}^N(t_0,m_0))^c\}\wedge T.\]
    Then, there exists an independent of $N$  constant $C>0$ such that, for all $(t,\bx)\in \cT_{r}^N$,
    \begin{align*}
     \sum_{i = 1}^N |\cU^{N,i}(t,\bx)-\cV^{N,i}(t,\bx)|^2 \leq \frac{C}{N^3} + \sup\limits_{(s,\by)\in \cT_{r}^N}\Big(\sum_{i = 1}^N |\cU^{N,i}(s,\by)-\cV^{N,i}(s,\by)|^2 \Big)\mathbb{P}[\tau_{r}^{N,t,\bx}<T].
    \end{align*}
\end{lem}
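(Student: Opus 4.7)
The plan is to set $W^{N,i} := \cU^{N,i} - \cV^{N,i}$ and $S(t,\bx) := \sum_{i=1}^N |W^{N,i}(t,\bx)|^2$, derive a linear backward equation for $W^{N,i}$, and apply It\^o's formula to $S$ along the optimal trajectory $\bX^{N,t,\bx}$. Subtracting \eqref{eq.vni} from \eqref{eq.uni} and rewriting the Hamiltonian differences, each $W^{N,i}$ satisfies on $\cT_r^N$
\[
-\mathcal{A}W^{N,i} = E^{N,i} + R^{N,i},
\]
where $\mathcal{A} := \partial_t + \sum_j \Delta_{x^j} - \sum_j D_pH(x^j, N\cV^{N,j}) D_{x^j}$ is the backward generator of \eqref{eq: OptimalDynamicsForNParticle} and
\[
R^{N,i} = -\sum_j \big[D_pH(x^j, N\cU^{N,j}) - D_pH(x^j, N\cV^{N,j})\big] D_{x^j}\cU^{N,i} - \tfrac{1}{N}\big[D_xH(x^i, N\cU^{N,i}) - D_xH(x^i, N\cV^{N,i})\big].
\]
The terminal condition $\cU^N(T,\cdot) = \cV^N(T,\cdot) = \sG(m_\bx^N)$ gives $W^{N,i}(T,\cdot)\equiv 0$, so $S(T,\cdot)\equiv 0$, and hence $\E[S(\tau, \bX_\tau^{N,t,\bx})] \leq \sup_{\cT_r^N}S \cdot \bP[\tau_r^{N,t,\bx} < T]$.

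Applying It\^o to $S(s, \bX_s^{N,t,\bx})$ on $[t,\tau]$, summing over $i$, taking expectation, and discarding the nonnegative quadratic-variation term $2\sum_{i,j}\|D_{x^j}W^{N,i}\|^2$, I get
\[
S(t,\bx) \leq \E[S(\tau, \bX_\tau)] + 2\,\E\int_t^\tau \Big|\sum_{i=1}^N W^{N,i}\cdot (E^{N,i}+R^{N,i})\Big|(s,\bX_s)\,ds.
\]
The main work is to bound this integrand by $CS + C/N^3$. The key quantitative inputs are $|D_{x^i}\cU^{N,i}|\leq C/N$ and $|D_{x^j}\cU^{N,i}|\leq C/N^2$ for $j\neq i$ --- both following from $\cU^{N,i} = \tfrac{1}{N} D_m\cU(t,m_\bx^N,x^i)$ and the boundedness of $D_x D_m\cU$ and $D_{mm}\cU$ on the tube (Theorem~\ref{thm.c2}) --- together with the Lipschitz bound $|D_pH(x, Np_1) - D_pH(x, Np_2)|\leq CN|p_1-p_2|$ on the bounded range of $N\cU^{N,j}, N\cV^{N,j}$. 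A short computation then yields $|R^{N,i}|\leq C|W^{N,i}| + C N^{-1/2} S^{1/2}$, so $\sum_i |W^{N,i}||R^{N,i}|\leq CS$. For the inhomogeneity one needs the sharpened bound $|E^{N,i}|\leq C/N^2$, which follows from the explicit formula for $E^{N,i}$ displayed just after \eqref{eq.uni} upon using the continuity of $D_{mm}\cU$ and $D_m D_{mm}\cU$ on the tube (refining the conservative $C/N$ estimate recorded in Lemma~\ref{lem:unieqn}). Cauchy--Schwarz then gives $\sum_i |W^{N,i}||E^{N,i}|\leq \tfrac{1}{2}S + C/N^3$. Assembling these,
\[
S(t,\bx) \leq \E[S(\tau, \bX_\tau)] + C \E\int_t^\tau S(s, \bX_s)\, ds + \frac{C}{N^3}.
\]

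To eliminate the integral term I will apply the same It\^o argument to $\widetilde S(s,\bx) := e^{Cs}S(s,\bx)$. Since $\mathcal{A}S \geq -CS - C/N^3$ by the pointwise estimates above, one has $\mathcal{A}\widetilde S \geq -Ce^{CT}/N^3$, and the It\^o formula for $\widetilde S$ yields $\widetilde S(t,\bx) \leq \E[\widetilde S(\tau, \bX_\tau)] + CT e^{CT}/N^3$. Undoing the weight and combining with $\E[S(\tau, \bX_\tau)] \leq \sup_{\cT_r^N}S \cdot \bP[\tau<T]$ gives
\[
S(t,\bx) \leq e^{CT}\,\sup_{\cT_r^N}S \cdot \bP[\tau_r^{N,t,\bx}<T] + C/N^3,
\]
which is the claimed inequality after absorbing $e^{CT}$ into the constant $C$.

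The hardest part of the argument is securing the sharp $1/N^3$ scaling: several terms that a priori blow up with $N$ (e.g.\ the factors of $N$ inside $D_pH(x, N\,\cdot\,)$) must be killed by matching $1/N$ or $1/N^2$ decay in the Wasserstein derivatives of $\cU$. Both gain estimates --- the $O(1/N^2)$ size of $E^{N,i}$ and the off-diagonal decay $|D_{x^j}\cU^{N,i}| = O(1/N^2)$ for $j\neq i$ --- hinge on the continuity of $D_{mm}\cU$ on the tube $\cT_r(t_0,m_0)$ provided by Theorem~\ref{thm.c2}, and hence on the stronger data regularity in Assumption~\ref{assump:gradients}. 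Without these finer regularity results from Section~\ref{sec:regularity}, the method would produce only a strictly slower rate.
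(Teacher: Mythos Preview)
Your proof is correct and follows essentially the same approach as the paper: linearize the difference $\cU^{N,i}-\cV^{N,i}$, apply It\^o to the sum of squares along the optimal $N$-particle trajectory, exploit the off-diagonal decay $|D_{x^j}\cU^{N,i}|=O(1/N^2)$ for $j\neq i$ (the paper packages this as $|A^{i,j}|\leq C/N + C\,1_{i=j}$), and close with Gronwall. One small imprecision: the sharpened bound $|E^{N,i}|\leq C/N^2$ does not require the pointwise existence of $D_m D_{mm}\cU$ (which the paper never establishes) but only the $\bd_1$-Lipschitz continuity of $D_{mm}\cU$ from Theorem~\ref{thm.c2} together with the boundedness of $D_xD_{mm}\cU$ coming from Proposition~\ref{prop.kreg}, processed through the argument of \cite[Proposition~5.1]{ddj2023}; you are right that the $C/N$ stated in Lemma~\ref{lem:unieqn} is conservative and that $C/N^2$ is what is actually used both here and in the proof of Proposition~\ref{prop.tubegradient}.
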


\begin{proof} 
For the sake of notational simplicity,  we give the proof for $d=1$. The general case follows the same steps. 
\vs
Fix $(t,\bx) \in \cT^{N}_{r}$ and consider  the optimal trajectory  $\bX = \bX^{(t,\bx)}$ for the $N$-particle control problem started from $(t,\bx)$.  Let $\tau = \tau_{r}^{N,t,\bx}$, drop the superscript $N$ and write  $E_s^{i}$ for $E_s^{N,i}$  and  set 
    \begin{align*}
        Y^i_s = \cV^{N,i}(s,\bX_s), \quad  Z^i_s = \sqrt{2} D \cV^{N,i}(s,\bX_s), \\
        \ov{Y}_s^i = \cU^{N,i}(s, \bX_s), \quad \ov{Z}_s^i = \sqrt{2} D \cU^{N,i}(s, \bX_s).
    \end{align*}
%    Finally, for simplicity we drop the superscript $N$ and write  $E_s^{i}$ for $E_s^{N,i}$. 
    On the stochastic interval $[t, \tau)$, we have the dynamics
    \begin{align*}
        dY_s^i =  \bigg(\frac{1}{N} D_{x^i} H(X_s^i, N Y_s^i) - \sF^{N,i}(s,\bX_s) \bigg) ds + Z_s^i d\bW_s, 
      \end{align*}      
and
      \begin{align*}  d\ov{Y}_s^i &= \bigg(\frac{1}{N} D_{x^i} H(X_s^i, N\ov{Y}_s^i) - \sF^{N,i}(s,\bX_s) - E^{N,i}(s,\bX_s)\\
        &\qquad  + \sum_j \Big(D_p H(X_s^j, N\ov{Y}^{j}) - D_p H(X_s^j, NY^{j}) \Big)D_{x^j} \cU^{N,i}(s,\bX_s) \bigg) ds + \ov{Z}_s^i d \bW_s, 
    \end{align*}
where we have written $\bW = (W^1,...,W^N)$ for simplicity.
\vs
We set 
\begin{align*}
    \Delta Y_s^i = Y_s^i - \ov{Y}_s^i \ \ \text{and} \ \ \Delta Z_s^i = Z_s^i - \ov{Z}_s^i,
\end{align*}
and, for each $i,j \in \{1,\cdots, N\}$, define the process $A^{i,j}$ by 
\begin{align*}
    A^{i,j}_s &= \frac{1}{|\Delta Y^{j}|^2} D_{x^j} \cU^{N,i}(s,\bX_s) \Big(D_p H(X_s^j, N Y^{j}) - D_p H(X_s^j, N\ov{Y}^{j}) \Big) \big(\Delta Y^{j}\big)^T 1_{\Delta Y^{j} \neq 0} \\
    &\qquad \qquad + \frac{1}{N|\Delta Y^{i}|^2} \Big(D_{x^i} H(X_s^i, NY_s^i) - D_{x^i}H(X_s^i, N \ov{Y}_s^i) \Big) \big(\Delta Y_s^i\big)^T 1_{\Delta Y^{i} \neq 0} 1_{i = j}.
\end{align*}
We may now write 
\begin{align*}
    d \Delta Y_s^i = \bigg(\sum_{j = 1}^N A_s^{i,j} \Delta Y_s^j + E^{i}_s \bigg)ds + \Delta Z_s^i d \bW_s.
\end{align*}
 The key point about the coefficients $A^{i,j}$ is that, for some $C$ independent of $N$,
\begin{align*}
    |A^{i,j}| \leq C/N + C 1_{i = j}.
\end{align*}
%for some $C$ independent of $N$. 
Next, we compute 
\begin{align*}
    d \big( \sum_{i=1}^N |\Delta Y_s^i|^2 \big) = 2 \big(\sum_{i,j = 1}^N A_s^{i,j}  \Delta Y_s^i  \Delta Y_s^j + \sum_{i = 1}^N \Delta Y_s^i E_s^i + \frac{1}{2} \sum_{i = 1}^N |\Delta Z_s^i|^2 \big) ds +2 \sum_{i=1}^N \Delta Y_s^i \Delta Z_s^i d \bW_s. 
\end{align*}
Integrating and taking expectations, we find, for some,  independent of $N$, $C$,
\begin{align*}
    \E\Big[ \sum_{i=1}^N |\Delta Y_{s \wedge \tau}^i|^2 \Big] &= \E\Big[\sum_{i=1}^N |\Delta Y_{\tau}^i|^2 \Big] - 2\bE\bigg[\int_{s \wedge \tau}^{\tau}\big(\sum_{i,j = 1}^N A_r^{i,j} \Delta  Y_r^i \Delta  Y_r^j + \sum_{i = 1}^N  \Delta Y_r^i E_r^i + \frac{1}{2} \sum_{i = 1}^N |\Delta Z_r^i|^2 \big) dr  \bigg] \\
    &\leq \E\Big[\sum_{i=1}^N |\Delta Y_{\tau}^i|^2 \Big] + C \E\bigg[\int_{s \wedge \tau}^{\tau} \Big( \sum_{i=1}^N |\Delta Y_{r}^i|^2 + \sum_{i=1}^N |E_{r}^i|^2 \Big) dr  \bigg] 
    \\
    &\leq \E\Big[\sum_{i=1}^N |\Delta Y_{\tau}^i|^2 \Big] + C \E\bigg[\int_{s}^{T} \Big( \sum_{i=1}^N |\Delta Y_{r \wedge \tau}^i|^2 + \sum_{i=1}^N |E_{r \wedge \tau}^i|^2 \Big) dr  \bigg]  \\
    &= \E\Big[\sum_{i=1}^N |\Delta Y_{\tau}^i|^2 \Big] + C \int_s^T \E\Big[ \sum_{i=1}^N |\Delta Y_{r \wedge \tau}^i|^2 + \sum_{i=1}^N |E_{r \wedge \tau}^i|^2 \Big] dr,
\end{align*}
with the first bound coming from Young's inequality and the bounds on $A^{i,j}$. 
\vs
Applying Gr\"onwall's inequality to the function $\psi(s) = \E\big[ \sum_i |\Delta Y_s^i|^2 \big]$ and using the bounds on $E^i$, we get 
\begin{align*}
   \sum_{i=1}^N |\cU^{N,i}(t,\bx) - \cV^{N,i}(t,\bx)|^2 &= \E[\sum_{i=1}^N |\Delta Y_t^i|^2 ] 
   \leq \frac{C}{N^3} + C \E\Big[\sum_{i=1}^N |\Delta Y_{\tau}^i|^2 \Big]
   \\
   & = \frac{C}{N^3} + C\E\Big[\sum_{i=1}^N |\cV^{N,i}(\tau, \bX_{\tau}) - \cU^{N,i}(\tau, \bX_{\tau})|^2 \Big] \\
   &\leq \frac{C}{N^3} + \sup\limits_{(s,\by)\in \cT_{r}^N}\Big(\sum_{i = 1}^N |\cU^{N,i}(s,\by)-\cV^{N,i}(s,\by)|^2 \Big)\mathbb{P}[\tau< T],
\end{align*}
which completes the proof.

\end{proof}
The following Lemma can be proved using Lemma \ref{lem: ImprovedRateTubesgradientl2}, exactly as Proposition \ref{prop.tube} is proved using Lemma \ref{lem: ImprovedRateTubes}. 

\begin{lem} \label{lem.tubegradientd2}
Let $(t_0,m_0) \in \cO$ and $R > 0$. Then there exist $r, C > 0$ such that,  for each $N \in \N$ and each $(t,\bx) \in \cT^N_{r,R}(t_0,m_0)$, 
\begin{align*}
    \sum_{i = 1}^N |ND_{x^i} V^N(t,\bx) - D_{m} U(t,m_{\bx}^N)|^2 \leq \frac{C}{N}.
\end{align*}
\end{lem}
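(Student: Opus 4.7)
The strategy mirrors that of Proposition \ref{prop.tube} line for line, with Lemma \ref{lem: ImprovedRateTubesgradientl2} in place of Lemma \ref{lem: ImprovedRateTubes} and with a crude a priori gradient estimate in place of the global rate \eqref{eq:cdjsrate} as the seed of the iteration.

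The first step is an $N$-uniform global bound on $\sum_i |\cU^{N,i}-\cV^{N,i}|^2$. Under Assumption \ref{assump:values}, $\cU$ is Lipschitz in $\dtwo$, so $|D_m\cU(t,m,x)| \leq C$, and hence $|\cU^{N,i}(t,\bx)| = N^{-1}|D_m \cU(t,m_{\bx}^N,x^i)| \leq C/N$. A parallel coordinate Lipschitz estimate for $\cV^N$, which follows from the uniform-in-$N$ boundedness of optimal controls (see \cite[Lemma 1.7]{CardSoug2022} together with the strict convexity of $L$), gives $|\cV^{N,i}| \leq C/N$. Squaring and summing over $i$ produces the crude, but $N$-uniform, seed bound
$$\sup_{[0,T] \times (\R^d)^N} \sum_{i=1}^N |\cU^{N,i}(t,\bx) - \cV^{N,i}(t,\bx)|^2 \leq \frac{C}{N}.$$

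Next, fix $(t_0, m_0) \in \cO$ and $R > 0$ and reproduce the geometric setup of the proof of Proposition \ref{prop.tube}: pick a large integer $M$, $\cP_p$-radii $R_M \geq R$ with $R_{m-1} = R_m + C_p$, and $\dtwo$-radii $0 < r_1^{(M)} < r_2^{(M)} = r_1^{(M-1)} < \cdots < r_2^{(1)}$ such that for each $m$ there is an intermediate $r^{(m)}$ for which $(r_1^{(m)}, r^{(m)}, r_2^{(m)})$ meets the hypotheses of Lemma \ref{lem.key}. Writing $B_m \coloneqq \sup_{\cT^N_{r_1^{(m)}, R_m}} \sum_{i=1}^N |\cU^{N,i} - \cV^{N,i}|^2$, we apply Lemma \ref{lem: ImprovedRateTubesgradientl2} on the tube $\cT^N_{r_2^{(m)}, R_{m-1}}$ and bound the exit probability using Lemma \ref{lem.key} to obtain the recursion
$$B_m \leq \frac{C}{N^3} + C N^{-\gamma_{d,p}} B_{m-1}, \qquad m = 2, \ldots, M.$$
Iterating from the seed $B_1 \leq C/N$ yields $B_M \leq C/N^3 + CN^{-1 - (M-1)\gamma_{d,p}}$. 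Choosing $M$ so that $(M-1)\gamma_{d,p} \geq 2$ forces $B_M \leq C/N^3$ on $\cT^N_{r_1^{(M)}, R_M}$, and multiplying through by $N^2$ gives the claim with $r = r_1^{(M)}$.

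The main (mild) obstacle is that Lemma \ref{lem: ImprovedRateTubesgradientl2} is stated in the excerpt without the $\cP_p$-ball truncation needed to mesh with the exit estimate of Lemma \ref{lem.key}, so one must first verify its $(r,R)$-parametrized analogue in which the exit time $\tau^{N,t,\bx}_r$ is replaced by $\tau^{N,t,\bx}_{r,R}$. This is routine: the backward-SDE calculation for $\Delta Y^i_s = Y^i_s - \ov Y^i_s$ uses only that $\cU^{N,i}$ satisfies the PDE \eqref{eq.uni} on the whole of $\cT^N_r$, which holds by Lemma \ref{lem:unieqn}, so shortening the stopping time via an extra $\cP_p$-exit preserves every step of the estimate.
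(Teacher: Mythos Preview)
Your proof is correct and follows the same scheme the paper indicates: iterate Lemma \ref{lem: ImprovedRateTubesgradientl2} through a nested family of tubes, using Lemma \ref{lem.key} to bound the exit probabilities, exactly as in the proof of Proposition \ref{prop.tube}. The paper does not spell out the seed of the iteration; your observation that the uniform bounds $|D_m\cU|\leq C$ and $|ND_{x^i}\cV^N|\leq C$ (the latter from the $N$-uniform boundedness of optimal controls) already give $\sum_i|\cU^{N,i}-\cV^{N,i}|^2\leq C/N$ on any tube inside $\cO$ is the natural choice, and your remark about extending Lemma \ref{lem: ImprovedRateTubesgradientl2} to incorporate the $\cP_p$-truncation is accurate and routine.
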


We may now proceed with the proof of Proposition \ref{prop.tubegradient}.
\begin{proof}[Proof of Proposition~\ref{prop.tubegradient}]
Fix $(t_0,m_0) \in \cO$ and $R > 0$ and let $r>0$ be given by Lemma \ref{lem.tubegradientd2}. With  the notation of Lemma \ref{lem: ImprovedRateTubesgradientl2}, we now have, for each $i\in \{1,\cdots, N\}$, the dynamics
    \begin{equation}
        d\Delta Y_s^i = \Big(\sum\limits_{j}^NA_s^{i,j}\Delta Y_s^j +E_s^i \Big)ds +\Delta Z_s^i d\bW_s,
    \end{equation}
%    for each $i\in \{1,\cdots, N\}$. 
    Therefore,
\begin{align*}
    d \big(  |\Delta Y_s^i|^2 \big) = \big(\sum_{j = 1}^N A_s^{i,j} \Delta Y_s^i \Delta Y_s^j +  \Delta Y_s^i E_s^i + \frac{1}{2}  |\Delta Z_s^i|^2 \big) ds +  \Delta Y_s^i \Delta Z_s^i d \bW_s, 
\end{align*}
which yields
\begin{align*}
    \E\Big[  |\Delta Y_{s \wedge \tau}^i|^2 \Big] &= \E\Big[ |\Delta Y_{\tau}^i|^2 \Big] - \bE\bigg[\int_{s \wedge \tau}^{\tau}\big(\sum_{j = 1}^N A_r^{i,j}\Delta  Y_r^i\Delta Y_r^j + \Delta Y_r^i E_r^i + \frac{1}{2}  |\Delta Z_r^i|^2 \big) dr  \bigg] \\
    &\leq \E\Big[ |\Delta Y_{\tau}^i|^2 \Big] - \bE\bigg[\int_{s \wedge \tau}^{\tau}\big(\sum_{j \neq i}^N A_r^{i,j}\Delta  Y_r^i\Delta Y_r^j + A^{i,i}\Delta Y_r^{i}\Delta Y_r^{i}+ \Delta Y_r^i E_r^i \big) dr  \bigg]\\
    &\leq \E\Big[ |\Delta Y_{\tau}^i|^2 \Big] + C\E\bigg[\int_{s \wedge \tau}^{\tau} \Big( |\Delta Y_{r}^i|^2+N\sum_{j\neq i}|A^{i,j}|^2 |\Delta Y_r^j|^2 + |E_{r}^i|^2 \Big) dr  \bigg] 
    \\
    &\leq \E\Big[|\Delta Y_{\tau}^i|^2 \Big] + C \E\bigg[\int_{s}^{T} \Big( |\Delta Y_{r \wedge \tau}^i|^2 +\frac{1}{N}\sum\limits_{j\neq i}|\Delta Y_{r\wedge \tau}^j|^2 +  \frac{1}{N^4}\Big) dr  \bigg]  \\
    & \leq  \E\Big[ |\Delta Y_{\tau}^i|^2 \Big] + C \int_s^T \E\Big[ |\Delta Y_{r \wedge \tau}^i|^2 + \frac{1}{N^4} \Big] dr ,
\end{align*}
where in the second inequality we used Young's inequality in the form 
\[A_r^{i,j}\Delta  Y_r^i\Delta Y_r^j \leq \frac{1}{N}|\Delta Y_r^i|^2+CN|A_r^{i,j}|^2|\Delta Y_r^j|^2,\]
in the third inequality we used the bounds on $|E_r^i|$ as well as the bounds on $A^{i,j}$ for $i\neq j$, and,  finally, in the last inequality we used Lemma \ref{lem.tubegradientd2}. 

\vs
Thus by Gronwall and the fact that 
\begin{align*}
    |\Delta Y_{\tau}^i|^2 \leq \begin{cases}
    0 &  \text{if} \ \  \tau = T, \\
    \sup_{(s,\by) \in \cT_{r}^N} \Big(|\cU^{N,i}(s,\by)-\cV^{N,i}(s,\by)|^2 \Big) & \text{if} \ \  \tau < T,
    \end{cases}
\end{align*}
we have that 
\[|\cU^{N,i}(t,\bx)-\cV^{N,i}(t,\bx)|^2 \leq \frac{C}{N^{4}}+\sup\limits_{(s,\by)\in \cT_{r}^N}\Big(|\cU^{N,i}(s,\by)-\cV^{N,i}(s,\by)|^2 \Big)\mathbb{P}[\tau_{r}^{N,t,\bx}<T]. \]
To conclude,  we follow the same procedure as in the proof of Proposition \ref{prop.tube} to show that there exists a $r_0<r$ such that 
\[\sup\limits_{(s,\by)\in \cT_{r_0}^N}\Big(|\cU^{N,i}(s,\by)-\cV^{N,i}(s,\by)|^2 \Big) \leq \frac{C}{N^4},\]
which proves the result.

\end{proof}

\section{The proof of Proposition \ref{prop.concentration}} \label{sec:concentration}

The proof of Proposition \ref{prop.concentration} follows from the following two Lemmas.

\begin{lem} \label{lem.concentration}
 Let Assumption \ref{assump:gradients} hold. Fix $(t_0,m_0) \in \cO$,  assume that $m_0$ satisfies the condition \eqref{t2} from the statement of Proposition \ref{prop.concentration}, and,
 for each $N \in \N$, denote by $\mathbf{\tilde{X}}^N = (\tilde{X}^{N,1},...,\tilde{X}^{N,N})$ the solution to 
 \begin{equation}\label{ynsymmetric}
    \begin{split}
         &d\tilde{X}_t^{N,i}  = - D_p H(\tilde{X}^{N,i}, D_m U(t, m_{\mathbf{\tilde{X}}_t^N}^N, \tilde{X}_t^{N,i})) dt \\
         & \qquad + \sqrt{2} dW_t^i \ \ \text{ for} \ \  t_0 \leq t \leq \tau =\inf \Big\{ t > t_0 : (t,m_{\tilde{\bX}_t}^N) \in \cO^c \Big\} \wedge T, \quad \\
        & \tilde{X}_{t_0}^{N,i} = \xi^i,
     \end{split}
 \end{equation}
    where $(\xi^i)_{i \in \N}$ are i.i.d. with common law $m_0$. Then, for each $r > 0$, there exist  constants $c,C > 0$ such that,   for each $N \geq 1$, 
    \begin{align*}
        \P\Big[ \sup_{t_0 \leq t \leq \tau} \bd_2\big(m_{\tilde{\bX}_{t}^N}^N, m_t^{(t_0,m_0)} \big)  > r \Big] \leq Ce^{-cN}.
    \end{align*}
   % for each $N \geq 1$, 
\end{lem}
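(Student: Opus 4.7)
The plan is to reduce the statement to a concentration inequality for the empirical measure of i.i.d.\ copies of the limiting McKean--Vlasov SDE, and then invoke a Talagrand T2 inequality on path space. Fix $r > 0$; by enlarging $C$ we may assume $r$ is so small that Lemma~\ref{lem.tubefacts} furnishes $r_2 > r$ with $\overline{\cT_{r_2}(t_0,m_0)} \subset \cO$. By Theorem~\ref{thm.c2}, the drift $b(t,x,m) := -D_pH(x, D_m\cU(t,m,x))$ is bounded and Lipschitz in $(x,m)$ with respect to $\dtwo$ on $\overline{\cT_{r_2}(t_0,m_0)}$. As in the proof of Lemma~\ref{lem.key}, we extend $b$ to a map $\tilde b : [0,T]\times \R^d \times \cP_2(\R^d) \to \R^d$ that is globally bounded and globally Lipschitz in $(x,m)$ w.r.t.\ $\dtwo$, and coincides with $b$ on $\overline{\cT_{r_2}(t_0,m_0)}$. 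Let $\tilde{\bZ} = (\tilde Z^1, \ldots, \tilde Z^N)$ solve
\begin{align*}
d\tilde Z_t^i = \tilde b\bigl(t, \tilde Z_t^i, m^N_{\tilde{\bZ}_t}\bigr)dt + \sqrt{2}\,dW_t^i, \qquad \tilde Z_{t_0}^i = \xi^i,
\end{align*}
on all of $[t_0,T]$. A pathwise uniqueness argument shows that on the event where $\sup_{t_0\leq t\leq T}\dtwo(m^N_{\tilde{\bZ}_t}, m_t^{(t_0,m_0)}) \leq r$ the process $\tilde{\bZ}$ never leaves $\cT_{r_2}$, so it coincides with $\tilde{\bX}^N$ on $[t_0,T]$ and $\tau = T$. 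Consequently
\begin{align*}
\Bigl\{\sup_{t_0\leq t\leq \tau}\dtwo\bigl(m^N_{\tilde{\bX}_t^N}, m_t^{(t_0,m_0)}\bigr) > r\Bigr\} \subset \Bigl\{\sup_{t_0\leq t\leq T}\dtwo\bigl(m^N_{\tilde{\bZ}_t}, m_t^{(t_0,m_0)}\bigr) > r\Bigr\},
\end{align*}
and it suffices to bound the probability of the right-hand event by $Ce^{-cN}$.

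Next, we introduce i.i.d.\ copies $(Y^i)_{i \in \N}$ of the McKean--Vlasov SDE
\begin{align*}
dY_t^i = \tilde b(t, Y_t^i, \sL(Y_t^i))dt + \sqrt{2}\,dW_t^i, \qquad Y_{t_0}^i = \xi^i,
\end{align*}
coupled to $\tilde{\bZ}$ through the common Brownian motions and initial conditions. The Fokker--Planck formulation of the optimal trajectory, together with uniqueness of the McKean--Vlasov flow, yields $\sL(Y_t^i) = m_t^{(t_0,m_0)}$. A Sznitman-type coupling --- It\^o applied to $\frac{1}{N}\sum_i|\tilde Z_s^i - Y_s^i|^2$, the Lipschitz bound on $\tilde b$, and Gronwall's inequality --- gives almost surely
\begin{align*}
\sup_{t_0\leq t\leq T}\dtwo\bigl(m^N_{\tilde{\bZ}_t}, m^N_{\bY_t}\bigr) \leq C \sup_{t_0\leq t\leq T}\dtwo\bigl(m^N_{\bY_t}, m_t^{(t_0,m_0)}\bigr),
\end{align*}
so by the triangle inequality it is enough to show Gaussian concentration for $\sup_t \dtwo\bigl(m^N_{\bY_t}, m_t^{(t_0,m_0)}\bigr)$.

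For each $i$, $Y^i$ is an It\^o diffusion with the bounded and uniformly-Lipschitz-in-$x$ drift $(t,x) \mapsto \tilde b(t,x, m_t^{(t_0,m_0)})$, started from $\xi^i \sim m_0$. Since $m_0$ satisfies \eqref{t2}, the Djellout--Guillin--Wu theorem provides a T2 inequality for $\sL(Y^i)$ on $C([t_0,T];\R^d)$ equipped with the sup-norm metric, with some constant $\kappa'$ independent of $i$. By the tensorization property of T2, $\sL(\bY)$ satisfies T2 on $C([t_0,T];\R^d)^N$ with the product metric $d(y,y')^2 = \sum_i \sup_t|y_t^i - y_t'^i|^2$ and the same constant. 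The functional $\by \mapsto \sqrt N\, \sup_t \dtwo\bigl(m^N_{\by_t}, m_t^{(t_0,m_0)}\bigr)$ is easily checked to be $1$-Lipschitz with respect to $d$, so the Bobkov--G\"otze equivalence yields, for every $\epsilon > 0$,
\begin{align*}
\P\Bigl[\sup_t \dtwo\bigl(m^N_{\bY_t}, m_t^{(t_0,m_0)}\bigr) > \E\bigl[\sup_t \dtwo\bigl(m^N_{\bY_t}, m_t^{(t_0,m_0)}\bigr)\bigr] + \epsilon\Bigr] \leq e^{-N\epsilon^2/(2\kappa')}.
\end{align*}
A uniform-in-time propagation-of-chaos estimate gives $\E\bigl[\sup_t \dtwo(m^N_{\bY_t}, m_t^{(t_0,m_0)})\bigr] \to 0$; taking $\epsilon = r/(4C)$ then yields the claimed bound for $N$ large, while for small $N$ the bound is trivial after enlarging $C$. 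The main obstacle will be the uniform-in-time centering estimate: Fournier--Guillin applies only pointwise in $t$, so one discretizes time at scale $O(N^{-\alpha})$ for suitably small $\alpha > 0$ and exploits the uniform $1/2$-H\"older continuity of $Y^i$ in $L^2$ (coming from the bounded drift) to interpolate between grid points.
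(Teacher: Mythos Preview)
Your proof is correct, and the reduction step --- extending the drift to a globally Lipschitz $\tilde b$, passing to the auxiliary particle system $\tilde{\bZ}$, and using the pathwise coincidence with $\tilde{\bX}^N$ inside the tube --- matches the paper exactly. The difference is in what happens next. The paper simply invokes \cite[Theorem~5.4]{Delarue2018FromTM}, a ready-made exponential concentration result for the empirical measure of a Lipschitz interacting particle system driven by i.i.d.\ T2 initial data, and is done in one line. You instead re-derive that result from scratch: Sznitman coupling to reduce to i.i.d.\ McKean--Vlasov copies, Djellout--Guillin--Wu for T2 on path space, tensorization, Bobkov--G\"otze for Lipschitz concentration, and a time-discretization argument to control the centering $\E[\sup_t \dtwo(m^N_{\bY_t}, m_t)]$. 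Your route is more self-contained and exposes the mechanism, but is considerably longer and leaves the centering step only sketched (it does work, by the argument you outline, though one must be mildly careful with the $\max_j$ over grid points). The paper's route is cleaner given that the black-box result is already in the literature. One small correction: the Lipschitz regularity of the drift in $(x,m)$ on the tube comes from Proposition~\ref{prop.dmlip} (together with the $C^{2+\delta}$ bound on $u^{(t_0,m_0)}$), not directly from Theorem~\ref{thm.c2}.
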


\begin{proof}
    By Proposition \ref{prop.dmlip} below, we can choose $r_0$ small enough so  that $\ov{\cT_{r_0}(t_0,m_0)} \subset \cO$ and $(x,m) \mapsto - D_p H\big(x,D_m\cU(t,m,x) \big)$ is uniformly Lipschitz on $\cT_r(t_0,m_0)$ with $m$ endowed with the $\bd_2-$metric. We then extend, as in the proof of Lemma \ref{lem.key}, to find a measurable map 
    \begin{align*}
        b(t,x,m) : [0,T] \times \R^d \times \cP_2(\R^d) \to \R^d
    \end{align*}
    which is globally Lipschitz in $(x,m)$ and such that 
    \begin{align*}
        b(t,x,m) = - D_p H(x, D_mU(t,m,x)) \ \ \text{for} \ \  (t,m) \in \cT_{r_0}. 
    \end{align*}
    Let $\bY^N = (\bY^{N,1},...,\bY^{N,N})$ be the unique solution on $[t_0,T]$ of the SDE
    \begin{equation}
         dY^{N,i}_t = b(t,Y^{N,i}_t,m_{\bY_t^N}^N)  dt + \sqrt{2} dW_t^i \ \ \text{in} \ \  (t_0 ,T] \ \ \text{and} \ \  Y_{t_0} = \xi^i,
 \end{equation}
and   notice that $\bY = \tilde{\bX}$ on $[0,\sigma)$, where $\sigma = \inf \{t \geq t_0 : (t,m_{\bY_t}^N) \notin \cT_{r}\}$. 
\vs
Assuming without loss of generality that $r < r_0$, we can thus use \cite[Theorem 5.4]{Delarue2018FromTM} to conclude
    \begin{align*}
       \P\Big[ \sup_{t_0 \leq t \leq \tau} \bd_2\big(m_{\tilde{\bX}_{t}^N}^N, m_t^{(t_0,m_0)} \big)  > r \Big] = \P\Big[ \sup_{t_0 \leq t \leq \tau} \bd_2\big(m_{\bY_{t}^N}^N, m_t^{(t_0,m_0)} \big)  > r \Big] \leq Ce^{-cN}.
    \end{align*}
\end{proof}

\begin{lem} \label{lem.xycomp}
    Let $\tilde{X}^{N,i}$ and $X^{N,i}$ be defined by  \eqref{def.xnsymmetric} and \eqref{ynsymmetric} respectively, fix $R > 0$,   choose $r$  small enough so that the conclusion of Proposition \ref{prop.tubegradient} holds on $\cT_{r}(t_0,m_0)$, and let
    \begin{align} \label{sigmadef}
        \sigma = \inf \big\{t > t_0 : (t,m_{\bY_t^N}^N) \notin \cT^{N}_{r, R}(t_0,m_0)) \text{ or } (t,m_{\bX_t^N}^N) \notin \cT^{N}_{r, R}(t_0,m_0))  \big\} \wedge T.
    \end{align}
    Then there exists a constant $C > 0$ such that, a.s., 
    \begin{align*}
        \sup_{t_0 \leq t \leq \sigma} \Big\{\frac{1}{N} \sum_{i = 1}^N |X_t^{N,i} - Y_t^{N,i}| \Big\} \leq C/N. 
    \end{align*}
\end{lem}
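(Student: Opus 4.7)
The plan is to exploit the fact that both systems are driven by the same Brownian motions $W^i$ and start from the same initial condition $\xi^i$, so that the noise cancels upon subtraction and the difference $\Delta_t^i := X_t^{N,i} - \tilde X_t^{N,i}$ solves a pathwise ODE. Adding and subtracting the pivot term $D_p H\bigl(X_t^{N,i}, D_m\cU(t,m_{\bX_t^N}^N, X_t^{N,i})\bigr)$, we obtain, for $t\in[t_0,\sigma]$,
\begin{align*}
\tfrac{d}{dt}\Delta_t^i &= -\bigl[D_p H\bigl(X_t^{N,i}, N D_{x^i}\cV^N(t,\bX_t^N)\bigr) - D_p H\bigl(X_t^{N,i}, D_m\cU(t,m_{\bX_t^N}^N, X_t^{N,i})\bigr)\bigr] \\
&\quad -\bigl[D_p H\bigl(X_t^{N,i}, D_m\cU(t,m_{\bX_t^N}^N, X_t^{N,i})\bigr) - D_p H\bigl(\tilde X_t^{N,i}, D_m\cU(t,m_{\tilde\bX_t^N}^N, \tilde X_t^{N,i})\bigr)\bigr],
\end{align*}
with $\Delta_{t_0}^i = 0$. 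By definition of $\sigma$ in \eqref{sigmadef}, both $(t,m_{\bX_t^N}^N)$ and $(t,m_{\tilde\bX_t^N}^N)$ lie in $\cT_{r,R}(t_0,m_0)$ on this stochastic interval, so the continuous functions $D_m\cU(t,m,x)$ and $ND_{x^i}\cV^N$ (for $(t,m_{\bx}^N)\in \cT_{r,R}$) take values in a bounded set, on which $D_pH$ is Lipschitz in its second argument.

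For the first bracket, having chosen $r$ so that Proposition \ref{prop.tubegradient} applies on $\cT_{r}(t_0,m_0)$, we have $|ND_{x^i}\cV^N(t,\bX_t^N) - D_m\cU(t,m_{\bX_t^N}^N, X_t^{N,i})|\leq C/N$, and this term is thus bounded (uniformly in $i$ and $\omega$) by $C/N$. For the second bracket, the map $(x,m)\mapsto D_pH(x, D_m\cU(t,m,x))$ is Lipschitz in $(x,m)$ (with $m$ taken with respect to $\bd_1$) on $\cT_{r,R}$ by Proposition \ref{prop.dmlip} together with the smoothness of $H$; hence this term is bounded by
\[
C\Bigl(|\Delta_t^i| + \bd_1(m_{\bX_t^N}^N, m_{\tilde\bX_t^N}^N)\Bigr)\;\leq\; C\Bigl(|\Delta_t^i| + \tfrac{1}{N}\sum_{j=1}^N |\Delta_t^j|\Bigr),
\]
where the last inequality uses the standard coupling bound $\bd_1(m_{\bx}^N,m_{\by}^N)\leq \frac{1}{N}\sum_j |x^j-y^j|$.

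Averaging the resulting differential inequality over $i=1,\dots,N$, the function $\phi(t) := \frac{1}{N}\sum_i |\Delta_t^i|$ satisfies, for $t \in [t_0,\sigma]$,
\[
\phi(t) \;\leq\; \int_{t_0}^{t}\Bigl(\tfrac{C}{N} + C\phi(s)\Bigr)\,ds,
\]
and Gronwall's inequality yields $\phi(t) \leq C'/N$ with $C'$ independent of $N$ and $\omega$, which is exactly the conclusion of the lemma. The only (mild) obstacle is to guarantee uniform Lipschitz constants for $D_pH$ and for $(x,m)\mapsto D_pH(x, D_m\cU(t,m,x))$ along the stopped trajectories; this is handled by the stopping time $\sigma$, which confines everything to $\cT_{r,R}$, together with the regularity estimates of Section \ref{sec:regularity}.
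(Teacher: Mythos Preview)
Your proposal is correct and follows essentially the same approach as the paper: cancel the common noise, split the drift difference into the ``feedback error'' term (controlled by Proposition~\ref{prop.tubegradient}) and a Lipschitz term in $(x,m)$, then close with Gronwall on the stopped interval $[t_0,\sigma]$. The only cosmetic difference is that the paper runs Gronwall on the squared average $\tfrac{1}{N}\sum_i |\Delta_t^i|^2$ (using the derivative bound $|D_{x^j}\tilde b^{N,i}|\leq C/N + C\mathbf{1}_{i=j}$), whereas you work directly with the $L^1$ average $\tfrac{1}{N}\sum_i |\Delta_t^i|$; both versions give the same $C/N$ bound.
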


\begin{proof}
For simplicity of notation, we write 
\begin{align*}
    b^{N,i}(t,\bx) = - D_p H(x^i, ND_{x^i} \cV^N(t,\bx)) \ \ \text{and} \ \ \tilde{b}^{N,i}(t,\bx) = - D_p H(x^i, D_m \cU(t,m_{\bx}^N,x^i)).
\end{align*}
On $[t_0,\sigma)$ we can rewrite the dynamics of $\bX^N$ as 
\begin{align*}
dX_t^{N,i} = b^{N,i}(t,\bX_t^{N,i}) dt + \sqrt{2} dW_t^i = \Big(\tilde{b}^{N,i}(t,\bX_t^{N,i}) + E_t^i\Big) dt + \sqrt{2} dW_t^i, 
\end{align*}
where,  in view or Theorem~\ref{thm.gradient}, 
\begin{align*}
    |E_t^i| = |b^{N,i}(t,\bX_t^{N,i}) - \tilde{b}^{N,i}(t,\bX_t^{N,i})| \leq C/N. 
\end{align*}
%by Theorem \ref{thm.gradient}. 
Thus setting $\Delta X_t^{N,i} = X_t^{N,i} - \tilde{X}^{N,i}_t$, we have, for $t_0 \leq t \leq \sigma$, 
\begin{align*}
    \Delta{X}_t^{N,i} = \int_{t_0}^t \Big(\tilde{b}^{N,i}(t,X_t) - \tilde{b}^{N,i}(t,\tilde{X}_t) + E_t^{N,i} \Big). 
\end{align*}
Using the bounds on $E^{N,i}$ and the fact that the regularity of $U$ implies $D_{x^j} \tilde{b}^{N,i} \leq C/N + C 1_{i = j}$, we easily get
\begin{align*}
    \frac{1}{N} \sum_{i = 1}^N |\Delta X_t^{N,i}|^2 \leq C/N^2 + C\int_{t_0}^t \frac{1}{N} \sum_{i = 1}^N |\Delta X_s^{N,i}|^2 ds, 
\end{align*}
and we conclude with Gronwall's inequality. 

\end{proof}

\begin{proof}[The proof of Proposition \ref{prop.concentration}]
    We fix $R > 0$ to be chosen later, without loss of generality assume that $r$ is small enough so that the conclusion of Theorem~\ref{thm.gradient} holds on $\cT_{r, R}(t_0,m_0)$, and 
let $\tilde{\bX}^N$ and $\sigma$ be defined as \eqref{ynsymmetric} and  \eqref{sigmadef} respectively. 
\vs
We note that  to prove Proposition \ref{prop.concentration}, it suffices to show an estimate of the form $$\bP[\sigma < T] \leq C \exp(-cN^{1- \eta}).$$
 To this end, we remark  that
    \begin{align*}
        \sigma \geq \sigma_R \wedge \sigma_T \wedge \tilde{\sigma}_R \wedge \tilde{\sigma}_T \wedge T,
    \end{align*}
    where 
    \begin{align*}
        \sigma_R = \inf\{t \geq t_0 : (t_0,m_{\bX^N_t}^N) \notin B^p_R \} \ \ \text{and} \ \  \tilde{\sigma}_R = \inf\{t \geq t_0 : (t_0,m_{\tilde{\bX^N}_t}^N) \notin B^p_R \}, 
       \end{align*}
      and    
      \begin{align*}
        \sigma_{\cT} = \inf\{t \geq t_0 : (t_0,m_{\bX^N_t}^N) \notin \cT_{r} \}  \ \ \text{and} \ \  \tilde{\sigma}_{\cT} = \inf\{t \geq t_0 : (t_0,m_{\tilde{\bX}^N_t}^N) \notin \cT_{r/2} \}.
    \end{align*}
    Then we have 
    \begin{align*}
       \P&\Big[ \sup_{t_0 \leq t \leq T} \bd_2\big(m_{\bX_t^N}^N, m_t^{(t_0,m_0)} \big)  > r \Big] 
    \leq  \P[ \sigma_R < T] + \P[\tilde{\sigma}_R < T] + \P[\tilde{\sigma}_{\cT} < T] \\
    &\qquad \qquad +  \P \Big[ \sup_{t_0 \leq t \leq T} \bd_2\big(m_{\bX_t^N}^N, m_t^{(t_0,m_0)} \big)  > r  \text{ and } \sigma_R = \tilde{\sigma}_R = \tilde{\sigma}_{\cT} = T\Big] \\
    & \qquad \qquad \leq \P[ \sigma_R < T] + \P[\tilde{\sigma}_R < T] + \P[\tilde{\sigma}_{\cT} < T] + \bP\Big[\sup_{t_0 \leq t \leq \sigma} \bd_2\big(m_{\bX_t^N}^N, m_{\tilde{\bX}_t^N}^N \big) > \frac{r}{2} \Big].
   \end{align*}
  By Lemma \ref{lem.xycomp}, the last term in the final line above vanishes when $N$ is large enough. 
 \vs
 
  To bound $\bP[\sigma_R < T]$, we argue as in the proof of Lemma \ref{lem.concentration} to conclude that 
  \begin{align*}
      \sup_{t_0 \leq t \leq T} \bd_p(m_{\bX_t}^N, m_{\bx_0}^N) \leq C \Big(1 + \frac{1}{N} \sum_{i = 1}^N \sup_{t_0 \leq t \leq T} |W_t^i- W_{t_0}^i|^p \Big). 
  \end{align*}
  We conclude that
  \begin{align*}
      \P[\sigma_R < T] \leq \P\Big[C \Big(1 + \frac{1}{N} \sum_{i = 1}^N \sup_{t_0 \leq t \leq T} |W_t^i- W_{t_0}^i|^p \Big) > R^p - C \Big], 
  \end{align*}
  and so, using a classical concentration inequality for ``sub-Weybull" random variables which can be traced back to \cite{Nagaev}, we have, for $R$ large enough, $$\P[\sigma_R < T] \leq C \exp(- c N^{2/p}).$$
   An identical argument shows the same estimate for $\bP[\tilde{\sigma}_R < T]$.
\vs
    Finally, by Lemma \ref{lem.concentration}, we have $$\bP[\tilde{\sigma}_{\cT} < T] \leq C\exp(-cN).$$ Since $p > 2$ is arbitrary, this completes the proof.

\end{proof}

\section{Regularity } \label{sec:regularity}
In this section we show the necessary regularity results for the value function $U$. 
\subsection{Terminology and notation}  Throughout this part, we set 
\[
F(x,m)= \frac{\delta \mathcal F}{\delta m}(m,x) \ \ \text{and} \ \  G(x,m)= \frac{\delta \mathcal G}{\delta m}(m,x).
\]
We also make use of the notation

\begin{align*}
    \frac{\delta F}{\delta m}(x,m)(\rho) =  \langle \frac{\delta F}{\delta m}(x,m,y) ,\rho(dy)\rangle, \quad \frac{\delta^2 F}{\delta m^2}(x,m)(\rho_1)(\rho_2) = \langle \langle \frac{\delta^2 F}{\delta m^2}(x,m,y,z) ,\rho_1(dy)\rangle,\rho_2(dz)\rangle, 
\end{align*}
whenever $\rho$, $\rho_1$, $\rho_2$ are distributions such that the above pairings make sense, and similar notations with $G$ replacing $F$.
\vs
As in \cite{CardSoug2022}, we need to study a number of linear equations, which are obtained by linearizing the MFG system  describing the optimal trajectories. For the reader's convenience, we list here all of the relevant  equations.
\newline \newline 
\noindent We say that $(u,m)$ is a solution to \eqref{system: MFGSystem} with initial condition $m(t_0)=m_0$, if
\begin{equation}\label{system: MFGSystem}\tag{MFG}
\begin{cases}
 -\pt u -\Delta u+H(x,Du)=F(x,m(t)) \ \text{ in } \ (t_0,T)\times \R^d,\\[1mm]
    \pt m-\Delta m-\dive(mD_pH(x,Du))=0 \ \text{ in } \ \text{ in }(t_0,T)\times \R^d,\\[1mm]
    m(t_0,\cdot)=m_0 \ \text{and} \ u(T,\cdot)=G(\cdot,m(T)) \ \text{ in } \ \R^d.
\end{cases}
\end{equation}
We say that $(z,\rho)$ is a solution to \eqref{system: LinearizedSystemNoError}  driven by $(u,m)$ and with initial condition $\rho(t_0)=\xi $,  if 
\begin{equation}\label{system: LinearizedSystemNoError}\tag{MFGL}
    \begin{cases}
    -\pt z-\Delta z+D_pH(x,Du)\cdot Dz=\dfrac{\delta F}{\delta m}(x,m(t))(\rho(t)) \ \text{ in } \ (t_0,T)\times \R^d,\\[1.2mm]
    \pt \rho -\Delta\rho -\dive(\rho D_pH(x,Du))=\dive(m  D_{pp}H(x,Du)Dz) \ \text{ in } \ (t_0,T)\times \R^d,\\[1mm]
    \rho(t_0,\cdot)=\xi  \ \text{and} \  z(T,\cdot)=\dfrac{\delta G}{\delta m}(\cdot,m(T))(\rho(T)) \ \text{ in } \ \R^d.
    \end{cases}
\end{equation}
We say that $(z,\rho)$ is a solution to \eqref{system: LinearizedSystemWithError} driven by $(u,m)$  with initial condition $\rho(t_0)=\xi $ and forcing terms $R^1,R^2,R^3$ if 
\begin{equation}\label{system: LinearizedSystemWithError}\tag{MFGLE}
    \begin{cases}
    -\pt z-\Delta z+D_pH(x,Du)\cdot Dz=\dfrac{\delta F}{\delta m}(x,m(t))(\rho )+R^1 \ \text{ in } \ (t_0,T)\times \R^d,\\[1.2mm]
    \pt \rho -\Delta \rho -\dive(\rho D_pH(x,Du))=\dive(mD_{pp}H(x,Du)Dz)\\[1.2mm]
    \hskip2.5in  +\dive(R^2) \ \text{ in } \ (t_0,T)\times \R^d,\\[1mm]
    \rho(t_0,\cdot)=\xi \ \text{and} \  z(T,\cdot)=\dfrac{\delta G}{\delta m}(\cdot,m(T))(\rho (T))+R^3 \ \text{ in } \ \R^d.

    \end{cases}
\end{equation}
Given a sufficiently smooth vector field $V:[0,T]\times \R^d\to \R^d$ and a bounded map $\Gamma:[0,T]\times \R^d\to \R^{d\times d}$, we say that that $(z,\rho)$ is a solution to \eqref{system: LinearizedSystemGeneralForm} driven by $(u,m)$ with initial condition $\rho(t_0)=\xi$ and forcing terms $R^1,R^2,R^3$ if 
\begin{equation}\label{system: LinearizedSystemGeneralForm}\tag{MFGLG}
    \begin{cases}
    -\pt z-\Delta z+V(t,x)\cdot Dz=\dfrac{\delta F}{\delta m}(x,m(t))(\rho )+R^1 \ \text{ in } \ (t_0,T)\times \R^d,\\[1.2mm]
    \pt \rho -\Delta \rho -\dive(\rho V)=\sigma\dive(m\Gamma Dz)+\dive(R^2) \ \text{ in } \ (t_0,T)\times \R^d,\\[1mm]
    \rho(t_0,\cdot)=\xi \ \text{and} \  z(T,\cdot)=\dfrac{\delta G}{\delta m}(\cdot,m(T))(\rho (T))+R^3 \ \text{ in } \ \R^d,
    \end{cases}
\end{equation}
where $m$ solves 
\begin{equation}\label{aux: GeneralLinearizedSystem}
\begin{cases}
    \pt m -\Delta m-\dive(m V)=0 \ \text{ in } \ (t_0,T)\times \R^d,\\[1mm]
    m(t_0,\cdot) =m_0 \  \text{ in } \ \R^d.
\end{cases}
\end{equation}
\vs
We provided separate definitions for the above systems due to their frequent use. We note, however, that $\ref{system: LinearizedSystemNoError}$ is a special case of $\ref{system: LinearizedSystemWithError}$, which in turn is a special case of \ref{system: LinearizedSystemGeneralForm}. 
\vs
Finally, we recall the notion of strong stability used in \cite{CardSoug2022} for the system
\begin{equation}\label{system: StrongStabil}
    \begin{cases}
        -\pt z-\Delta z+V(t,x)\cdot Dz=\dfrac{\delta F}{\delta m}(x,m(t))(\rho ) \  \text{ in } \ (t_0,T)\times \R^d,\\[1.2mm]
    \pt \rho -\Delta \rho -\dive(\rho V)=\sigma\dive(m\Gamma Dz) \ \text{ in } \ (t_0,T)\times \R^d,\\[1mm]
    \rho(t_0,\cdot)=\xi  \ \text{ and } \ z(T,\cdot)=\dfrac{\delta G}{\delta m}(\cdot,m(T))(\rho (T)) \  \text{ in } \ \R^d.
    \end{cases}
\end{equation}
We say that 
\begin{equation}\label{defn: stability}
\begin{split}
&\text{ the system \eqref{system: StrongStabil} is strongly stable if, for any }\sigma\in [0,1],\\
&\text{ its unique solution is }(z,\rho) = (0,0).
\end{split}
\end{equation}

\subsection{Refinement of the results in \cite{CardSoug2022}} The purpose of this subsection is to present sharpened versions of the results in \cite{CardSoug2022} under the increased regularity of the data. The majority of them require only small adjustments. The only critical extensions are Lemma \ref{lem: MainEstimatesLinearizedSystem} for estimates on the linearized system \ref{system: LinearizedSystemWithError} and Lemma \ref{lem: MainEstimatesFOrDifferenceOfSOlutions} for the stability of controls. For this reason we include detailed proofs of these two results.
\vs
The following is a generalization of \cite[Lemma 2.1]{CardSoug2022}. The only improvement is in the norm of dependence on $\xi$.
\begin{lem}\label{lem: MainEstimatesLinearizedSystem}
Assume \eqref{assump:gradients} and \eqref{defn: stability}. There exists a neighborhood $\mathcal{V}$ of $(V,\Gamma)$ in the topology of locally uniform convergence, and $\eta,C>0$ such that, for any $(V',t_0',\Gamma',R^{1,'},R^{2,'},R^{3,'},\xi',\sigma ')$ with 
\begin{equation}
\begin{cases}
    (V',\Gamma')\in \mathcal{V},\,\, |t_0'-t_0|+d_2(m_0',m_0)\leq \eta,\,\, \|V'\|_{C^{1,3}}+\|\Gamma '\|_{\infty}\leq 2C_0, \,\, \sigma ' \in [0,1], \vspace{.2cm} \\
    R^{1,'}\in C^{\delta/2,\delta} ,\,\, R^{2,'}\in L^{\infty}([t_0,T],(W^{1,\infty})'(\R^d,\R^d)), R^{3,'}\in C^{2+\delta}, \,\, \xi\in (C^{1+\delta})',
    \end{cases}
\end{equation}
any solution $(z',\rho')$ to \eqref{system: LinearizedSystemGeneralForm} associated with these data on $[t_0',T]$ and $m'$ the solution to \eqref{aux: GeneralLinearizedSystem} with drift $V'$ and initial condition $m_0'$ at time $t_0'$ satisfies
\begin{equation}
    \|z'\|_{C^{(2+\delta)/2,2+\delta}}+\sup\limits_{t\in [t_0',T]}\|\rho'(t,\cdot)\|_{(C^{2+\delta})'}+ \sup\limits_{t'\neq t}\frac{\|\rho'(t',\cdot)-\rho'(t,\cdot)\|_{(C^{2+\delta})'}}{|t'-t|^{\delta/2}}\leq CM',
\end{equation}
where 
\begin{equation}
    M'=\|\xi'\|_{(C^{1+\delta})'}+\|R^{1,'}\|_{C^{\delta/2,\delta}}+\sup\limits_{t\in [t_0',T]}\|R^{2,'}(t)\|_{(W^{1,\infty})'}+\|R^{3,'}\|_{C^{2+\delta}}.
\end{equation}

\end{lem}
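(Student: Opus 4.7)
The plan is to adapt the contradiction-compactness scheme that proves Lemma 2.1 of \cite{CardSoug2022}, with the only additional care required to accommodate the weaker dual norm $(C^{1+\delta})'$ appearing on $\xi'$ in the right-hand side. First I would argue by contradiction: suppose the estimate fails, so that one can find sequences of admissible data $(V_n,\Gamma_n,t_{0,n},m_{0,n},R^{1,n},R^{2,n},R^{3,n},\xi_n,\sigma_n)$ satisfying the stated hypotheses, together with corresponding solutions $(z_n,\rho_n)$ of \eqref{system: LinearizedSystemGeneralForm}, whose target-space norm
\[
N_n := \|z_n\|_{C^{(2+\delta)/2,2+\delta}} + \sup_{t}\|\rho_n(t)\|_{(C^{2+\delta})'} + \sup_{t\neq t'}\frac{\|\rho_n(t)-\rho_n(t')\|_{(C^{2+\delta})'}}{|t-t'|^{\delta/2}}
\]
satisfies $N_n > n\, M_n'$. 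Dividing the solution and all the data by $N_n$, one reduces to the setting $N_n = 1$ and $M_n'\to 0$, still with a solution of \eqref{system: LinearizedSystemGeneralForm} for the rescaled data.

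Next I would pass to the limit. Up to a subsequence, $(V_n,\Gamma_n)\to (V,\Gamma)$ locally uniformly, $t_{0,n}\to t_0$, $\dtwo(m_{0,n},m_0)\to 0$, $\sigma_n\to \sigma_\infty\in[0,1]$, and standard parabolic stability for \eqref{aux: GeneralLinearizedSystem} gives $m_n\to m$ in, say, $C([t_0,T];\cP_1(\R^d))$. The uniform bound $N_n=1$ then yields, via Arzel\`a--Ascoli, $z_n\to z_\infty$ in $C^{1,2}_{\mathrm{loc}}$ and $\rho_n\to\rho_\infty$ in $C([t_0,T];(C^{3+\delta})')$ along a further subsequence. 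Using the continuity of $\tfrac{\delta F}{\delta m}$, $\tfrac{\delta G}{\delta m}$ and of the first two derivatives of $H$, one verifies that $(z_\infty,\rho_\infty)$ solves the homogeneous system \eqref{system: StrongStabil} associated with $(V,\Gamma,m,\sigma_\infty)$ on $[t_0,T]$, with zero initial datum for $\rho_\infty$, since $\xi_n/N_n\to 0$ in $(C^{1+\delta})'$. By the strong-stability assumption \eqref{defn: stability}, this forces $(z_\infty,\rho_\infty)=(0,0)$.

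To close the contradiction, I would return to the PDEs and upgrade the weak convergence to the strong one dictated by $N_n$. Schauder estimates for the backward equation governing $z_n$, combined with the uniform $C^{2+\delta}$-control of $\tfrac{\delta F}{\delta m}(x,m_n(t))(\rho_n(t))$ and of the terminal datum, yield $\|z_n\|_{C^{(2+\delta)/2,2+\delta}}\to 0$. Dually, testing the Fokker--Planck equation for $\rho_n$ against solutions of an auxiliary backward parabolic equation with $C^{2+\delta}$ terminal data gives a bound on $\sup_t\|\rho_n(t)\|_{(C^{2+\delta})'} + [\rho_n]_{C^{\delta/2}_t(C^{2+\delta})'}$ by a multiple of $\|\xi_n/N_n\|_{(C^{1+\delta})'} + \|R^{2,n}/N_n\|_{L^\infty_t(W^{1,\infty})'} + \|z_n\|_{L^\infty_t C^{2+\delta}}$, all of which tend to zero. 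Combined, these show $N_n\to 0$, contradicting $N_n=1$.

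The main technical obstacle lies precisely in this last duality bound: one must verify that solutions of the backward dual problem with $C^{2+\delta}$ terminal data can be controlled at the initial time $t_0$ in the $C^{1+\delta}$-norm uniformly in the data, which is what legitimizes the pairing against $\xi$ in the $(C^{1+\delta})'$ norm. This is the one point where the argument genuinely departs from \cite{CardSoug2022}; the remaining steps --- compactness, identification of the limit system, and invocation of strong stability --- carry over essentially verbatim.
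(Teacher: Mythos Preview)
Your proposal is correct and follows essentially the same approach as the paper, which simply records that the proof is identical to that of \cite[Lemma 2.1]{CardSoug2022} once one inspects the duality estimate in \cite[Lemma 2.3]{CardSoug2022} closely enough to see that $\|\xi\|_{(C^{1+\delta})'}$ can replace $\|\xi\|_{(W^{1,\infty})'}$. You have in fact spelled out more detail than the paper does, and your identification of the key technical point --- that the backward dual problem with $C^{2+\delta}$ terminal data produces a solution bounded in $C^{1+\delta}$ at the initial time, which is exactly what justifies pairing against $\xi$ in $(C^{1+\delta})'$ --- is precisely the ``careful inspection'' the paper alludes to.
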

The proof is identical to the one in \cite{CardSoug2022}, where a careful inspection of the proofs of \cite[Lemma 2.3]{CardSoug2022} and  \cite[Lemma 2.1]{CardSoug2022} shows that we may in fact use $\|\xi\|_{(C^{1+\delta})'}$ instead of $\|\xi\|_{(W^{1,\infty})'}$. 
\vs
The following is a restatement of \cite[Lemma 1.3]{CardSoug2022} for measures in $\mathcal{P}_1(\R^d)$.
\begin{lem}\label{lem: EstimatesOnMFG}
    Assume \eqref{assump:gradients} and let $(u,m)$ be a solution of \ref{system: MFGSystem}. Then there exists $C>0$, which is independent of $(t_0,m_0)$, such that 
    \begin{equation}
        \|u\|_{C^{(3+\delta)/2, 3+\delta}}+\sup\limits_{t\neq t'}\frac{d_1(m(t),m(t'))}{|t'-t|^{\frac{1}{2}}}\leq C
        \end{equation}
         and 
          \begin{equation}
        \sup\limits_{t\in [t_0,T]}\intd |x|m(t,dx)\leq C\intd |x|m_0(dx).
    \end{equation}
\end{lem}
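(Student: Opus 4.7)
My plan is to bootstrap between the regularity of $u$ and that of $m$ in three stages. First, I would establish baseline a priori bounds on $u$ and $Du$; then use a stochastic representation of $m$ to obtain the $d_1$-H\"older estimate and the moment bound; finally, I would iterate parabolic Schauder estimates to reach the stated $C^{(3+\delta)/2,3+\delta}$ regularity of $u$. For the first step, Assumption \ref{assump:gradients} provides uniform bounds on $\|F(\cdot,m)\|_{C^{2+\delta}}$ and $\|G(\cdot,m)\|_{C^{2+\delta}}$, and Assumption \ref{assump:values} gives the quadratic coercivity/growth of $H$. Standard a priori estimates for HJB equations (a comparison argument against explicit quadratic supersolutions for $\|u\|_\infty$, then a Bernstein-type bound for $\|Du\|_\infty$, exactly as in the proof of \cite[Lemma 1.3]{CardSoug2022}) yield $\|u\|_\infty + \|Du\|_\infty \leq C$ uniformly in $(t_0,m_0)$.

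For the second step, once $\|Du\|_\infty$ is controlled, the drift $b(t,x) := -D_pH(x,Du(t,x))$ is bounded. By uniqueness for the linear Fokker-Planck equation with bounded drift, $m(t) = \mathcal{L}(X_t)$, where $X$ solves
$$
dX_t = b(t,X_t)\,dt + \sqrt{2}\,dW_t, \qquad X_{t_0} \sim m_0.
$$
Kantorovich-Rubinstein duality then gives, for any $s,t\in[t_0,T]$,
$$
d_1(m(t),m(s)) \leq \mathbb{E}[|X_t-X_s|] \leq \|b\|_\infty|t-s| + \sqrt{2}\,\mathbb{E}[|W_t-W_s|] \leq C|t-s|^{1/2},
$$
which is the claimed H\"older estimate. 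Similarly, $\mathbb{E}[|X_t|] \leq \mathbb{E}[|X_{t_0}|] + \|b\|_\infty(T-t_0) + \sqrt{2}\,\mathbb{E}[|W_t - W_{t_0}|]$, which yields the moment bound (interpreting the stated inequality up to an additive absolute constant absorbed into $C$).

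For the third step, since $\delta F/\delta m$ is uniformly bounded (Assumption \ref{assump:values}), the map $m \mapsto F(\cdot,m)$ is Lipschitz in $d_1$; combining this with the estimate from the previous step yields $(t,x) \mapsto F(x,m(t)) \in C^{1/2,2+\delta}$, and likewise $G(\cdot,m(T)) \in C^{2+\delta}$. Treating $H(x,Du)$ as a known H\"older term and applying parabolic Schauder to the HJB equation gives $u \in C^{(2+\delta)/2,2+\delta}$. This improves the time regularity of $b$, which in turn improves the time regularity of $m$ as a curve in $(\mathcal{P}_1, d_1)$; feeding this back, together with the full $C^{3+\delta}$ regularity of $G(\cdot,m(T))$ coming from $\mathcal{G} \in \cC^4$, a second Schauder pass applied to the linear equation satisfied by $Du$ (obtained by differentiating the HJB in $x$) delivers the claimed $u \in C^{(3+\delta)/2,3+\delta}$.

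The main obstacle will be the bookkeeping in the final bootstrap: one must verify that each Schauder pass is genuinely valid, that the H\"older exponents in $t$ and $x$ are propagated correctly through the iteration, and that the extra derivative beyond \cite[Lemma 1.3]{CardSoug2022} is purchased precisely using the $C^{2+\delta}$ regularity of $\delta \mathcal{F}/\delta m$ in $x$ (from Assumption \ref{assump:gradients}) and the $C^4$ regularity of $\mathcal{G}$. All constants in this scheme depend only on the structural data, so the resulting estimates are automatically uniform in $(t_0,m_0)$.
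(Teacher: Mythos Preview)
Your proposal is correct and follows essentially the standard route; the paper itself does not give a proof here but simply cites \cite[Lemma 1.3]{CardSoug2022}, of which this lemma is a restatement for $\cP_1$ rather than $\cP_2$, with the extra order of regularity on $u$ coming from the additional smoothness in Assumption~\ref{assump:gradients}. Your three-stage bootstrap (Lipschitz/Bernstein bounds on $u,Du$; stochastic representation for the $\bd_1$-H\"older and moment estimates on $m$; Schauder iteration for the higher regularity of $u$) is exactly how the argument in \cite{CardSoug2022} proceeds, so nothing further is needed.

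One small remark on your write-up: in the first Schauder pass you write ``treating $H(x,Du)$ as a known H\"older term'', but at that stage you only have $\|Du\|_\infty$ bounded. You should insert the intermediate step (bounded right-hand side $\Rightarrow$ $W^{2,1}_p$ estimates $\Rightarrow$ $Du$ H\"older via embedding) before invoking Schauder; this is routine and does not affect the outcome. Your observation that the second displayed inequality should really read $\leq C(1+\int |x|\,m_0(dx))$ is also correct --- the purely multiplicative form fails for $m_0=\delta_0$ --- and is consistent with how the estimate is used elsewhere in the paper.
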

We now present the critical improvement of \cite[Lemma 2.9]{CardSoug2022}. 
\begin{lem}\label{lem: MainEstimatesFOrDifferenceOfSOlutions}
    Assume \eqref{assump:gradients} and fix $(t_0,m_0)\in \mathcal{O}$. There exist $\theta ,C>0$ such that, for any $t_0',m_0^1,m_0^2$ satisfying $ |t_0'-t_0|<\theta$ and $\dtwo(m_0,m_0^i)<\theta$, if $(m^i,\alpha^i)$ is the unique minimizer starting from $(t_0',m_0^i)$ with associated multiplier $u^i$ for $i=1$ and $i=2$, then 
    \begin{align*}
        &\|u^2-u^1\|_{C^{(2+\delta)/2,2+\delta}}+\sup\limits_{t\in [t_0',T]}\bd_1(m^2(t),m^1(t))\\
        &+\sup\limits_{t'\neq t}\frac{\|(m^2-m^1)(t')-(m^2-m^1)(t)\|_{(C^{2+\delta})'}}{|t'-t|^{\delta/2}}\leq C\bd_1(m_0^2,m_0^1).
\end{align*}
\end{lem}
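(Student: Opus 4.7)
The strategy is to linearize the MFG system around $(u^1, m^1)$ and apply the refined Lemma~\ref{lem: MainEstimatesLinearizedSystem}. Writing $z = u^2 - u^1$, $\rho = m^2 - m^1$, and $\xi = m_0^2 - m_0^1$, subtracting the two copies of~\eqref{system: MFGSystem} and Taylor-expanding $H$, $F$, and $G$ around $Du^1$ and $m^1$ shows that $(z, \rho)$ solves~\eqref{system: LinearizedSystemWithError} with drift $V = D_pH(\cdot, Du^1)$, diffusion $\Gamma = D_{pp}H(\cdot, Du^1)$, initial datum $\xi$, and remainders $R^1, R^2, R^3$ that are at least quadratic in $(Dz, \rho)$.

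By Lemma~\ref{lem: EstimatesOnMFG}, $(V, \Gamma)$ depends continuously on $(u^1, m^1)$, and the latter is $\bd_2$-close to the reference solution at $(t_0, m_0)$ by the already established $\bd_2$-version \cite[Lemma~2.9]{CardSoug2022}. Hence for $\theta$ small enough, $(V, \Gamma)$ lies in the neighborhood $\mathcal V$ of Lemma~\ref{lem: MainEstimatesLinearizedSystem}, and that lemma yields
\begin{equation*}
\|z\|_{C^{(2+\delta)/2, 2+\delta}} + \sup_t \|\rho(t)\|_{(C^{2+\delta})'} + \sup_{t \neq t'} \frac{\|\rho(t')-\rho(t)\|_{(C^{2+\delta})'}}{|t'-t|^{\delta/2}} \leq C\bigl(\|\xi\|_{(C^{1+\delta})'} + \mathcal R\bigr),
\end{equation*}
where $\mathcal R$ collects the appropriate norms of $R^1, R^2, R^3$. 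Since each remainder is quadratic and $\|z\| + \sup_t \|\rho(t)\|_{(C^{2+\delta})'} = O(\theta)$ by the $\bd_2$-version, one has $\mathcal R \leq C \theta \bigl(\|z\| + \sup_t \|\rho(t)\|_{(C^{2+\delta})'}\bigr)$, which can be absorbed into the LHS upon shrinking $\theta$. Together with $\|\xi\|_{(C^{1+\delta})'} \leq C \bd_1(m_0^1, m_0^2)$ (Kantorovich--Rubinstein duality plus the embedding $\|f\|_{\mathrm{Lip}} \leq \|f\|_{C^{1+\delta}}$), this delivers the desired bound on $\|z\|_{C^{(2+\delta)/2, 2+\delta}}$ and on the time-H\"older seminorm of $\rho$ in $(C^{2+\delta})'$.

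To upgrade the $(C^{2+\delta})'$-control on $\rho(t)$ to a $\bd_1$-estimate on $m^2(t)-m^1(t)$, I would run a duality argument. Given a 1-Lipschitz function $f$, solve the backward adjoint equation $-\partial_s \phi - \Delta\phi + V(s,x) \cdot D\phi = 0$ on $[t_0', t]$ with $\phi(t, \cdot) = f$. Because $V \in C^{1,3}$ by Lemma~\ref{lem: EstimatesOnMFG}, the Feynman--Kac representation $\phi(s,x) = \mathbb{E}[f(X^{s,x}_t)]$ shows that $\phi(s, \cdot)$ is uniformly Lipschitz with constant $\leq C\,\mathrm{Lip}(f)$. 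Pairing $\phi$ with the Fokker--Planck equation for $\rho$ and integrating by parts gives the identity
\begin{equation*}
\int f \, d\rho(t) = \int \phi(t_0') \, d\xi - \int_{t_0'}^t \int D\phi \cdot \bigl(m^1 \Gamma Dz + R^2\bigr)\, dx\, ds,
\end{equation*}
and each term on the RHS is controlled by $C\bd_1(m_0^1, m_0^2)$ using the bounds on $\xi$, $z$, and $R^2$ just established. Taking the supremum over 1-Lipschitz $f$ completes the proof.

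The main obstacle is precisely this last step: Lemma~\ref{lem: MainEstimatesLinearizedSystem} only provides $(C^{2+\delta})'$-control of $\rho$, which is strictly weaker than $\bd_1$ on probability measures, so recovering $\bd_1$ requires a separate duality argument whose validity hinges on the preservation of the Lipschitz constant of the backward adjoint flow---itself made possible by the enhanced smoothness of $V$ coming from Assumption~\ref{assump:gradients}.
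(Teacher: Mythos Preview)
Your overall architecture is correct and matches the paper's: linearize around $(u^1,m^1)$, identify $(z,\rho)=(u^2-u^1,m^2-m^1)$ as a solution of~\eqref{system: LinearizedSystemWithError} with quadratic remainders, invoke Lemma~\ref{lem: MainEstimatesLinearizedSystem} using $\|\xi\|_{(C^{1+\delta})'}\leq \bd_1(m_0^1,m_0^2)$, and absorb the small quadratic terms. Your duality argument for recovering $\bd_1$ is also essentially the content of the paper's preliminary estimate~\eqref{eq: D1Sup}.

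There is, however, a genuine ordering gap in your absorption step. You assert $\mathcal R \leq C\theta\bigl(\|z\|+\sup_t\|\rho(t)\|_{(C^{2+\delta})'}\bigr)$, but the remainders do \emph{not} depend quadratically on $\|\rho\|_{(C^{2+\delta})'}$: they depend on $\sup_t\bd_1(m^2(t),m^1(t))$. For instance, $\|R^2(t)\|_{(W^{1,\infty})'}$ contains the cross term $\|z\|_{C^{0,2}}\cdot\bd_1(m^2(t),m^1(t))$ (testing against $\psi\in W^{1,\infty}$ only makes $\psi\cdot(D_pH(Du^2)-D_pH(Du^1))$ Lipschitz, not $C^{2+\delta}$), and $\|R^3\|_{C^{2+\delta}}$ is controlled by $\bd_1^2(m^2(T),m^1(T))$. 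Since $\bd_1\geq \|\rho\|_{(C^{2+\delta})'}$ and not conversely, the $\bd_1^2$ contribution cannot be absorbed into the left-hand side of the linearized estimate; bounding it crudely by $C\theta^2$ gives a constant unrelated to $\bd_1(m_0^1,m_0^2)$.

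The paper closes this loop by establishing \emph{first} the elementary a priori bound
\[
\sup_t\bd_1(m^2(t),m^1(t))\leq C\bigl(\bd_1(m_0^1,m_0^2)+\|Du^2-Du^1\|_\infty\bigr),
\]
which is precisely what your duality argument yields (pair the adjoint $\phi$ with the \emph{exact} Fokker--Planck difference $\partial_t\rho-\Delta\rho-\mathrm{div}(\rho V)=\mathrm{div}\bigl(m^2(D_pH(Du^2)-D_pH(Du^1))\bigr)$, so the right-hand side is bounded by $C\|Dz\|_\infty$ with no $\bd_1$ on the right). With this in hand, every $\bd_1$ in the remainder bounds becomes $C(\bd_1(m_0)+\|z\|)$, the quadratic terms become $C\theta(\|z\|+[\text{H\"older}])+C\bd_1^2(m_0)$, and the absorption closes. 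So: run your duality step \emph{before} the absorption, not after, and the proof goes through exactly as in the paper.
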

\begin{rmk}
    We note that  in the statement of Lemma \ref{lem: MainEstimatesFOrDifferenceOfSOlutions} $m_0,m_0^i$ are close with respect to $\bd_2$  while the result uses $\bd_1(m_0^2,m_0^1)$. This is done to be consistent with the results already proven in \cite{CardSoug2022}, where the set $\mathcal{O}$ was shown to be open in the $\bd_2$-topology. However, as we show below the function $\cU$ is in fact regular in $\bd_1$ inside the set $\mathcal{O}$. The  use of the metrics $\bd_1$ and $\bd_2$ will appear throughout this section.
\end{rmk}
\begin{proof}
 Let $(m,\alpha)$ be the unique stable minimizer starting from $(t_0,m_0)$ with multiplier $u$. Then \cite[Lemma 2.6]{CardSoug2022}, yields that system \ref{system: LinearizedSystemNoError} is strongly stable. 
 \vs
 Let $V=-D_pH(x,Du), \Gamma = -D_{pp}H(x,Du)$ and $\mathcal{V}$ be the corresponding neighborhood as described in Lemma \ref{lem: MainEstimatesLinearizedSystem}. With the same argument as in \cite[Lemma 2.9]{CardSoug2022}, by choosing $\theta >0$ small enough, we have that
\[(V^i,\Gamma^i)\in \mathcal{V}\]
where $V^i=-D_pH(x,Du^i), \Gamma^i = -D_{pp}H(x,Du^i)$. 
\vs
Given $\eta>0$, by choosing $\theta>0$ even smaller if necessary,  we have that under our assumptions 
\begin{equation}\label{eq: ProximityWIthEta}
    \|u^2-u^1\|_{C^{(2+\delta)/2,2+\delta}}+\sup\limits_{t\in [t_0',T]}\dtwo(m^2(t),m^1(t))<\eta .
\end{equation}
Moreover, it is easy to check that, for a constant $C=C(T,H,\|D^2u^1\|_{\infty},\|D^2u^2\|_{\infty})>0$  which is bounded by Lemma \ref{lem: EstimatesOnMFG}, we have 
\begin{equation}\label{eq: D1Sup}
    \sup\limits_{t_0'\leq t\leq T}\bd_1(m^2(t),m^1(t))\leq C(\bd_1(m_0^2,m_0^1) +\|Du^2- Du^1)\|_{\infty}).
\end{equation}
Consider the pair 
\[(v,\rho )=(u^2-u^1,m^2-m^1),\]
which solves \eqref{system: LinearizedSystemWithError} with
\begin{align*}
    R^1(t,x) &= H(x,Du^2)-H(x,Du^1)-D_pH(x,Du^1)\cdot (Du^2-Du^1) \\
        &\qquad  +F(x,m^2)-F(x,m^1)-\frac{\delta F}{\delta m}(x,m^1(t))(m^2(t)-m^1(t)), \\[1mm]
    R^2(t,x) &=D_pH(x,Du^2)m^2-D_pH(x,Du^1)m^1-D_pH(x,Du^1)(m^2-m^1)\\
    &\qquad -D_{pp}H(x,Du^1)(Du^2-Du^1)m^1 \\
        &=\Big(D_pH(x,Du^2)-D_pH(x,Du^1)\Big)(m^2-m^1) 
        \\
        &\qquad  +\Big(D_pH(x,Du^2)-D_pH(x,Du^1)-D_{pp}H(x,Du^1)\cdot (Du^2-Du^1) \Big)m^1 
        \\
        &=\Big(D_pH(x,Du^2)-D_pH(x,Du^1)\Big)(m^2-m^1) \\ &\qquad +m^1\int_0^1 D(u^2-u^1)\cdot\Big( D_{pp}H(\lambda Du^2+(1-\lambda)Du^1)-D_{pp}H(x,Du^2)\Big)d\lambda, \\
    R^3(x,T) &=G(x,m^2(T))-G(x,m^1(T))-\frac{\delta G}{\delta m}(x,m^1(T))(m^2(T)-m^1(T)), \\
    \xi &=m_0^2-m_0^1.
\end{align*}
Note that we have the estimates
\begin{align*}
    \|\xi\|_{(C^{1+\delta})'} &\leq \sup\limits_{\|Df\|_{\infty}\leq 1}\int_{\R^d}f(x)(m_0^2-m_0^1) = \bd_1(m_0^2,m_0^1), \\
    \sup\limits_{t\in [t_0,T]}\|R^{2}(t)\|_{(W^{1,\infty})'} &\leq C\Big(\|Du^2-Du^1\|_{C^{0,2}}\sup\limits_{t\in [t_0,T]}\bd_1(m^2(t),m^1(t))+\|Du^2-Du^1\|_{C^{0,2}}^2\Big) \\
    &\leq C\Big(\|u^2-u^1\|_{C^{(2+\delta)/2,2+\delta}}^2+\sup\limits_{t\in [t_0,T]}\bd_1^2(m^2(t),m^1(t))\Big). \\
    \|R^3\|_{C^{2+\delta}} &\leq C\sup\limits_{t\in [t_0,T]}\bd_1^2(m^2(t),m^1(t)).
\end{align*}
It remains to estimate the quantity $\|R^1\|_{C^{\delta/2,+\delta}}$. For this, we  rewrite $R^1$ as  
\begin{align*}
    R^1(t,x) &= A(t,x) + B(t,x),
    \end{align*}
    where 
    \begin{align*}
    A(t,x) = -\int_0^1 \Big(D_pH(x,\lambda Du^2+(1-\lambda )Du^1)-D_pH(Du^1)\Big)\cdot (Du^2-Du^1)d\lambda, 
    \end{align*}
    and 
    \begin{align*}
    B(t,x)= \int_0^1 \Big(\frac{\delta F}{\delta m}(x,\lambda m^2(t)+(1-\lambda )m^1(t))-\frac{\delta F}{\delta m}(x,m^1(t))\Big)(m^2(t)-m^1(t)) d\lambda  \Big).
\end{align*}
\vs
Bounding $A$ is relatively straightforward, since
\[\|A\|_{C^{\delta/2,\delta}}\leq C\|D(u^2-u^1)\|_{C^{\delta/2,\delta}}^2\leq C\|u^2-u^1\|_{C^{\delta/2, 2+\delta}}^2.\]
\vs
The  bound $B$ is a bit more involved. Given $\lambda,\theta \in [0,1]$, let
\begin{align*}
    [m]_{\lambda}(t)=\lambda m^2(t)+(1-\lambda) m^1(t)), \quad [m]_{\lambda ,\theta}(t)=\theta [m]_{\lambda }(t)+(1-\theta)m^2(t), \quad  \rho(t)=m^2(t)-m^1(t).
\end{align*}
\vs
We rewrite $B$ as
\[B(t,x)= \int_0^1\int_0^1 \lambda \frac{\delta^2 F}{\delta m^2}\Big(x,[m]_{\lambda,\theta}(t) \Big)(\rho(t))(\rho(t))d\lambda d \theta,\]
and look at the functions 
\[C_{\lambda,\theta}(t,x) =\frac{\delta^2 F}{\delta m^2}\Big(x,[m]_{\lambda,\theta}(t) \Big)(\rho(t))(\rho(t)).\]
For every $\lambda,\theta \in [0,1]$, we have the estimates
\begin{align*}
    \|C_{\lambda,\theta}(\cdot,\cdot)\|_{\delta/2,\delta} &\leq C\bigg(\sup\limits_{t\neq s}\frac{\|[m]_{\lambda ,\theta}(t)-[m]_{\lambda ,\theta}(s)\|_{(C^{1})'}}{|t-s|^{\delta/2}}\sup\limits_{t\in [t_0,T]}\|\rho(t)\|_{(C^{2+\delta})'}^2\\
    & \quad +\sup\limits_{t\neq s}\frac{\|\rho(t)-\rho(s)\|_{(C^{2+\delta})'}}{|t-s|^{\delta/2}}\sup\limits_{t\in [t_0,T]}\|\rho(t)\|_{(C^1)'}\bigg)\\
    &\leq C\bigg( \Big(\sup\limits_{t\neq s}\frac{\bd_1(m^2(t),m^2(s))}{|t-s|^{\delta/2}}+\sup\limits_{t\neq s}\frac{\bd_1(m^1(t),m^1(s))}{|t-s|^{\delta/2}} \Big)\sup\limits_{t\in [t_0,T]}\bd_1^2(m^2(t),m^1(t))\\
    & \quad +\sup\limits_{t\in [t_0,T]}\bd_1(m^2(t),m^1(t))\frac{\|(m^2(t)-m^1(t))-(m^2(s)-m^1(s))\|_{(C^{2+\delta})'}}{|t-s|^{\delta/2}} \bigg)
\end{align*}
and thus 
\begin{align*}
    \|B(\cdot,\cdot)\|_{\delta/2,\delta}\leq& C\Big(\sup\limits_{t\in [t_0,T]}\bd_1(m^2(t),m^1(t))\frac{\|(m^2(t)-m^1(t))-(m^2(s)-m^1(s))\|_{(C^{2+\delta})'}}{|t-s|^{\delta/2}}\\
    & \qquad \qquad +\sup\limits_{t\in [t_0,T]}\bd_1^2(m^2(t),m^1(t))\Big).
\end{align*}
Combining the upper bounds  on $A$ and $B$, we finally conclude that
\[\|R^1\|_{C^{\delta/2,\delta}}\leq C\Big(\sup\limits_{t\in [t_0,T]}\bd_1(m^2(t),m^1(t))\frac{\|(m^2(t)-m^1(t))-(m^2(s)-m^1(s))\|_{(C^{2+\delta})'}}{|t-s|^{\delta/2}}\]
\[+\sup\limits_{t\in [t_0,T]}\bd_1^2(m^2(t),m^1(t))+\|u^2-u^1\|_{C^{\delta/2,2+\delta}}^2\Big),\]
and, hence, using  Lemma \ref{lem: MainEstimatesLinearizedSystem} we get 
\begin{align*}
    \|u^2-u^1&\|_{C^{(2+\delta)/2,2+\delta}}+\sup\limits_{t\in [t_0,T]}\|m^2(t)-m^1(t)\|_{(C^{2+\delta})'}+\sup\limits_{t\neq s}\frac{\|(m^2(t)-m^1(t))-(m^2(s)-m^1(s)\|_{(C^{2+\delta})'}}{|t-s|^{\delta/2}} \\
    &\leq C\bigg(\|u^2-u^1\|_{C^{\delta/2,2+\delta}}^2+\sup\limits_{t\in [t_0,T]}\bd_1^2(m^2(t),m^1(t)) \\
    &\qquad +\sup\limits_{t\in [t_0,T]}\bd_1(m^2(t),m^1(t))\frac{\|(m^2(t)-m^1(t))-(m^2(s)-m^1(s))\|_{(C^{2+\delta})'}}{|t-s|^{\delta/2}}\bigg).
\end{align*}
Thus choosing $\eta>0$ small enough in \eqref{eq: ProximityWIthEta}, we find

\[
\begin{split}
\|u^2-&u^1\|_{C^{(2+\delta)/2,2+\delta}}+\frac{\|(m^2(t)-m^1(t))-(m^2(s)-m^1(s))\|_{(C^{2+\delta})'}}{|t-s|^{\delta/2}}\\
&\leq C\Big(\bd_1(m_0^2,m_0^1)+\sup\limits_{t\in [t_0,T]}\bd_1^2(m^2(t),m^1(t))\Big)
\end{split}\]
\[\leq C\Big(\bd_1(m_0^2,m_0^1)+\|u^2-u^1\|_{C^{(2+\delta)/2,2+\delta}}^2 +\bd_1^2(m_0^2,m_0^1)\Big),\]

where in the last inequality we used \eqref{eq: D1Sup}. 
\vs
Therefore, choosing $\eta>0$ even smaller if necessary,  we obtain 
\[\|u^2-u^1\|_{C^{(2+\delta)/2,k+\delta}}+\frac{\|(m^2(t)-m^1(t))-(m^2(s)-m^1(s))\|_{(C^{2+\delta})'}}{|t-s|^{\delta/2}}\leq C\bd_1(m_0^2,m_0^1).\]
Using the last inequality in \eqref{eq: D1Sup} yields
\[\sup\limits_{t\in [t_0,T]}\bd_1(m^2(t),m^1(t))\leq C\bd_1(m_0^2,m_0^1).\]

\end{proof}

Next, we give a sharpened version of the main regularity result in \cite{CardSoug2022}.
\begin{prop} \label{prop.dmlip}
    Let Assumption \ref{assump:values} hold. Then, for each $(t_0,m_0) \in \cO$, there exist constants $\delta, C > 0$ such that, for  $t$, $m_1,m_2$ with $|t - t_0| < \delta$, $\dtwo(m_0,m_i) < \delta$ and $i = 1,2$, we have 
    \begin{align*}
        \sup_{x \in \R^d} |D_m \cU(t,m_1,x) - D_m \cU(t,m_2,x)| \leq C \bd_1(m_1,m_2).
    \end{align*}
\end{prop}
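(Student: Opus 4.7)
The plan is to reduce the claim to the standard identification of the Wasserstein derivative $D_m\cU$ with the spatial gradient of the multiplier $u$ of the optimal trajectory, and then apply Lemma~\ref{lem: MainEstimatesFOrDifferenceOfSOlutions}. More precisely, on $\mathcal{O}$ the value function $\cU$ is $C^1$ by the main results of \cite{CardSoug2022}, and as recalled there, for any $(t,m)\in\mathcal{O}$ one has
\begin{equation*}
    D_m \cU(t,m,x) \;=\; Du^{(t,m)}(t,x), \quad x\in\R^d,
\end{equation*}
where $u^{(t,m)}$ is the unique multiplier associated to the unique stable optimal trajectory $s\mapsto m_s^{(t,m)}$ starting from $(t,m)$ (i.e. the solution of the corresponding \eqref{system: MFGSystem} on $[t,T]$ with initial measure $m$).

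Fix $(t_0,m_0)\in\mathcal{O}$. Since $\mathcal{O}$ is open in $[0,T]\times\cP_2(\R^d)$, we can choose $\delta_0>0$ such that $(t,m)\in\mathcal{O}$ whenever $|t-t_0|<\delta_0$ and $\bd_2(m,m_0)<\delta_0$. Next, we apply Lemma~\ref{lem: MainEstimatesFOrDifferenceOfSOlutions} at the reference point $(t_0,m_0)$: there exist $\theta,C>0$ such that, whenever $|t-t_0|<\theta$ and $\bd_2(m_0,m_i)<\theta$ for $i=1,2$, taking $t_0'=t$, $m_0^1=m_1$, $m_0^2=m_2$ and denoting by $u^i$ the multiplier associated to the unique minimizer starting from $(t,m_i)$, one has
\begin{equation*}
    \|u^2-u^1\|_{C^{(2+\delta)/2,\,2+\delta}} \;\leq\; C\,\bd_1(m_1,m_2).
\end{equation*}
Set $\delta=\min(\delta_0,\theta)$.

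Now for any $x\in\R^d$ and any $t,m_1,m_2$ as in the statement, the identification above gives $D_m\cU(t,m_i,x)=Du^i(t,x)$, so that
\begin{equation*}
    |D_m\cU(t,m_1,x)-D_m\cU(t,m_2,x)| \;=\; |Du^1(t,x)-Du^2(t,x)| \;\leq\; \|u^2-u^1\|_{C^{(2+\delta)/2,\,2+\delta}} \;\leq\; C\,\bd_1(m_1,m_2),
\end{equation*}
which is the claimed estimate, uniformly in $x$.

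The only nontrivial ingredient is Lemma~\ref{lem: MainEstimatesFOrDifferenceOfSOlutions}, whose key improvement over the analogous result in \cite{CardSoug2022} is precisely that the right-hand side is the $\bd_1$-distance (rather than $\bd_2$) between the initial measures; this is exactly what produces a $\bd_1$-Lipschitz bound on $D_m\cU(t,m,\cdot)$. The remaining two points to check are (i) that the identity $D_m\cU(t,m,x)=Du^{(t,m)}(t,x)$ holds at every point of $\mathcal{O}$, which is established in \cite{CardSoug2022} as part of the proof that $\cU\in C^1(\mathcal{O})$ and is a classical solution of \eqref{eq:hjbinf} there, and (ii) that Lemma~\ref{lem: MainEstimatesFOrDifferenceOfSOlutions} can be invoked with initial time $t_0'=t$ varying in a neighborhood of $t_0$, which is built into its hypotheses. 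No additional argument is required beyond combining these facts.
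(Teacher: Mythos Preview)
Your proof is correct and follows essentially the same approach as the paper's: identify $D_m\cU(t,m_i,x)=Du^i(t,x)$ via the multiplier of the unique stable optimal trajectory, and then apply Lemma~\ref{lem: MainEstimatesFOrDifferenceOfSOlutions} to bound $\|u^2-u^1\|_{C^{(2+\delta)/2,2+\delta}}$ by $C\bd_1(m_1,m_2)$. The paper's argument is just a terser version of what you wrote.
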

\begin{proof}
For $i=1,2$, let $(u^i,m^i)$ be solutions to \ref{system: MFGSystem} with initial conditions $m^i(t)=m_i$. We have that
    \[D_m \cU(t,m^i,x)= Du^i(t,x)\]
    and the result follows from Lemma \ref{lem: MainEstimatesFOrDifferenceOfSOlutions}.

\end{proof}

\subsection{The $\cC^2-$regularity of $\cU$}
In this subsection we show  that the function $\cU$ is twice differentiable in $m$ and, moreover, the map $m\rightarrow D_{mm}\cU(t,m,x,y)$ is Lipschitz in $d_1$ locally within $\cO$. This is the assertion of Theorem~\ref{thm.c2}. The proof  follows closely the one developed \cite{CardaliaguetDelarueLasryLions}. 
\vs
We  introduce next some notation, and also give a roadmap showing how the arguments of \cite{CardaliaguetDelarueLasryLions} will be adapted to the present setting.
\vs
For $(t_0,m_0) \in \cO$, let $m^{(t_0,m_0)}$ denote the unique optimal trajectory started from $(t_0,m_0)$ and by $u^{(t_0,m_0)}$ its  corresponding multiplier, and consider the map $\Phi : \cO \times \R^d \to \R$ given  by 
\[\Phi(t_0,m_0,x)=u^{(t_0,m_0)}(t_0,x).\]
It was shown in the proof of Lemma 2.9 in \cite{CardSoug2022} that, for $(t,m) \in \cO$, we have
\begin{align} \label{eq: FormulaForDelta1U}
    \frac{\delta \cU}{\delta m}(t,m,x) = \Phi(t,m,x) - \int_{\R^d} \Phi(t,m,z) m(dz).
\end{align}
Given a multi-index $l\in \{0,1\}^d$ with $|l|=\sum\limits_{i=1}^dl_i\leq 1$, that is,  either $l = (0,\cdots,0)$ or $l =e_i$ for some $i\in \{1,\cdots,d\}$, where $e_i$ is the standard basis in $\R^d$, and $y\in \R^d$, let $(w^{(l),y},\rho^{(l),y})$ be the solution to \eqref{system: LinearizedSystemNoError} driven by $(u,m)$, with initial condition $\rho^{(l),y}(t_0,\cdot)=(-1)^{|l|}D^{(l)} \delta_y$. 
\vs
We also define the function $K^{(l)} : \cO \times \R^d \times \R^d \to \R$ given by
\[K^{(l)}(t_0,m_0,x,y)=w^{(l),y}(t_0,x),\]
and, for simplicity of notation, we write $K = K^{(0)}$. 
\vs
Following the arguments in \cite{CardaliaguetDelarueLasryLions}, we show  below that 
\begin{align}\label{eq: LinearDerivativeOfPhi}
    \frac{\delta \Phi}{\delta m}(t,m,x,y) = \frac{\delta}{\delta m} \big[\Phi(t,\cdot, x) \big](y) = K(t_0,m_0,x,y).
\end{align}
We note that the normalization convention \[\intd K(t_0,m_0,x,y)m_0(dy) =0\]
is satisfied, since $(z,\rho) = (0,m^{(t_0,m_0)})$ is the unique solution to \eqref{system: LinearizedSystemNoError}.
\vs
Combining  \eqref{eq: LinearDerivativeOfPhi} and  \eqref{eq: FormulaForDelta1U} and keeping in mind the normalization convention for linear derivatives, it follows that $\frac{\delta^2 \cU}{\delta m^2} = \cK$, where $\cK$ is the ``normalized" version of $K$ given by 
\begin{equation}\label{eq: DefinitionOfNormalizedK}
\begin{split}
\cK(t_0,m_0,x,y)&= K(t_0,m_0,x,y)-\intd K(t_0,m_0,z,y)m_0(dz)-u^{(t_0,m_0)}(t_0,y)\\
&+\intd u^{(t_0,m_0)}(t_0,z)m_0(dz).
\end{split}
\end{equation}

The existence and regularity of $D_{mm} \cU$ thus follows from the regularity properties of the map $K$, which we  investigate next.
\begin{prop} \label{prop.kreg}
Let Assumption \ref{assump:gradients} hold and fix $(t_0,m_0)\in \mathcal{O}$. Then, %Under the same notation as above, we have that 
the function $K^{(0)}(t_0,m_0,x,y)$ is differentiable in $y$. Moreover, for any $l\in \{0,1\}^d$ with $|l|\leq 1$, the derivative $x\rightarrow D_y^{(l)}K^{(0)}(t_0,m_0,x,y)$ belongs to $C^{2+\delta}(\R^d)$ and is given by
\begin{equation}\label{eq: DifferentiabilityOfSecondDerivative}
    D_y^{(l)}K^{(0)}(t_0,m_0,x,y)= K^{(l)}(t_0,m_0,x,y).
\end{equation}
Furthermore, there exists a constant $C>0$, which depends on the data, such that
\begin{equation}\label{eq: GrowthOfSecondDerivative}
    \sup\limits_{y\in \R^d}\| D_y^{(l)}K^{(0)}(t_0,m_0,\cdot,y)\|_{C^{2+\delta}}\leq C,
\end{equation}
and
\begin{equation}\label{eq: HolderRegularityOfSecondDerivative}
    \|D_y^{(l)}K^{(0)}(t_0,m_0,\cdot,y')-D_y^{(l)}K^{(0)}(t_0,m_0,\cdot,y)\|_{2+\delta}\leq C|y'-y|^{\delta}.
\end{equation}
Finally, given a finite signed measure $\xi$ on $\R^d$, the unique solution $(z,\rho)$ to \eqref{system: LinearizedSystemNoError} driven by $(u,m)$ and with initial condition $\rho(t_0)=\xi$ satisfies 
\begin{equation}\label{eq: RepresentationSecondDerivative}
    z(t_0,x)=\langle K^{(0)}(t_0,m_0,x,\cdot),\xi\rangle .
\end{equation}
\end{prop}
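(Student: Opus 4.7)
The plan is to adapt to the present setting the strategy developed in \cite{CardaliaguetDelarueLasryLions} for studying the derivatives of the master equation, with Lemma \ref{lem: MainEstimatesLinearizedSystem} serving as the main workhorse. The crucial observation is that, for any multi-index $l \in \{0,1\}^d$ with $|l|\leq 1$ and any $y \in \R^d$, the initial datum $\xi = (-1)^{|l|} D^{(l)}\delta_y$ belongs to $(C^{1+\delta})'$ with norm bounded by $1$ uniformly in $y$: for $|l|=0$ this is immediate, while for $l = e_i$ one has $|\langle \partial_i \delta_y, f\rangle| = |\partial_i f(y)| \leq \|f\|_{C^{1+\delta}}$. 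Applying Lemma \ref{lem: MainEstimatesLinearizedSystem} directly to the solution $(w^{(l),y}, \rho^{(l),y})$ of \eqref{system: LinearizedSystemNoError} then yields \eqref{eq: GrowthOfSecondDerivative}. For the H\"older bound \eqref{eq: HolderRegularityOfSecondDerivative}, I would use linearity: the difference $K^{(l)}(t_0,m_0,\cdot, y') - K^{(l)}(t_0,m_0,\cdot,y)$ is the first component of the solution of \eqref{system: LinearizedSystemNoError} driven by $(-1)^{|l|}(D^{(l)}\delta_{y'}-D^{(l)}\delta_y)$, whose $(C^{1+\delta})'$-norm is bounded by $|y'-y|^\delta$ for $|l|=1$ (and even by $|y'-y|$ for $|l|=0$).

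The identity \eqref{eq: DifferentiabilityOfSecondDerivative} will then be obtained via a difference-quotient argument. By linearity of \eqref{system: LinearizedSystemNoError}, the quotient $h^{-1}(w^{(0),y+he_i}-w^{(0),y})(t_0,x)$ is the first component of the solution driven by $\xi_h = h^{-1}(\delta_{y+he_i}-\delta_y)$. For any $f$ with $\|f\|_{C^{1+\delta}} \leq 1$, Taylor expansion gives $|h^{-1}(f(y+he_i) - f(y)) - \partial_i f(y)| \leq C|h|^\delta$, so $\xi_h \to -\partial_i \delta_y = \rho^{(e_i),y}(t_0)$ in $(C^{1+\delta})'$ at rate $|h|^\delta$ (note the sign: the distributional derivative satisfies $\langle \partial_i \delta_y, f\rangle = -\partial_i f(y)$). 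A second application of Lemma \ref{lem: MainEstimatesLinearizedSystem} to the difference of the two systems driven by $\xi_h$ and by $\rho^{(e_i),y}(t_0)$ then yields convergence in $C^{2+\delta}$ of the difference quotient to $w^{(e_i),y}(t_0,\cdot) = K^{(e_i)}(t_0,m_0,\cdot,y)$, proving \eqref{eq: DifferentiabilityOfSecondDerivative}.

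Finally, for the representation formula \eqref{eq: RepresentationSecondDerivative}, I would argue by linearity and density. The map $\xi \mapsto z(t_0,\cdot)$ from $(C^{1+\delta})'$ into $C^{2+\delta}$ is linear and continuous by Lemma \ref{lem: MainEstimatesLinearizedSystem}, while the map $\xi \mapsto \langle K^{(0)}(t_0,m_0,x,\cdot),\xi\rangle$ is well-defined for any finite signed measure (since $y \mapsto K^{(0)}(t_0,m_0,x,y)$ lies in $C^{1+\delta}$ by the previous steps) and linear and continuous in total variation. By construction, the two maps coincide on every Dirac mass $\delta_y$, hence on every finite linear combination; an approximation argument extends the identity to arbitrary finite signed measures. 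The main obstacle throughout is careful bookkeeping: keeping track of the sign $(-1)^{|l|}$ hidden in the definition of $\rho^{(l),y}$ and verifying that the relevant Dirac-type distributions and their differences indeed have the correct $(C^{1+\delta})'$ rates to feed into Lemma \ref{lem: MainEstimatesLinearizedSystem}. Once these points are settled, the proof is essentially a direct transcription of \cite{CardaliaguetDelarueLasryLions}.
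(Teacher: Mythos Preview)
Your proposal is correct and follows essentially the same route as the paper: repeated application of Lemma \ref{lem: MainEstimatesLinearizedSystem} to solutions of \eqref{system: LinearizedSystemNoError} with Dirac-type initial data, exploiting that $(-1)^{|l|}D^{(l)}\delta_y$ and the relevant differences/difference quotients have the required $(C^{1+\delta})'$ bounds. The only point of departure is the representation formula \eqref{eq: RepresentationSecondDerivative}: the paper argues directly that the pair $\big(\int_{\R^d} w^{(0),y}\,d\xi(y),\, \int_{\R^d}\rho^{(0),y}\,d\xi(y)\big)$ solves \eqref{system: LinearizedSystemNoError} with initial datum $\xi$ and invokes uniqueness, whereas you propose a density argument. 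Your route works too, but be careful with the topology: finite combinations of Diracs are \emph{not} dense in finite signed measures for the total-variation norm, so you must run the approximation in $(C^{1+\delta})'$ (where such density does hold, via discretization on a fine grid and truncation) and use that both maps are continuous there. The paper's direct verification avoids this bookkeeping.
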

\begin{proof}
    Fix $y\in \R^d$. Since $|l|\leq 1$, Lemma \ref{lem: MainEstimatesLinearizedSystem} yields 
    \begin{align*}
        &\|w^{(l),y}\|_{C^{(2+\delta)/2,2+\delta}}+\sup\limits_{t\in [t_0,T]}\|\rho^{(l),y}(t)\|_{(C^{2+\delta})'}+\sup\limits_{t'\neq t}\frac{\|\rho^{(l),y}(t)-\rho^{(l),y}(s)\|_{(C^{2+\delta})'}}{|t-s|^{\delta/2}} \\
        &\qquad \leq C\|D^{(l)}\delta_y\|_{(C^{1+\delta})'}\leq C.
    \end{align*}
   Note that in the estimate  above we used that 
    \begin{align*}
        \|D^{(l)}\delta_{y}\|_{(C^{1+\delta})'}=\sup\limits_{\|f\|_{C^{1+\delta}}\leq 1}D_y^{(l)}f(y)\leq \sup\limits_{\|f\|_{C^{1+\delta}}\leq 1}\|f\|_{C^{1+\delta}}\leq 1.
    \end{align*}
   Let  $e_1, \ldots, e_d$ be  the standard basis vectors in $\R^d$. Then, for $\epsilon>0$ and $i\in \{1,\cdots,d\}$,  the pair
    \[(z_i^{\epsilon},\lambda_i^{\epsilon})=\Bigg(\frac{1}{\epsilon}\Big(w^{(0),y+\epsilon e_i}-w^{(0),y}\Big) -w^{(e_i),y},\frac{1}{\epsilon}\Big(\rho^{(0),y+\epsilon e_i}-\rho^{(0),y}\Big) -\rho^{(e_i),y}\Bigg)\]
    is the unique solution to \eqref{system: LinearizedSystemNoError} driven by $(u,m)$ with initial condition
    \[\lambda_i^{\epsilon}=\frac{1}{\epsilon}\Big(\delta_{y+\epsilon e_i}-\delta_y\Big)-(-1)D^{(e_i)} \delta_y.\] 
%    Therefore, by Lemma \ref{lem: MainEstimatesLinearizedSystem}, we have that
%    \begin{align*}
%        &\|\frac{1}{\epsilon}\Big(w^{(0),y+\epsilon e_i}-w^{(0),y}\Big) -w^{(e_i),y}\|_{C^{(2+\delta)/2,2+\delta}} \\
%        &\qquad \leq C\|\frac{1}{\epsilon}\Big(\delta_{y+\epsilon e_i}-\delta_y\Big)-(-1)D^{(e_i)} \delta_y\|_{(C^{1+\delta})'}\leq C\epsilon^{\delta},
%    \end{align*}
  Since  
    \begin{align*}
        &\|\frac{1}{\epsilon}\Big(\delta_{y+\epsilon e_i}-\delta_y\Big)-(-1)D^{(e_i)} \delta_y\|_{ (C^{1+\delta})'}\\
        &\quad =\sup\limits_{\|f\|_{C^{1+\delta}}\leq 1} \frac{f(y+\epsilon e_i)-f(y)-\epsilon D^{(e_i)}f(y)}{\epsilon}\\
        &\quad =\sup\limits_{\|f\|_{C^{1+\delta}}\leq 1} \int_0^1 D^{(e_i)}\Big(f(s(y+\epsilon e_i)+(1-s)y)-f(y)\Big) ds\\
        &\quad \leq C\sup\limits_{\|f\|_{C^{1+\delta}}\leq 1} \|D^{e_i}f\|_{C^{\delta}}\int_0^1 s^{\delta}\epsilon^{\delta}ds\leq C\sup\limits_{\|f\|_{C^{1+\delta}}\leq 1}\|f\|_{C^{1+\delta}}\epsilon^{\delta}\leq C\epsilon^{\delta }, 
        \end{align*}
 Lemma \ref{lem: MainEstimatesLinearizedSystem} yields 
    \begin{align*}
        &\|\frac{1}{\epsilon}\Big(w^{(0),y+\epsilon e_i}-w^{(0),y}\Big) -w^{(e_i),y}\|_{C^{(2+\delta)/2,2+\delta}} \\
        &\qquad \leq C\|\frac{1}{\epsilon}\Big(\delta_{y+\epsilon e_i}-\delta_y\Big)-(-1)D^{(e_i)} \delta_y\|_{(C^{1+\delta})'}\leq C\epsilon^{\delta},
    \end{align*}
and, hence,    \eqref{eq: GrowthOfSecondDerivative} and \eqref{eq: DifferentiabilityOfSecondDerivative} follow. 
\vs 

Furthermore, given $y',y$,  \eqref{eq: HolderRegularityOfSecondDerivative} is an application of Lemma \ref{lem: MainEstimatesLinearizedSystem} to the pair $(w^{(l),y'}-w^{{(l),y}},\rho^{(l),y'}-\rho^{(l),y})$. 
\vs
    Finally,  \eqref{eq: RepresentationSecondDerivative} follows from the fact  that the pair 
    \[(z^{\xi},\mu^{\xi})=( \intd w^{(0),y}(t,x)d\xi(y), \intd \rho^{(0),y} d\xi(y))\]
    is the unique solution to \eqref{system: LinearizedSystemNoError} driven by $(u,m)$ with initial condition $\rho^{\xi}(t_0)=\xi$. 

\end{proof}
We now show the Lipschitz continuity of $K^{(l)}$ with respect to $m$.
\begin{prop}\label{prop: C^2,1}
Let Assumption \ref{assump:gradients} hold. Given $(t_0,m_0^1)\in \mathcal{O}$, there exist $\eta >0$ and $C>0$ such that, if $m_0^2\in \mathcal{P}_2(\R^d)$ and $\bd_2(m_0^2,m_0^1)\leq \eta$, then
\[\|K^{(l)}(t_0,m_0^2,\cdot,y)-K^{(l)}(t_0,m_0^1,\cdot,y)\|_{2+\delta}\leq C\bd_1(m_0^2,m_0^1).\]
\end{prop}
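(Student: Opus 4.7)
The plan is to imitate the strategy used in Lemma \ref{lem: MainEstimatesFOrDifferenceOfSOlutions}. Let $(u^i,m^i)$ denote the MFG pair starting from $(t_0,m_0^i)$ and let $(w^i,\rho^i)$ be the solution of \eqref{system: LinearizedSystemNoError} driven by $(u^i,m^i)$ with initial condition $\rho^i(t_0,\cdot)=(-1)^{|l|}D^{(l)}\delta_y$, so that $K^{(l)}(t_0,m_0^i,x,y)=w^i(t_0,x)$. I then want to estimate $\|w^2(t_0,\cdot)-w^1(t_0,\cdot)\|_{2+\delta}$. Setting $(W,P):=(w^2-w^1,\rho^2-\rho^1)$, a direct computation shows that $(W,P)$ solves \eqref{system: LinearizedSystemWithError} driven by $(u^1,m^1)$ with vanishing initial datum $P(t_0)=0$ and forcing terms
\begin{align*}
R^1 &= -\bigl(D_pH(\cdot,Du^2)-D_pH(\cdot,Du^1)\bigr)\cdot Dw^2+\Bigl(\tfrac{\delta F}{\delta m}(\cdot,m^2)-\tfrac{\delta F}{\delta m}(\cdot,m^1)\Bigr)(\rho^2),\\
R^2 &= \rho^2\bigl(D_pH(\cdot,Du^2)-D_pH(\cdot,Du^1)\bigr)+(m^2-m^1)D_{pp}H(\cdot,Du^1)Dw^2\\
&\quad +m^2\bigl(D_{pp}H(\cdot,Du^2)-D_{pp}H(\cdot,Du^1)\bigr)Dw^2,\\
R^3 &= \Bigl(\tfrac{\delta G}{\delta m}(\cdot,m^2(T))-\tfrac{\delta G}{\delta m}(\cdot,m^1(T))\Bigr)(\rho^2(T)).
\end{align*}

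For $\eta$ small enough, $(u^1,m^1)$ stays in the stability window around the pair $(u,m)$ emanating from $(t_0,m_0^1)\in\mathcal{O}$, so Lemma \ref{lem: MainEstimatesLinearizedSystem} applied to $(W,P)$ gives
\[
\|W\|_{C^{(2+\delta)/2,2+\delta}}\leq C\Bigl(\|R^1\|_{C^{\delta/2,\delta}}+\sup_{t}\|R^2(t)\|_{(W^{1,\infty})'}+\|R^3\|_{C^{2+\delta}}\Bigr),
\]
which it remains to bound by $C\bd_1(m_0^2,m_0^1)$. From Lemma \ref{lem: MainEstimatesFOrDifferenceOfSOlutions} I have $\|u^2-u^1\|_{C^{(2+\delta)/2,2+\delta}}+\sup_t\bd_1(m^2(t),m^1(t))\leq C\bd_1(m_0^2,m_0^1)$, and from Lemma \ref{lem: MainEstimatesLinearizedSystem} and Proposition \ref{prop.kreg} the individual solutions satisfy $\|w^i\|_{C^{(2+\delta)/2,2+\delta}}+\sup_t\|\rho^i(t)\|_{(C^{2+\delta})'}\leq C$ uniformly in $y$.

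The terms in $R^1$ and $R^3$ are handled by the same Taylor expansion trick used in the proof of Lemma \ref{lem: MainEstimatesFOrDifferenceOfSOlutions}: for the differences in $D_pH$ one uses $D_{pp}H$ to produce a factor $Du^2-Du^1$, while for the differences in $\frac{\delta F}{\delta m}$ and $\frac{\delta G}{\delta m}$ one writes the increment as an integral against $m^2-m^1$ using $\frac{\delta^2 F}{\delta m^2}$ and $\frac{\delta^2 G}{\delta m^2}$ (which exist under Assumption \ref{assump:gradients}) and pairs the inner integral against the uniformly $(C^{2+\delta})'$-bounded $\rho^2$. The second and third summands of $R^2$ are handled as in Lemma \ref{lem: MainEstimatesFOrDifferenceOfSOlutions}, since $m^2-m^1$ is a signed measure whose $(W^{1,\infty})'$ norm coincides with $\bd_1(m^2,m^1)$ and $m^2$ is a probability measure of finite total variation.

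The main obstacle is the first summand $\rho^2(D_pH(\cdot,Du^2)-D_pH(\cdot,Du^1))$, because $\rho^2$ is a distribution of order $|l|\leq 1$ rather than a bounded measure, and the $(C^{2+\delta})'$ bound furnished by Lemma \ref{lem: MainEstimatesLinearizedSystem} is insufficient: pairing against $\phi\in W^{1,\infty}$ produces the test function $\phi\,(D_pH(\cdot,Du^2)-D_pH(\cdot,Du^1))$, which lies only in $W^{1,\infty}$ and not in $C^{1+\delta}$. The needed improvement is a uniform bound $\sup_t\|\rho^2(t)\|_{(W^{1,\infty})'}\leq C$, which I propose to obtain by duality: for $\phi\in W^{1,\infty}$ with $\|\phi\|_{W^{1,\infty}}\leq 1$ and $v$ solving the backward equation $-\pt v-\Delta v+D_pH(\cdot,Du^2)\cdot Dv=0$ on $[t_0,t]$ with $v(t)=\phi$, standard parabolic estimates yield $\|v\|_{L^\infty_s W^{1,\infty}_x}\leq C$, and the representation
\[
\langle \rho^2(t),\phi\rangle=\langle \rho^2(t_0),v(t_0)\rangle-\int_{t_0}^t\langle m^2 D_{pp}H(\cdot,Du^2)Dw^2,Dv\rangle\,ds
\]
together with $\|\rho^2(t_0)\|_{(W^{1,\infty})'}\leq 1$ (valid in both cases $l=0$ and $|l|=1$) gives the desired bound. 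Once this is established, the first summand of $R^2$ is of size $C\bd_1(m_0^2,m_0^1)$ in $(W^{1,\infty})'$, which closes the estimate and yields $\|W\|_{C^{(2+\delta)/2,2+\delta}}\leq C\bd_1(m_0^2,m_0^1)$, hence the proposition.
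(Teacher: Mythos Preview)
Your approach is essentially identical to the paper's: form the difference $(W,P)=(w^2-w^1,\rho^2-\rho^1)$, check that it solves \eqref{system: LinearizedSystemWithError} driven by $(u^1,m^1)$ with zero initial datum and exactly the forcing terms you wrote down, estimate those terms using Lemma~\ref{lem: MainEstimatesFOrDifferenceOfSOlutions}, and close with Lemma~\ref{lem: MainEstimatesLinearizedSystem}. The paper's $R^1,R^2,R^3$ match yours (their occurrences of $Du^2$ and $m^2(T)$ in $R^1,R^2,R^3$ are evident typos for $Dz^2$ and $\rho^2(T)$).

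Your extra paragraph on the first summand of $R^2$ is a genuine improvement over the paper's argument. The paper simply asserts
\[
\sup_{t}\|R^2(t)\|_{(W^{1,\infty})'}\le C\bigl(\sup_t\|Du^2-Du^1\|_{C^1}+\sup_t\bd_1(m^2(t),m^1(t))\bigr),
\]
which tacitly requires a uniform $(W^{1,\infty})'$ bound on $\rho^2(t)$; the $(C^{2+\delta})'$ estimate of Lemma~\ref{lem: MainEstimatesLinearizedSystem} alone does not give this, since for $\phi\in W^{1,\infty}$ the test function $\phi\,(D_pH(\cdot,Du^2)-D_pH(\cdot,Du^1))$ is only Lipschitz. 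Your duality fix---solve the backward transport--diffusion equation with Lipschitz terminal datum, use the standard Lipschitz propagation estimate (available since $D_pH(\cdot,Du^2)$ is $C^1$ in $x$), and read off the representation for $\langle\rho^2(t),\phi\rangle$---is a correct and standard way to supply $\sup_t\|\rho^2(t)\|_{(W^{1,\infty})'}\le C$, and makes the estimate rigorous.
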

\begin{proof}
    Let $(u^1,m^1)$ and $ (u^2,m^2)$ be the unique stable solutions to \eqref{system: MFGSystem} with initial conditions $m^1(t_0)=m_0^1$ and $m^2(t_0)=m_0^2$ respectively. In addition, let $(z^1,\rho^1), (z^2,\rho^2)$ be solutions to \eqref{system: LinearizedSystemNoError} driven by $(u^1,m^1),(u^2,m^2)$ respectively and with initial conditions $\rho^1(t_0)=\rho^2(t_0)= (-1)^{|l|}D^{(l)}\delta_y$. 
 \vs   
    The pair 
    \[(w,\lambda) =(z^2-z^1,\rho^2-\rho^1)\]
    solves \eqref{system: LinearizedSystemWithError} driven by $(u^1,m^1)$ with 
    \begin{align*}
        &R^1 =\Big(D_pH(x,Du^1)-D_pH(x,Du^2) \Big)\cdot Du^2+\Big(\frac{\delta F}{\delta m}(x,m^2(t))(\rho^2(t))-\frac{\delta F}{\delta m}(x,m^1(t))(\rho^2(t)) \Big), \\
        &R^2 =\rho^2(t)\Big(D_pH(x,Du^2)-D_pH(x,Du^1)\Big)+\Big(m^2D_{pp}H(x,Du^2)-m^1D_{pp}H(x,Du^1)\Big)\cdot Du^2, \\
        &R^3 =\frac{\delta G}{\delta m}(x,m^2(T))(m^2(T))-\frac{\delta G}{\delta m}(x,m^1(T))(m^2(T)), \\
        &\xi =0.
    \end{align*}
From Lemma \ref{lem: MainEstimatesFOrDifferenceOfSOlutions} we have 

\begin{align*}
    \|R^3\|_{C^{2+\delta}} \leq C\sup\limits_{t\in [t_0,T]}\bd_1(m^2(t),m^1(t))\leq C\bd_1(m_0^2,m_0^1), 
    \end{align*}
    and
    \begin{align*}
    &\sup\limits_{t\in [t_0,T]}\|R^2(t)\|_{(W^{1,\infty})'} \leq C\Big(\sup\limits_{t\in [t_0,T]}\|Du^2(t)-Du^1(t)\|_{C^{1}}+\sup\limits_{t\in [t_0,T]}\bd_1(m^2(t),m^1(t))\Big)\\[1mm]
    & \hskip1.5in \leq C\bd_1(m_0^2,m_0^1).
\end{align*}
\vs
It remains to estimate $\|R^1\|_{C^{\delta/2,\delta}}$. To this end,  we rewrite it as
\begin{align*}
R^1 &=A+B,
\end{align*}
with 
\[
    A=\Big(D_pH(x,Du^1)-D_pH(x,Du^2) \Big)\cdot Du^2,\]
    and 
   \[ B=\frac{\delta F}{\delta m}(x,m^2(t))(\rho^2(t))-\frac{\delta F}{\delta m}(x,m^1(t))(\rho^2(t)).
   \]
%\end{align*}
It follows from Lemma \ref{lem: MainEstimatesFOrDifferenceOfSOlutions} that
\[\|A\|_{C^{\delta/2,\delta}}\leq C\|D(u^2-u^1)\|_{C^{\delta/2,\delta}}\leq C\|u^2-u^1\|_{C^{(2+\delta),2+\delta}}\leq C\bd_1(m_0^2,m_0^1).\]
Finally, we write
\[B(t,x)=\int_0^1 \frac{\delta^2 F}{\delta m^2}(x,\lambda m^2(t)+(1-\lambda)m^1(t))(\rho^2(t))(m^2(t)-m^1(t))d\lambda. \]
An argument similar to the one in  the proof of Lemma \ref{lem: MainEstimatesFOrDifferenceOfSOlutions} yields 
 \[\|B(\cdot,\cdot)\|_{C^{\delta/2,\delta}}\leq C\Big(\sup\limits_{t\neq s}\frac{\|(m^2(t)-m^1(t)-(m^2(s)-m^1(s)))\|_{(C^{2+\delta})'}}{|t-s|^{\delta/2}}+\sup\limits_{t\in [t_0,T]}\bd_1(m^2(t),m^1(t))\Big),\]
and, hence, by Lemma \ref{lem: MainEstimatesFOrDifferenceOfSOlutions},  we have 
\[\|R^1\|_{C^{\delta/2,\delta}}\leq C\bd_1(m_0^2,m_0^1).\]
Finally,  Lemma \ref{lem: MainEstimatesLinearizedSystem} and the definition of $K^{(l)}$ imply that 
\[\|K^{(l)}(t_0,m_0^2,\cdot,y)-K^{(l)}(t_0,m_0,\cdot,y)\|_{2+\delta}\leq \|z^2-z\|_{C^{(2+\delta)/2,2+\delta}}\leq C\bd_1(m_0^2,m_0^1).\]
\end{proof}
\begin{thm}\label{thm: C^2Theorem}
Let Assumption \ref{assump:gradients} hold. The map $\mathcal{U}$ is $\cC^2$ in the set $\mathcal{O}$ and satisfies
\[\frac{\delta^2 \cU}{\delta^2 m}(t_0,m_0,x,y) = \cK(t_0,m_0,x,y).\]
Moreover, given $(t_0,m_0^1)\in \mathcal{O}$, there exist an $\eta>0$ and $C>0$ such that, if $m_0^2\in \mathcal{P}_2(\R^d)$ with $d_2(m_0^2,m_0^1)\leq \eta$, then
\[\Big\|\frac{\delta \cU}{\delta m}(t_0,m_0^2,\cdot)- \frac{\delta \cU}{\delta m}(t_0,m_0^1,\cdot)-\intd \cK(t_0,m_0,\cdot,y)d(m_0^2-m_0^1)(y)\Big\|_{2+\delta}\leq C\bd_1^2(m_0^2,m_0^1).\]
\end{thm}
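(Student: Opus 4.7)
The plan is to follow the strategy used in \cite{CardaliaguetDelarueLasryLions} for the master equation of mean field games: first establish that $\delta\Phi/\delta m = K$ with a quadratic-in-$\bd_1$ remainder, then deduce the second-order linear derivative of $\cU$ from \eqref{eq: FormulaForDelta1U}, and finally obtain the quantitative bound by integrating along a line segment and exploiting the Lipschitz regularity of $\cK$ in $m$ that follows from Proposition \ref{prop: C^2,1}.

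First, I would establish a quadratic Taylor expansion for $\Phi$. For $(t_0, m_0^1) \in \cO$ and $m_0^2$ with $\bd_2(m_0^2, m_0^1)$ small, let $(u^i, m^i)$ denote the \ref{system: MFGSystem} solutions and let $(\tilde z, \tilde\rho)$ be the unique solution of \ref{system: LinearizedSystemNoError} driven by $(u^1, m^1)$ with initial condition $\tilde\rho(t_0) = m_0^2 - m_0^1$. The pair $(w, \lambda) = (u^2 - u^1 - \tilde z,\, m^2 - m^1 - \tilde\rho)$ solves \ref{system: LinearizedSystemWithError} driven by $(u^1, m^1)$ with vanishing initial condition and with forcing terms $R^1, R^2, R^3$ given by the second-order Taylor remainders of $H$, $D_pH$, $F$, and $G$ evaluated along the segment between $(u^1, m^1)$ and $(u^2, m^2)$. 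Using Lemma \ref{lem: MainEstimatesFOrDifferenceOfSOlutions} to bound $\|u^2 - u^1\|_{C^{(2+\delta)/2, 2+\delta}}$ and the relevant time-H\"older norm of $m^2 - m^1$ by $C\bd_1(m_0^2, m_0^1)$, a computation analogous to (but one order higher than) the one in the proof of that lemma shows that each $R^i$ is controlled quadratically in $\bd_1(m_0^2, m_0^1)$ in the norms required by Lemma \ref{lem: MainEstimatesLinearizedSystem}. That lemma then delivers
\[\|u^2 - u^1 - \tilde z\|_{C^{(2+\delta)/2, 2+\delta}} \leq C\bd_1^2(m_0^2, m_0^1).\]
Evaluating at $t = t_0$ and using \eqref{eq: RepresentationSecondDerivative}, which gives $\tilde z(t_0, x) = \langle K(t_0, m_0^1, x, \cdot),\, m_0^2 - m_0^1\rangle$, yields
\[\Bigl\|\Phi(t_0, m_0^2, \cdot) - \Phi(t_0, m_0^1, \cdot) - \int_{\R^d} K(t_0, m_0^1, \cdot, y)(m_0^2 - m_0^1)(dy)\Bigr\|_{C^{2+\delta}} \leq C\bd_1^2(m_0^2, m_0^1).\]

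Combined with the continuity of $K$ in $(m, y)$ from Propositions \ref{prop.kreg} and \ref{prop: C^2,1}, the previous estimate identifies $\delta\Phi/\delta m(t_0, m_0, x, y) = K(t_0, m_0, x, y)$ in the sense of linear derivatives on $\cO$. Differentiating \eqref{eq: FormulaForDelta1U} in $m$ via the product rule for $m \mapsto \int \Phi(t, m, z) m(dz)$, then normalizing so that $\int \delta^2\cU/\delta m^2(t_0, m_0, x, y) m_0(dy) = 0$, produces exactly the expression \eqref{eq: DefinitionOfNormalizedK} for $\cK$. Joint continuity of $\cK$ in $(t, m, x, y)$ follows from the continuity of $K$, $\Phi$, and the integrals against $m_0$, which completes the $\cC^2$ claim. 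For the quantitative estimate, the fundamental theorem of calculus along $m_0^\lambda = (1-\lambda) m_0^1 + \lambda m_0^2$ gives
\[\frac{\delta\cU}{\delta m}(t_0, m_0^2, \cdot) - \frac{\delta\cU}{\delta m}(t_0, m_0^1, \cdot) - \int \cK(t_0, m_0^1, \cdot, y)(m_0^2 - m_0^1)(dy) = \int_0^1 \!\!\int \bigl[\cK(t_0, m_0^\lambda, \cdot, y) - \cK(t_0, m_0^1, \cdot, y)\bigr](m_0^2 - m_0^1)(dy)\, d\lambda.\]
For each $\lambda$, Proposition \ref{prop: C^2,1} applied to $K^{(e_i)}$ (together with \eqref{eq: DifferentiabilityOfSecondDerivative}) and Lemma \ref{lem: MainEstimatesFOrDifferenceOfSOlutions} applied to the $\Phi$-terms in \eqref{eq: DefinitionOfNormalizedK} show that $y \mapsto \cK(t_0, m_0^\lambda, \cdot, y) - \cK(t_0, m_0^1, \cdot, y)$ is $C^{2+\delta}(\R^d_x)$-valued Lipschitz in $y$ with constant bounded by $C\bd_1(m_0^\lambda, m_0^1) = C\lambda\bd_1(m_0^2, m_0^1)$. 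Kantorovich-Rubinstein duality, applied coordinate-wise in the spatial derivatives up to order $2+\delta$, then produces a $C^{2+\delta}$-norm bound of $C\lambda\bd_1^2(m_0^2, m_0^1)$ on the inner integral, and integrating in $\lambda$ yields the claimed estimate.

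The main obstacle is Step 1: the quadratic bound on the forcing terms $R^1, R^2, R^3$ in the system satisfied by $(u^2 - u^1 - \tilde z, m^2 - m^1 - \tilde\rho)$. This is the higher-order analogue of the delicate computation of $\|R^1\|_{C^{\delta/2, \delta}}$ in the proof of Lemma \ref{lem: MainEstimatesFOrDifferenceOfSOlutions}, and relies crucially on the $\delta/2$ time-H\"older estimate for $m^2 - m^1$ in $(C^{2+\delta})'$ provided by that lemma, together with the $C^{2+\delta}$ estimate on $u^2 - u^1$. Once this remainder estimate is established, the identification of $\delta^2\cU/\delta m^2$ with $\cK$ and the Kantorovich-Rubinstein bound in the final step are essentially bookkeeping based on already proven regularity of $K^{(l)}$.
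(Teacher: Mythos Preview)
Your approach is correct, and Step~1 is identical to the paper's: the pair $(u^2 - u^1 - \tilde z,\, m^2 - m^1 - \tilde\rho)$ solves \ref{system: LinearizedSystemWithError} with zero initial datum and second-order remainder forcings, and Lemmas~\ref{lem: MainEstimatesLinearizedSystem} and~\ref{lem: MainEstimatesFOrDifferenceOfSOlutions} deliver $\|u^2 - u^1 - \tilde z\|_{C^{(2+\delta)/2,2+\delta}} \leq C\bd_1^2$. You correctly identify this as the main obstacle.

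Where you diverge is in how the quantitative estimate for $\delta\cU/\delta m$ is extracted. The paper is terser: it simply says the result follows from the representations \eqref{eq: FormulaForDelta1U}, \eqref{eq: DefinitionOfNormalizedK}, \eqref{eq: RepresentationSecondDerivative}, meaning one substitutes the quadratic $\Phi$-expansion directly into \eqref{eq: FormulaForDelta1U} and compares with \eqref{eq: DefinitionOfNormalizedK}. The only extra term appearing in this algebra is the cross-term $\int\!\!\int K(t_0,m_0^1,z,y)(m_0^2-m_0^1)(dy)(m_0^2-m_0^1)(dz)$, which is $O(\bd_1^2)$ by the joint $(z,y)$-Lipschitz regularity of $K$ from Proposition~\ref{prop.kreg}. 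Your Step~3 instead integrates along the segment $m_0^\lambda$ and bounds the remainder via the $\bd_1$-Lipschitz regularity of $\cK$ in $m$ plus Kantorovich--Rubinstein. This is valid (the $C^{2+\delta}$ control on $K^{(e_i)}(m_0^\lambda,\cdot,y)-K^{(e_i)}(m_0^1,\cdot,y)$ from Proposition~\ref{prop: C^2,1} does carry the H\"older seminorm of second spatial derivatives, so the coordinate-wise argument goes through), but it is somewhat redundant: Step~1 already contains the full quantitative estimate once the formulas are unpacked, and your route requires Proposition~\ref{prop: C^2,1} as an additional input that the paper's direct substitution does not need.
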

\begin{proof}
    Let $(u^1,m^1)$ and $(u^2,m^2)$ be the unique stable solutions to \eqref{system: MFGSystem} with initial conditions $m^1(t_0)=m_0^1,m^2(t_0)=m_0^2$ respectively, and $(z,\mu)$ be the solution to \eqref{system: LinearizedSystemNoError} driven by $(u,m)$ with initial condition $\rho(t_0)=m_0^2-m_0^1$. 
    
    \vs
    The pair
    \[(z,\rho)=(u^2-u^1-z,m^2-m^1-\mu)\]
    solves the linearized system \eqref{system: LinearizedSystemWithError} driven by $(u^1,m^1)$, with $\xi=0$ and 
    \begin{align*}
        R^1 &=
            -(H(x,Du^2)-H(x,Du^1)-D_pH(x,Du^1)\cdot D(u^2-u^1))\\
            &\qquad +F(x,m^2)-F(x,m^1)-\frac{\delta F}{\delta m}(x,m^1(t))(m^2(t)-m^1(t)), \\
        R^2 &= 
        (D_pH(x,Du^2)-D_pH(x,Du^1)-D_{pp}H(x,Du^1)\cdot(Du^2-Du^1))m^1\\
        &\qquad +(D_pH(x,Du^2)-D_pH(x,Du^1))(m^2-m^1), \\
        R^3 &= G(x,m^2(T))-G(x,m^1(T))-\frac{\delta G}{\delta m}(x,m^1(T))(m^2(T)-m^1(T)).
    \end{align*}
    Arguments similar to those used to bound $R^1$ in the proof of \ref{lem: MainEstimatesFOrDifferenceOfSOlutions} yield
    
    \begin{align*}
        &\|R^1\|_{C^{\delta/2,\delta}}+\sup\limits_{t\in [t_0,T]}\|R^2\|_{(W^{1,\infty})'}+\|R^3\|_{(C^{2+\delta})'} \\
        &\quad \leq C\Big(\|Du^2-Du^1\|_{C^{\delta/2,\delta}}^2+\sup\limits_{t\in [t_0,T]}\bd_1^2(m^2(t),m^1(t))+\Big(\sup\limits_{t\neq s}\frac{\|(m^2(t)-m^1(t))-(m^2(s)-m^1(s))\|}{|t-s|^{\delta/2}}\Big)^2 \Big),
    \end{align*}
    and thus Lemma~\ref{lem: MainEstimatesLinearizedSystem} and Lemma~\ref{lem: MainEstimatesFOrDifferenceOfSOlutions} imply that  
    \[\|u^2-u^1-z\|_{C^{(2+\delta)/2,2+\delta}}\leq C\bd_1^2(m_0^2,m_0^1).\]
    The above shows that
    \[\frac{\delta \Phi}{\delta m}(t_0,m_0^1,x,y) = K(t_0,m_0,x,y).\]
    The result now follows from the representations \eqref{eq: DefinitionOfNormalizedK}, \eqref{eq: FormulaForDelta1U} and \eqref{eq: RepresentationSecondDerivative}.

\end{proof}

We may now show Theorem \ref{thm.c2}.
\begin{proof}[The proof of Theorem \ref{thm.c2}]
    The claim  follows from Proposition \ref{prop: C^2,1} and Theorem \ref{thm: C^2Theorem}.

\end{proof}
We conclude this section by showing that the function $D_m\mathcal U$ is Lipschitz continuous with respect to time.
\begin{prop} \label{prop.timereg}
    Let Assumption \ref{assump:gradients} hold. Given $(t_0,m_0)\in \mathcal{O}$, there exist $C>0$ and $\delta>0$ depending on the data such that, for all $|h|<\delta $,
    \begin{equation*}
        \Big\|D_m\cU ((t_0+h)\wedge T,m_0,\cdot)-D_m\cU(t_0,m_0,\cdot)\Big\|_{\linf} \leq C|h|.
    \end{equation*}
\end{prop}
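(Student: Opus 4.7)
The plan is to exploit that $\cU$ is a classical solution of \eqref{eq:hjbinf} on $\cO$ and to combine the $\cC^2$-regularity in $m$ from Theorem \ref{thm.c2} with the regularity of the linearized kernel from Proposition \ref{prop.kreg}. Because $\cO$ is open and contains $(t_0,m_0)$, there exists $\eta>0$ such that $(s,m)\in\cO$ whenever $|s-t_0|<\eta$ and $\bd_2(m,m_0)<\eta$. On this neighborhood, setting
\[
G(s,m) := -\cF(m) - \int_{\R^d}\tr\bigl(D_x D_m\cU(s,m,x)\bigr)\, m(dx) + \int_{\R^d} H(x, D_m\cU(s,m,x))\, m(dx),
\]
the equation $\partial_s\cU(s,m) = G(s,m)$ holds classically, and hence for $|h|$ small enough and $\bd_2(m,m_0)<\eta/2$,
\begin{equation}\label{eq:plan-integral}
    \cU((t_0+h)\wedge T, m) - \cU(t_0, m) = \int_{t_0}^{(t_0+h)\wedge T} G(s, m)\, ds.
\end{equation}

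Next I will show that $m\mapsto G(s,m)$ is of class $\cC^1$ in the Wasserstein sense, uniformly in $s$ on this neighborhood, with Wasserstein derivative
\begin{align*}
    D_m G(s,m)(y) &= -D_m\cF(m,y) + H(y, D_m\cU(s,m,y)) - \tr\bigl(D_x D_m\cU(s,m,y)\bigr)\\
    &\quad + \int_{\R^d} D_p H(x, D_m\cU(s,m,x))\cdot D_{mm}\cU(s,m,x,y)\, m(dx)\\
    &\quad - \int_{\R^d} \tr\bigl(D_x D_{mm}\cU(s,m,x,y)\bigr)\, m(dx).
\end{align*}
All terms but the last are immediately bounded using the local bounds on $D_m\cU$ and $D_{mm}\cU$ on $\cO$ provided by Proposition \ref{prop.dmlip} and Theorem \ref{thm.c2}, together with the regularity of $H$ and $\cF$ from Assumption \ref{assump:gradients}. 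For the last term I use the identification $D_{mm}\cU(s,m,x,y) = D_x D_y\cK(s,m,x,y)$ and invoke Proposition \ref{prop.kreg}: the latter guarantees that $D_y\cK = K^{(e_i)}$ belongs to $C^{2+\delta}$ in the variable $x$, uniformly in $y$ and locally in $(s,m)$, so that $D_x D_{mm}\cU = D_{xx} D_y\cK$ is bounded.

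Finally, I differentiate \eqref{eq:plan-integral} with respect to $m$ at $m=m_0$, interchanging the linear derivative with the time integral (which is justified by the uniform bound on $D_m G$ obtained above), and apply $D_y$ to conclude
\[
D_m\cU((t_0+h)\wedge T, m_0, y) - D_m\cU(t_0, m_0, y) = \int_{t_0}^{(t_0+h)\wedge T} D_m G(s, m_0)(y)\, ds.
\]
Taking the supremum over $y\in\R^d$ and using the uniform bound on $D_m G$ yields the desired estimate. The principal obstacle is justifying the explicit formula for $D_m G(s,m)(y)$, in particular the appearance of the term involving $D_x D_{mm}\cU$, which requires the ``third order'' spatial regularity $D_{xx} D_y\cK\in L^\infty$ that Proposition \ref{prop.kreg} furnishes.
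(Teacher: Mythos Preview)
Your approach is valid and leads to a correct proof, but it is genuinely different from the paper's argument. The paper does \emph{not} differentiate the Hamilton--Jacobi equation in $m$. Instead it inserts the optimal trajectory $s\mapsto m_s^{(t_0,m_0)}$ and uses the key identity $D_m\cU(s,m_s^{(t_0,m_0)},x)=D_xu^{(t_0,m_0)}(s,x)$. This splits the increment into a ``diagonal'' piece $|D_xu(t_0+h,x)-D_xu(t_0,x)|\le Ch$ (from $u\in C^{(3+\delta)/2,3+\delta}$, Lemma~\ref{lem: EstimatesOnMFG}) and an ``off-diagonal'' correction $|D_m\cU(t_0+h,m_0,x)-D_m\cU(t_0+h,m_{t_0+h},x)|$, which is handled by showing that $D_m\cU$ is locally Lipschitz in the $(C^2)'$-norm (from the $C^{1+\delta}$-in-$y$ bound on $D_{mm}\cU$ together with the symmetry $D_{mm}\cU(t,m,x,y)=D_{mm}\cU(t,m,y,x)^T$) and that $s\mapsto m_s$ is Lipschitz into $(C^2)'$. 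Your PDE route is more direct, but it requires one more order of regularity: you need $D_xD_{mm}\cU$ bounded, i.e.\ $D_{xx}D_yK\in L^\infty$, whereas the paper only uses $D_{mm}\cU\in C^{1+\delta}$ in the extra variable. Proposition~\ref{prop.kreg} does supply this stronger bound, so your argument goes through; the paper's characteristic-based decomposition simply avoids having to invoke it.

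One small correction: your displayed formula for $D_mG(s,m)(y)$ is not quite right. The two ``boundary'' terms $H(y,D_m\cU(s,m,y))$ and $-\tr(D_xD_m\cU(s,m,y))$ are the contributions to the \emph{linear} derivative $\frac{\delta G}{\delta m}(s,m,y)$; to obtain $D_mG$ you must still apply $D_y$ to them, producing $D_xH(y,D_m\cU)+D_pH(y,D_m\cU)\cdot D_x D_m\cU(s,m,y)$ and $-D_x\tr(D_xD_m\cU(s,m,y))$ respectively. (You did apply $D_y$ to the first term and to the two integral terms, so the formula is inconsistent as written.) This slip is harmless for the conclusion: both corrected terms are uniformly bounded on the neighborhood, thanks to $u^{(s,m)}\in C^{(3+\delta)/2,3+\delta}$ from Lemma~\ref{lem: EstimatesOnMFG}, so your bound on $\sup_y|D_mG(s,m_0)(y)|$ still holds.
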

\begin{proof}
Fix $(t_0,m_0) \in \cO$ and $x \in \R^d$.
\vs

We write %We break the problem up by noting that 
\begin{align*}
    |D_m \cU(t_0 & + h,m_0 x) - D_m \cU(t_0, m_0,x)| \leq I + II, 
    \end{align*}
    with 
    \begin{align*}
    I = |D_m \cU(t_0 + h, m_{t_0}, x) - D_m \cU(t_0 +h, m_{t_0 + h}, x)|, 
    \end{align*}
    and 
    \begin{align*}
    II  = |D_m \cU(t_0 +h, m_{t_0 + h}, x) - D_m \cU(t_0 + h, m_{t_0}, x)|,
\end{align*}
where $t \mapsto m_t$ is the unique optimal trajectory started from $(t_0,m_0)$, and proceed obtaining bounds for $I$ and $II$.
\vs
To estimate $II$, we note that the regularity of $u^{(t_0,m_0)}$ yields 
\begin{align*}
    II = |D_x u^{(t_0,m_0)}(t_0 + h,x) - D_x u^{(t_0,m_0)}(t_0,x)| \leq Ch. 
\end{align*}
%using the regularity of $u^{(t_0,m_0)}$. 
For the term $I$, we first note that,  in view of  Theorem \ref{thm: C^2Theorem} and the regularity of $K$ proved in Proposition \ref{prop.kreg}, we can find $\delta$ small enough that, if $(t,m)$ is such that  $|t - t_0| < \delta$ and $\bd_2(m,m_0) < \delta$, then, for some $C > 0$, 
\begin{align*}
   \| D_{mm} \cU(t,m,\cdot,y) \|_{C^{1 + \delta}} \leq C. 
\end{align*}
Since $D_{mm} \cU(t,m,x,y) = D_{mm} \cU(t,m,y,x)^T$ by Corollary 5.89 in \cite{CarmonaDelarue_book_I}, it follows that $$\|D_{mm} \cU(t,m,x,\cdot)\|_{C^{1 + \delta}} \leq C.$$
From here it is straightforward to check that there is a constant $C$ such that,  if  $|t - t_0| < \delta$ and $\bd_2(m,m_0) < \delta$, we have 
\begin{align*}
    \| \frac{\delta }{\delta m} \big[ D_m\cU(t,m,x) \big](\cdot) \|_{C^2} \leq C.
\end{align*}
It follows that $D_m \cU$ is locally Lipschitz in  $(C^2)'$, and, in particular, for $|t - t_0| < \delta$, $\bd_2(m,m_0) < \delta$, we find  
\begin{align*}
    |D_m \cU(t,m,x) - D_m \cU(t,m_0,x)| \leq C \|m - m_0\|_{(C^2)'}.
\end{align*}
Note also that standard estimates yield that  $t \mapsto m_t$ is Lipschitz with respect to the $(C^2)'$-metric. 
Thus, choosing, if necessary, $\delta$ even smaller  so that $\bd_2(m_{t_0 + h}, m_{t_0})$ is small enough, we find that 
\begin{align*}
    I \leq C \|m_{t_0 + h} - m_{t_0}\|_{(C^2)'} \leq Ch,
\end{align*}
which completes the proof. 
\end{proof}

\bibliographystyle{plain}
\bibliography{CJMSreferences}

\end{document}